\documentclass[11pt, english]{article}

\usepackage[margin= 21mm,bottom=21mm, top= 20mm]{geometry}

\usepackage{amsthm}
\usepackage{amsmath}
\usepackage{amssymb}
\usepackage{setspace}
\usepackage{mathtools}
\usepackage{graphicx}
\graphicspath{ {./images/} }
\usepackage[hidelinks]{hyperref}
\usepackage{cleveref}
\usepackage{hyperref}
\usepackage{enumitem}
\usepackage{framed}
\usepackage[width=.9\textwidth]{caption}
\usepackage{xspace}
\usepackage{thmtools} 
\usepackage{thm-restate}
\usepackage{comment}

\usepackage{thmtools}
\usepackage{thm-restate}

\usepackage{floatrow}

\usepackage{tikz}
\usepackage{mathdots}
\usepackage{xcolor}
\usepackage{diagbox}
\usepackage{colortbl}
\usepackage[absolute,overlay]{textpos}

\usetikzlibrary{calc}
\usetikzlibrary{decorations.pathreplacing}
\usetikzlibrary{positioning,patterns}
\usetikzlibrary{arrows,shapes,positioning}
\usetikzlibrary{decorations.markings}

\tikzstyle{edge}=[very thick]
\definecolor{bostonuniversityred}{rgb}{0.8, 0.0, 0.0}
\definecolor{arsenic}{rgb}{0.23, 0.27, 0.29}
\tikzstyle{diredge}=[postaction={decorate,decoration={markings,
		mark=at position .56 with {\arrow[scale = 1.3, black]{stealth};}}}]
\tikzset{
    arrow/.style={decoration={markings, mark=at position 0.5 with
    {\fill(-0.18*#1,-0.06*#1) -- (0,0) -- (-0.18*#1,0.06*#1) -- cycle;}, black}, postaction={decorate}},
    arrow/.default=1
}
\tikzset{
    arow/.style={decoration={markings, mark=at position 1 with
    {\fill(-0.09*#1,-0.03*#1) -- (0,0) -- (-0.09*#1,0.03*#1) -- cycle;}}, postaction={decorate}},
    arow/.default=1
}
\tikzset{
    arrrow/.style={decoration={markings, mark=at position 0.9 with
    {\fill(-0.09*#1,-0.03*#1) -- (0,0) -- (-0.09*#1,0.03*#1) -- cycle;}}, postaction={decorate}},
    arow/.default=1
}

\newcommand{\fitellipsis}[2] 
{\draw [fill=white]let \p1=(#1), \p2=(#2), \n1={atan2(\y2-\y1,\x2-\x1)}, \n2={veclen(\y2-\y1,\x2-\x1)}
    in ($ (\p1)!0.5!(\p2) $) ellipse [ x radius=\n2/2+0cm, y radius=1.1cm, rotate=\n1];
}
\newcommand{\Fitellipsis}[2] 
{\draw [fill=white]let \p1=(#1), \p2=(#2), \n1={atan2(\y2-\y1,\x2-\x1)}, \n2={veclen(\y2-\y1,\x2-\x1)}
    in ($ (\p1)!0.5!(\p2) $) ellipse [ x radius=\n2/2+0cm, y radius=1.4cm, rotate=\n1];
}

\floatsetup{ 
  heightadjust=object,
  valign=c
}

\setlength{\parskip}{0pt}
\setlength{\parindent}{15pt}

\addtolength{\intextsep}{6pt} 
\addtolength{\abovecaptionskip}{10pt}
\addtolength{\belowcaptionskip}{-5pt}
\captionsetup{width=0.8\textwidth, labelfont=bf, parskip=5pt}

\setstretch{1.0}

\theoremstyle{plain}

\newtheorem*{thm*}{Theorem}
\newtheorem{thm}{Theorem}[section]
\Crefname{thm}{Theorem}{Theorems}

\newtheorem*{lem*}{Lemma}
\newtheorem{lem}[thm]{Lemma}
\Crefname{lem}{Lemma}{Lemmas}

\newtheorem*{claim*}{Claim}
\newtheorem{claim}{Claim}[]
\crefname{claim}{Claim}{Claims}
\Crefname{claim}{Claim}{Claims}

\Crefname{prop}{Proposition}{Propositions}

\Crefname{remar}{Remark}{Remarks}

\crefname{cor}{Corollary}{Corollaries}

\newtheorem*{conj*}{Conjecture}
\newtheorem{conj}[thm]{Conjecture}
\crefname{conj}{Conjecture}{Conjectures}

\Crefname{qn}{Question}{Questions}

\newtheorem*{obs*}{Observation}
\newtheorem{obs}[thm]{Observation}
\Crefname{obs}{Observation}{Observations}

\Crefname{ex}{Example}{Examples}

\theoremstyle{definition}

\Crefname{prob}{Problem}{Problems}

\newtheorem{defn}[thm]{Definition}
\Crefname{defn}{Definition}{Definitions}

\theoremstyle{remark}

\captionsetup{width=0.8\textwidth, labelfont=bf, parskip=5pt}

\renewenvironment{proof}[1][]{\begin{trivlist}
\item[\hspace{\labelsep}{\bf\noindent Proof#1.\/}] }{\qed\end{trivlist}}

\newcommand{\remove}[1]{}

\newcommand{\F}{\mathcal{F}}
\newcommand{\HH}{\mathcal{H}}
\newcommand{\inter}{\mathrm{int}}
\newcommand{\seg}{\mathrm{seg}}
\newcommand{\lex}{\mathrm{lex}}
\newcommand{\End}{\mathrm{End}}

\title{\vspace{-1 cm}
Hamiltonicity of expanders: optimal bounds and applications}
\date{}

\author{
Nemanja Dragani\'c\thanks{
Mathematical Institute, University of Oxford, UK. Research supported by SNSF project 217926.\\
\emph{Email}: \textbf{nemanja.draganic@maths.ox.ac.uk}}
\and 
Richard Montgomery \thanks{Mathematics Institute, University of Warwick, Coventry, CV4 7AL, UK. Research supported
by the European Research Council (ERC) under the European Union Horizon 2020 research and innovation programme (grant
agreement No. 947978). \emph{Email}: \textbf{richard.montgomery@warwick.ac.uk.}}
\and 
David Munh\'a Correia\thanks{
Department of Mathematics, ETH, Z\"urich, Switzerland. Research supported in part by SNSF grant 200021\_196965.
\newline
\emph{Emails}: \textbf{\{david.munhacanascorreia, benjamin.sudakov\}@math.ethz.ch}.
}
\and
Alexey Pokrovskiy\thanks{Department of Mathematics, University College London, Gower Street, London WC1E 6BT, UK. \\ \emph{Email}: \textbf{dralexeypokrovskiy@gmail.com.}}
\and
Benny Sudakov\footnotemark[3]}

\begin{document} 
\maketitle
\begin{abstract}
An $n$-vertex graph $G$ is a $C$-expander if $|N(X)|\geq C|X|$ for every $X\subseteq V(G)$ with $|X|< n/2C$ and there is an edge between every two disjoint sets of at least $n/2C$ vertices.
We show that there is some constant $C>0$ for which every $C$-expander is Hamiltonian. In particular, this implies the well known conjecture of Krivelevich and Sudakov from 2003 on Hamilton cycles in $(n,d,\lambda)$-graphs. This completes a long line of research on the Hamiltonicity of sparse graphs, and has many applications.
\end{abstract}
\section{Introduction}

A Hamilton cycle in a graph $G$ is a cycle that contains all the vertices of $G$. The presence of such a cycle categorizes $G$ as Hamiltonian. This fundamental concept in Graph Theory has been extensively studied, for example see~\cite{ajtai1985first,bollobas1987algorithm,chvatal1972note,MR3545109,cuckler2009hamiltonian,ferber2018counting,hefetz2009hamilton,krivelevich2011critical,krivelevich2014robust,kuhn2014proof,kuhn2013hamilton, posa:76} and the surveys~\cite{gould2014recent, MR3727617}. Deciding whether a graph is Hamiltonian or not is an NP-complete problem, and thus it is an important area of research to find simple conditions which imply Hamiltonicity. One classic example is Dirac's theorem \cite{dirac1952some} that any graph with $n\geq 3$ vertices and minimum degree at least $n/2$ is Hamiltonian. Another beautiful condition, by Chv\'atal and Erd\H{o}s \cite{chvatal1972note}, is that if the connectivity of a graph is at least its independence number, then the graph is Hamiltonian. Other famous Hamiltonicity conditions include those of Chv\'atal \cite{chvatal1972hamilton}, Jackson \cite{jackson1980hamilton} and Nash-Williams \cite{nash1971edge}. Most known conditions for Hamiltonicity, however, require the graph to be very dense. All the results mentioned above imply that the graph has linear minimum degree, except the Chv\'atal-Erd\H{o}s result, which still implies minimum degree $\Omega(\sqrt{n})$. Hence, it is of particular interest to find Hamiltonicity conditions which also apply to sparse graphs.

A key area of research towards this in the last 50 years has been on Hamiltonicity in sparse random graphs, and, in particular, which binomial random graphs $G(n,p)$
and which random regular graphs $G_{n,d}$ are likely to be Hamiltonian. P\'osa~\cite{posa:76} was the first to determine the threshold for Hamiltonicity in $G(n,p)$, introducing the famous rotation-extension technique which quickly become a widely used tool with innumerable applications (including in this paper). After a refinement of P\'osa's result by Korshunov \cite{korshunov}, in 1983 Bollobás \cite{bollobas1984evolution} and Komlós and Szemerédi~\cite{komlos1983limit}
independently showed that, if $p = (\log n + \log \log n + \omega(1))/n$, then $G(n, p)$
is almost certainly Hamiltonian. As is well-known, when $p = (\log n + \log \log n - \omega(1))/n$, $G(n,p)$ almost certainly has a vertex with degree 1, and hence no Hamilton cycle.
Random regular graphs, then, may be far sparser yet plausibly Hamiltonian with high probability, and, after significant focus on the problem, it is now known that $G_{n,d}$ will almost surely have a Hamilton cycle for all $3 \leq d \leq n - 1$. For further details on this, see the work of Cooper, Frieze, and Reed \cite{cooper2002random} and Krivelevich, Sudakov, Vu, and Wormald \cite{krivelevich2001random}.

The well-established understanding of Hamiltonicity in random graphs presents an important step towards the search for simple properties of sparse graphs which imply Hamiltonicity. It points to considering natural `pseudorandom' conditions which are required by a deterministic graph to resemble a random graph. However, forgoing the randomness of $G(n,p)$ and relying only on these pseudorandom properties to find a Hamilton cycle presents a significantly firmer challenge, similar to the generalisation of other problems from random to pseudorandom graphs.
Pseudorandom graphs have been systematically studied since work by Thomason~\cite{thomason1987pseudo,thomason1987random} in the 1980's, a history that can be found in the survey by Krivelevich and Sudakov~\cite{krivelevich2006pseudo}. The most studied class of pseudorandom \emph{regular} graphs was introduced by Alon and is defined using spectral properties. Recalling that if a graph $G$ is $d$-regular then its largest eigenvalue is $d$, we denote the second largest eigenvalue of $G$ in absolute value by $\lambda(G)$. Then, a graph $G$ is an \emph{$(n,d,\lambda)$-graph} if it is $d$-regular with $n$ vertices and satisfies $|\lambda(G)|\leq \lambda$.

The first major step towards understanding the Hamiltonicity of such pseudorandom graphs was made by Krivelevich and Sudakov in 2003 in their influential paper \cite{KS:03}. They showed that if $d$ is sufficiently larger than $\lambda$, then the graph is Hamiltonian. More precisely,
\begin{equation}\label{eqn:KSbound}
\frac{d}{\lambda} \geq 1000 \cdot  \frac{\log n \cdot (\log \log \log n)}{ (\log \log n)^2}
\end{equation}
implies that every $(n,d,\lambda)$-graph is Hamiltonian. In the same paper, Krivelevich and Sudakov made the beautiful conjecture that \eqref{eqn:KSbound} can be replaced by $\frac{d}{\lambda}\geq C$ for some large constant $C$, as follows.

\begin{conj}\label{conj:ks}
    There exists $C>0$ such that if $\frac{d}{\lambda}\geq C$, then every $(n,d,\lambda)$-graph is Hamiltonian.
\end{conj}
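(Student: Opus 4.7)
The plan is to derive \Cref{conj:ks} from the more general theorem announced in the abstract---that every $C$-expander is Hamiltonian for some absolute constant $C$---via a standard pseudorandomness reduction using the Expander Mixing Lemma (EML); the bulk of the work then lies in proving that expander theorem itself.

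For the reduction, let $C_0$ be the constant supplied by the expander theorem, and take $C := 4C_0$ in \Cref{conj:ks}. To show that an $(n,d,\lambda)$-graph $G$ with $d/\lambda \geq C$ is a $C_0$-expander, the ``dense'' edge condition follows at once from EML: for any two disjoint $X,Y$ of size at least $n/(2C_0)$,
$$
e(X,Y) \geq \frac{d|X||Y|}{n} - \lambda\sqrt{|X||Y|} = \sqrt{|X||Y|}\left(\frac{d\sqrt{|X||Y|}}{n} - \lambda\right) > 0,
$$
since $d\sqrt{|X||Y|}/n \geq d/(2C_0) > \lambda$. For the expansion condition, assume for contradiction that $|N(X)| < C_0|X|$ for some $X$ of size $k < n/(2C_0)$; set $Y = V(G)\setminus(X\cup N(X))$, so $e(X,Y)=0$, and apply EML to get $|Y| \leq \lambda^2 n^2/(d^2k)$. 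Combined with $|Y| \geq n - (C_0+1)k$ this contradicts $d/\lambda \geq C$ outside a very small range of $k$; the remaining small-$k$ regime is dispatched by the EML bound $e(X) \leq dk^2/(2n) + \lambda k/2$ on internal edges, which forces $\sim dk$ edges to leave $X$ and hence $|N(X)| \geq C_0 k$.

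For the expander theorem itself, the natural strategy is the absorbing method, adapted to the expansion hypothesis. I would proceed as follows. \emph{(i)} Build an \emph{absorbing path} $A\subseteq G$ with prescribed endpoints such that, for every sufficiently small $U\subseteq V(G)\setminus V(A)$, there exists a path in $G$ with the same endpoints as $A$ spanning exactly $V(A)\cup U$. \emph{(ii)} Using expansion, extend $A$ through $G-V(A)$ to a path $P$ covering all but a small reservoir $R$, via an iterative P\'osa rotation--extension argument that exploits the $C_0$-expansion to produce linearly many possible endpoints at each stage. \emph{(iii)} Close $P$ into a cycle using the dense edge condition, and then invoke the absorbing property of $A$ to swallow $R$, producing a Hamilton cycle.

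The main obstacle is step \emph{(i)}: with only constant-factor expansion, the classical absorbing gadgets---which typically require polylogarithmic expansion or strong quasirandomness to embed---cannot be constructed in the usual way. I expect the proof to overcome this by a hierarchical or iterative scheme, in which individual ``absorbers'' for single vertices are produced from short paths obtained by performing P\'osa rotations inside carefully chosen subgraphs, and these absorbers are then concatenated into a single path $A$ using the dense edge condition to provide the connecting edges. A secondary difficulty, which must be handled in parallel, is that after freezing the linear-size set $V(A)$ the residual graph $G-V(A)$ must still expand enough for step \emph{(ii)} to succeed; ensuring this robustness under vertex deletion---when $C_0$ is a fixed absolute constant rather than a parameter one can take large---is the delicate part of the argument.
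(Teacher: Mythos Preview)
Your reduction from the expander theorem to \Cref{conj:ks} via the Expander Mixing Lemma is correct and is exactly what the paper does (it calls this ``a standard exercise'').

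Your proposed proof of the expander theorem itself, however, has genuine gaps and diverges substantially from the paper's argument. The two obstacles you flag are not cosmetic; they are precisely where all prior approaches stalled, and your proposal does not actually overcome them. In step~(ii), the classical P\'osa rotation--extension on a single path does \emph{not} suffice under constant-factor expansion: as the paper explains in its proof outline, rotating around the two endpoints of a long path \emph{interferes}---rotations at one end can destroy the structure created at the other---so one cannot produce two linear-sized, independently-chosen endpoint sets $A\times B$ to which condition~(b) can be applied. This is why the best result via that route was the $\log^{1-o(1)}n$-expansion bound of Hefetz--Krivelevich--Szab\'o. In step~(i), your ``hierarchical or iterative scheme'' for absorbers is speculation rather than a mechanism; nothing in the proposal explains how to build a flexible absorbing gadget when all you can guarantee is an edge between any two sets of size $n/2C$.

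The paper's route is quite different from absorption-plus-P\'osa. It replaces single-path rotations by \emph{rotations of linear forests}: starting from a spanning linear forest, it repeatedly splits the forest into two disjoint sub-forests and rotates each independently, so the two endpoint sets are genuinely decoupled and condition~(b) applies. An intricate extremal argument (via a lexicographic minimality order on forests) keeps the path lengths balanced so this splitting remains possible, driving the number of paths down to $n^{0.8}$. In place of an absorbing path, the paper embeds an $(A,B)$-\emph{linking structure} with $|A|=|B|=n^{0.9}$---built from a sorting network, following Hyde--Morrison--M\"uyesser--Pavez-Sign\'e---using the Friedman--Pippenger/rollback extendability method, which \emph{does} work under constant expansion because the structure is path-constructible with paths of length $\Theta(\log n)$. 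The forest rotations are then used to make the endpoints of the remaining linear forest coincide exactly with $A\cup B$, and the linking structure stitches everything into a Hamilton cycle. So the role you assign to an absorbing path is played instead by a sorting-network gadget, and the role of long-path P\'osa rotation is played by linear-forest rotation.
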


\noindent That is, if the absolute value of every other eigenvalue of a regular graph is at most a small constant fraction of the largest eigenvalue, then the graph should be Hamiltonian.

Considering the result in~\cite{KS:03} in the context of random graphs allows us to benchmark, broadly, this progress. That is to say, the result of~\cite{KS:03} is strong enough to prove the likely Hamiltonicity of the random regular graph $G_{n,d}$ when $d\geq \log^{2-o(1)}n$, while the ultimate goal, Conjecture~\ref{conj:ks}, would be  strong enough to prove the likely Hamiltonicity of the random regular graph $G_{n,d}$ when  $d$ is at least some large constant.
Despite a great deal of attention, for example seen by the various relaxations and generalisations of the problem studied in \cite{allen2017powers,brandt2006global,hefetz2009hamilton,krivelevich2006pseudo,krivelevich2011number}, and many incentivising applications (see Section~\ref{sec:applications}), the bound at \eqref{eqn:KSbound} established in \cite{KS:03} remained unchallenged for 20 years. Only recently,  Glock, Munh\'a Correia and Sudakov~\cite{glock2023hamilton} finally improved this, significantly strengthening the result of~\cite{KS:03} by showing that, for some large constant $C>0$, $d/\lambda\geq C\log^{1/3}n$ suffices to imply Hamiltonicity. Moreover, they showed that Conjecture~\ref{conj:ks} holds in the special case where $d\geq n^\alpha$, for any fixed $\alpha$, that is, in this case \eqref{eqn:KSbound} can be weakened to $d/\lambda\geq C$.

Krivelevich and Sudakov applied their bound at~(\ref{eqn:KSbound}) to other problems on Hamiltonicity in sparse graphs; these and other applications are discussed in Section~\ref{sec:applications}. To allow applications to non-regular graphs, other pseudorandom conditions which imply Hamiltonicity were also studied. Motivated by this, shortly after Conjecture~\ref{conj:ks} was stated, several papers considered an even stronger conjecture, singling out the key properties of $(n,d,\lambda)$-graphs thought to give some potential for proving Hamiltonicity. To state this even stronger conjecture, whose variant appeared for example in \cite{brandt2006global}, we need the following definition.

\begin{defn}\label{def:expander}
An $n$-vertex graph $G$ with $n \geq 3$ is a \textit{$C$-expander} if,
\begin{itemize}
\item[(a)] $|N(X)|\geq C|X|$ for all vertex sets $X\subseteq V(G)$ with $|X|< n/2C$, and,
\item[(b)] there is an edge between any disjoint vertex sets $X,Y\subseteq V(G)$ with $|X|,|Y|\geq n/2C$.
\end{itemize}
\end{defn}

\begin{conj}\label{conj:main}
    For every sufficiently large $C>0$, every $C$-expander is Hamiltonian.
\end{conj}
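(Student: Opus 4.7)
The plan is to combine Pósa's rotation-extension technique with an absorption scheme, in four steps: (i) establish a connecting lemma from the expansion hypotheses; (ii) construct a short \emph{absorbing path} $P_0$ that can swallow any small remainder; (iii) extend $P_0$ to a nearly-spanning path via rotation-extension; and (iv) absorb the leftover vertices and close up to a Hamilton cycle.

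\textbf{Connecting lemma.} Using expansion property (a) of Definition~\ref{def:expander}, a BFS from any vertex reaches a set of size $\ge n/2C$ in $O(\log n / \log C)$ steps; property (b) then supplies an edge between two such BFS balls. A standard iteration gives that, for $C$ sufficiently large and any forbidden set $W$ of size $n/\mathrm{polylog}(n)$, every pair of vertices outside $W$ is joined by a path of length $O(\log n)$ avoiding $W$. This lemma will be used throughout to stitch short substructures together.

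\textbf{Absorbing path.} The heart of the proof is a path $P_0$ of sublinear length with the property that for every sufficiently small $U \subseteq V(G)\setminus V(P_0)$ there is a path on $V(P_0)\cup U$ with the same endpoints as $P_0$. For each vertex $v$ one finds many \emph{absorber gadgets}, i.e.\ short paths whose vertex set together with $v$ still spans a path with the same endpoints; the existence of many such gadgets per vertex follows from expansion applied to the neighbourhood of $v$. One then samples a random family of absorbers and chains them into $P_0$ using the connecting lemma, keeping $|V(P_0)|$ sublinear. I expect this to be the main obstacle: unlike $(n,d,\lambda)$-graphs, a $C$-expander can have a wildly irregular degree sequence and need not be locally pseudorandom, so ensuring that \emph{every} vertex has enough surviving absorbers simultaneously requires a careful, robust choice of gadget and a delicate union-bound style analysis.

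\textbf{Extension and closure.} Take a longest path $P\supseteq P_0$. If $|V(P)| \le (1-\eps)n$, let $S$ be the set of endpoints of paths on $V(P)$ obtained from $P$ by Pósa rotations fixing one endpoint; maximality of $P$ forces $N(S)\subseteq V(P)$. If $|S|<n/2C$, property (a) produces a neighbour of $S$ outside $V(P)$, extending $P$ and contradicting maximality. Otherwise $|S|\ge n/2C$, and the same holds for the analogous rotation-endpoint set from the other endpoint; property (b) then yields a chord producing a cycle on $V(P)$, and a further rotation converts this into an extension, again a contradiction. Hence $V(G)\setminus V(P)$ has size at most the absorption threshold, so the absorbing property of $P_0\subseteq P$ produces a Hamilton path. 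One final two-sided rotation followed by an application of property (b) supplies the closing edge, completing the Hamilton cycle.
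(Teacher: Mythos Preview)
Your Step 3 contains the decisive gap, and it is exactly the obstacle this paper is built to overcome. When you rotate the path $P$ from the $x_0$-end (fixing $y_0$) you obtain, for each $s\in S$, a \emph{different} path $P_s$ on the vertex set $V(P)$ with endpoints $s$ and $y_0$. Rotating the original $P$ from the $y_0$-end gives a set $T$, but $t\in T$ is an endpoint of yet another path $Q_t$, and an edge $st$ with $s\in S$, $t\in T$ does \emph{not} produce a cycle on $V(P)$: $s$ and $t$ are endpoints of different rearrangements of $P$, and the rotations performed on the two ends interfere with each other by re-routing common segments of the path. What you actually need is an edge in $\{su: s\in S,\ u\in T_s\}$, where $T_s$ depends on $s$; property~(b) of Definition~\ref{def:expander} gives nothing here unless you can exhibit linear-sized sets $A,B$ with $A\times B\subseteq \bigcup_s \{s\}\times T_s$. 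As the paper's proof outline (Section~\ref{sec:sketch}) explains, Hefetz--Krivelevich--Szab\'o managed $|A|,|B|\ge n/\log^{1-o(1)}n$, which is why the previous record was $C\approx \log n$; getting linear-sized such $A,B$ from rotations of a single path appears genuinely out of reach.

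The paper sidesteps this by rotating \emph{linear forests} rather than paths: starting from a spanning linear forest with many short paths, one can partition the paths into two groups and perform rotations entirely within each group, so that the two endpoint sets are produced independently on disjoint vertex sets; then property~(b) applies directly (Lemmas~\ref{lem:rotationforclosing} and~\ref{lem:generalpathforestrot}). The resulting forest is reduced to $n^{0.8}$ paths and then connected into a Hamilton cycle not by absorption but by a \emph{linking structure} built from sorting networks (Section~\ref{sec:linking}). Your absorbing-path idea also faces a separate difficulty you flag yourself: in a $C$-expander the minimum degree is only guaranteed to be $C$, so a given vertex $v$ may have all of its $C$ neighbours inside any prescribed set of moderate size, which undermines both the ``many absorbers for every $v$'' count and the connecting lemma's claimed robustness to forbidden sets of size $n/\mathrm{polylog}(n)$.
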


\noindent In 2012, Hefetz, Krivelevich and Szab\'o~\cite{hefetz2009hamilton} made progress on this problem; the precise expansion conditions used in their result can be found in Theorem 1.1 of~\cite{hefetz2009hamilton} (in particular weakening (a) in our Definition~\ref{def:expander}), but imply that every $(\log^{1-o(1)}{n})$-expander is Hamiltonian.

In this paper, we prove Conjecture~\ref{conj:main}, thus completing an extensive line of research on Hamiltonicity problems.
\begin{thm}\label{thm:main}
    For every sufficiently large $C>0$, every $C$-expander is Hamiltonian.
\end{thm}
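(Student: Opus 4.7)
The plan is to combine a robust absorption structure with the classical P\'osa rotation-extension technique. At a high level, I would first build a short \emph{absorbing path} $A \subseteq G$ that can swallow any small leftover vertex set; then use rotation-extension to extend $A$ to a path $P \supseteq A$ covering all but a tiny set $R \subseteq V(G) \setminus V(P)$; then absorb $R$ through $A$ to obtain a spanning path $P'$; and finally close $P'$ into a Hamilton cycle using condition (b) of Definition~\ref{def:expander}.

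For the absorber, the small-set expansion (a) lets me locate, for each $v \in V(G)$, gadgets such as $5$-paths $xyvzw$ in $G$ with the extra edge $yz$; such a gadget allows $v$ to be either traversed or skipped while preserving a path from $x$ to $w$. Because $C$ is only a constant, I cannot expect many disjoint gadgets per vertex, so rather than use a naive random selection I would use a Montgomery-style \emph{distributive absorber}: take a sparse bipartite template graph with a robust resilient-matching property and embed it inside $G$ so that each template vertex corresponds to such a gadget. Expansion supplies enough near-disjoint embeddings, and arranging the gadgets along a path yields an absorbing path $A$ of length $\varepsilon n$ capable of swallowing any $R \subseteq V(G)\setminus V(A)$ with $|R| \le \varepsilon^{2} n$, for some small $\varepsilon=\varepsilon(C)$.

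For the long path, I would iterate P\'osa's rotation lemma on a maximum path $P \supseteq A$, performing rotations only at the endpoint of $P$ away from $A$ so that $A$ remains intact throughout. The lemma gives a set $S$ of alternative endpoints reachable by bounded-depth rotations; condition (a) forces $|S|$ to grow by a factor of $\Omega(C)$ at each step as long as $|S| < n/2C$, so after $O(\log n / \log C)$ rounds $|S| \ge n/2C$. Condition (b) then yields an edge from $S$ to $V(G) \setminus V(P)$, extending $P$, unless the latter set is empty. Refining the argument to track the number of uncovered vertices, the maximum such $P$ misses at most $\varepsilon^{2} n$ vertices. Absorbing $R = V(G)\setminus V(P)$ through $A$ produces a spanning path $P'$, and repeating the rotation argument at both endpoints of $P'$ to produce endpoint-sets of size $\ge n/2C$, followed by one further application of (b), closes $P'$ into a Hamilton cycle.

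The main obstacle is Step~1, the absorber construction with genuinely constant expansion. At the polylogarithmic expansion scale of prior work (e.g., \cite{glock2023hamilton}), each vertex has many near-disjoint gadgets and routine random sampling produces a valid absorber; with only constant $C$ the per-vertex gadget count is itself a constant, and the distributive template must be carefully engineered so that a single random realisation simultaneously absorbs every possible leftover set $R$. A secondary difficulty is the rotation-extension bookkeeping: keeping $A$ intact while ensuring that the rotation-reachable endpoint set still grows at the required $\Omega(C)$ rate forces a careful choice of admissible rotations, and pushing $|R|$ below the absorber's threshold demands sharp quantitative control of this expansion. If both can be made compatible, the closing step follows directly from condition (b).
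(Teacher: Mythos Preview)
Your closing step contains the central gap. You write that ``repeating the rotation argument at both endpoints of $P'$ to produce endpoint-sets of size $\ge n/2C$, followed by one further application of (b), closes $P'$ into a Hamilton cycle.'' But rotating at one endpoint alters the path, so the two endpoint-sets are not independent: what you actually obtain is a set $S$ of alternative left endpoints, and for each $s\in S$ a set $F_s$ of alternative right endpoints of the \emph{rotated} path $P_s$, and you need an edge in $\bigcup_{s\in S}\{s\}\times F_s$. This is not a product of two linear-sized sets, so condition (b) does not apply. This is exactly the known obstruction: Hefetz--Krivelevich--Szab\'o managed to extract genuine product sets of size $n/\log^{1-o(1)}n$ from the rotation tree, but the paper explicitly explains (Section~\ref{sec:sketch}) that linear-sized product sets ``seem too hard to find, essentially because the rotations performed around each endvertex of the path interfere with each other.'' Your proposal treats as routine the step that is in fact the whole difficulty.

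The paper's approach is structurally different from yours and is designed precisely to sidestep this interference. Instead of absorbing into a single path, it rotates \emph{linear forests}, so that the two ``ends'' can be placed in vertex-disjoint sub-forests and rotated independently, producing honest linear-sized product sets to which (b) applies (Lemmas~\ref{lem:rotationforclosing} and~\ref{lem:generalpathforestrot}, including a ``hop'' trick to avoid accidental cycle creation). This is combined not with an absorber but with a sorting-network-based \emph{linking structure} (Section~\ref{sec:linking}) that can realise any bijection between two prescribed endpoint sets, together with a lexicographic-minimality argument (Section~\ref{sec:shortpaths}) showing that a suitably extremal spanning linear forest has at most $n^{0.8}$ paths.

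A secondary issue: your extension step only yields $|V(G)\setminus V(P)|<n/2C$, since once the leftover drops below $n/2C$ condition (b) no longer furnishes an extending edge and condition (a) says nothing about edges \emph{into} a small set. So your absorber would need capacity $\Omega(n/C)$, not $\varepsilon^2 n$ for arbitrarily small $\varepsilon$; you acknowledge the absorber construction is delicate at constant expansion, but even granting it, the closing step above remains unaddressed.
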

\noindent This result has a large number of applications, as discussed below. In particular, it is a standard exercise to show that for every $C>0$ there exists a constant $C_0$ such that, if $\frac{d}{\lambda}\geq C_0$, then every $(n,d,\lambda)$-graph is a $C$-expander. Thus, clearly \Cref{conj:ks} is implied by \Cref{thm:main}, giving the following.
\begin{thm}\label{thm:ndlambda}
     There is a constant $C>0$ such that if $\frac{d}{\lambda}\geq C$ then every $(n,d,\lambda)$-graph is Hamiltonian.
\end{thm}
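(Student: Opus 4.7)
The plan is to deduce \Cref{thm:ndlambda} directly from \Cref{thm:main} via the expander mixing lemma. Fix $C$ large enough that \Cref{thm:main} applies, so that every $C$-expander is Hamiltonian. It then suffices to find a constant $C_0$, depending only on $C$, for which every $(n,d,\lambda)$-graph $G$ with $d/\lambda \geq C_0$ satisfies both clauses of \Cref{def:expander}.

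Clause (b) follows immediately from the expander mixing lemma: for disjoint $X, Y \subseteq V(G)$ with $|X|,|Y| \geq n/(2C)$, one has
\[
e(X, Y) \;\geq\; \frac{d|X||Y|}{n} - \lambda\sqrt{|X||Y|} \;\geq\; \sqrt{|X||Y|}\Bigl(\frac{d}{2C} - \lambda\Bigr) \;>\; 0
\]
whenever $d/\lambda > 2C$.

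For clause (a), I would invoke the standard spectral lower bound on vertex expansion. Writing $A$ for the adjacency matrix of $G$ and decomposing $\mathbf{1}_X = (|X|/n)\mathbf{1} + v$ with $v \perp \mathbf{1}$, the fact that $\mathbf{1}$ is an eigenvector of $A$ of eigenvalue $d$ gives $\|A\mathbf{1}_X\|^2 = d^2|X|^2/n + \|Av\|^2 \leq d^2|X|^2/n + \lambda^2 |X|$. Since the entries of $A\mathbf{1}_X$ are nonnegative, sum to $d|X|$, and are supported on $X \cup N(X)$, the Cauchy--Schwarz inequality yields
\[
|X| + |N(X)| \;\geq\; \frac{(d|X|)^2}{\|A\mathbf{1}_X\|^2} \;\geq\; \frac{|X|}{|X|/n + (\lambda/d)^2}.
\]
Rearranging, $|N(X)| \geq C|X|$ is equivalent to $|X|/n + (\lambda/d)^2 \leq 1/(C+1)$, and since $|X| < n/(2C)$, it suffices to have $(\lambda/d)^2 \leq (C-1)/(2C(C+1))$.

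Taking $C_0$ to be any constant larger than $\max\!\bigl(2C+1,\,\sqrt{2C(C+1)/(C-1)}\,\bigr)$ therefore ensures that $G$ is a $C$-expander, and so $G$ is Hamiltonian by \Cref{thm:main}. I do not expect any real obstacle in this step: all the substance of the theorem lives in \Cref{thm:main} itself, and what remains is precisely the routine spectral pseudorandomness computation the authors flag as a standard exercise.
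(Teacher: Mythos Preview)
Your proposal is correct and follows exactly the approach the paper takes: the paper states that it is a standard exercise to show that every $(n,d,\lambda)$-graph with $d/\lambda\geq C_0$ is a $C$-expander and then invokes \Cref{thm:main}, without giving further details. Your argument supplies precisely those details via the expander mixing lemma for clause~(b) and the Cauchy--Schwarz spectral bound on $\|A\mathbf{1}_X\|^2$ for clause~(a), and the computations are clean and correct.
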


\noindent To prove Theorem~\ref{thm:main}, we take a very different approach to the previous work on this problem, and so give a detailed outline of the proof in Section~\ref{sec:sketch}. 
We note here that in fact a stronger result than Theorem~\ref{thm:main} holds -- such graphs are not only Hamiltonian, but Hamilton-connected. Furthermore, the Hamilton cycle in \Cref{thm:ndlambda} can be found in polynomial time. We discuss these strengthenings in the concluding remarks.
We finish this section by giving some examples of the applications of Theorems~\ref{thm:main} and~\ref{thm:ndlambda}.

\subsection{Applications}\label{sec:applications}

Pseudorandom conditions for Hamiltonicity have found a large variety of applications, and Theorems~\ref{thm:main} and \ref{thm:ndlambda} immediately improve the bounds required for many such applications. We will discuss here three applications: Hamiltonicity in random Cayley graphs, Hamiltonicity in well-connected graphs and Hamilton cycles with few colours in edge-coloured graphs. Further applications, many of them discussed in \cite{glock2023hamilton,KS:03}, can be found in  various other fields, ranging from problems in positional games (see, e.g., \cite{ferber2015biased,clemens2012fast, hefetz2009hamilton,hefetz2014positional}), to questions about finding coverings and packings of Hamilton cycles in random and pseudorandom graphs (see, e.g., \cite{ferber2019packing,draganic2023optimal,hefetz2014optimal}), Hamiltonicity thresholds in different random graph models (see, e.g., \cite{frieze2018trace,alon2008k,frieze2008logconcave}), and for various other problems (see, e.g., \cite{hefetz2012sharp,johannsen2013expanders}), including as far afield as Alon and Bourgain's work on additive patterns in multiplicative subgroups~\cite{AB:14}.

\medskip

\noindent\textbf{Hamiltonicity in random Cayley graphs.} In 1969, Lov\'asz~\cite{lovasz1969combinatorial} made the following famous conjecture about the Hamiltonicity of vertex-transitive graphs, which are graphs in which any vertex can be mapped to any other vertex by an automorphism.
\begin{conj}\label{conj:lovasz}
Every connected vertex-transitive graph contains a Hamilton path, and, except for five known examples, a Hamilton cycle.
\end{conj}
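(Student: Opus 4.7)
The plan is to use Theorem~\ref{thm:main} as the driving engine, but Lovász's conjecture in its full generality cannot be reduced to expansion alone, since the offending vertex-transitive graphs (cycles, toroidal grids, small Cayley graphs of abelian groups) have bounded degree and are very far from being $C$-expanders for any large constant. So I would split the problem into an ``expanding'' regime and a ``structured'' regime, and handle them by different methods.

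For the expanding regime, I would first show that any connected vertex-transitive graph $G$ on $n$ vertices that is a $C$-expander for the constant $C$ of Theorem~\ref{thm:main} is Hamiltonian, and in fact Hamilton-connected, so only a Hamilton cycle needs to be produced (the path claim is trivial from it). To exploit vertex-transitivity, I would try to show that a large class of vertex-transitive graphs meets the expansion hypothesis: by Babai's vertex-isoperimetric inequality for vertex-transitive graphs, and the Aldous--Lyons--type spectral estimates, one gets nontrivial lower bounds on $|N(X)|$ in terms of $|X|$ and the diameter. Concretely, I would try to prove that if $G$ is a connected vertex-transitive graph of degree $d \geq d_0$ and diameter $o(\log n)$, then $G$ is a $C$-expander in the sense of Definition~\ref{def:expander}; Theorem~\ref{thm:main} then finishes this regime.

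For the structured regime (bounded degree, large diameter), the Lovász conjecture is forced to rely on the algebraic structure of the graph. Here I would follow the strategy used in the recent positive results on the conjecture: reduce to Cayley graphs via the Praeger--Xu classification and the observation that every connected vertex-transitive graph is a quasi-Cayley graph or has a small, well-understood quotient, and then appeal to the known cases of Cayley graphs on abelian groups (done by direct construction, Chen--Quimpo, Marušič, and later refinements) and on dihedral, nilpotent, and certain solvable groups. Groups that are close to being simple can be handled because, by Kassabov--Lubotzky--Nikolov and subsequent work, almost all generating sets make them expanders, putting them back into the first regime.

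The main obstacle, and the reason the conjecture is open, is precisely the interface between the two regimes: vertex-transitive graphs that are neither good expanders nor governed by a sufficiently restrictive algebraic structure (for example, Cayley graphs of groups of intermediate growth, or graphs whose automorphism group acts transitively but has no usable normal structure). Theorem~\ref{thm:main} eliminates the expander side of this difficulty in a clean way, but closing the gap genuinely requires a structure theorem saying that every connected vertex-transitive graph is either a $C$-expander or admits a normal partition refining it to a known Hamiltonian family; proving such a dichotomy, rather than the Hamilton-cycle step given by Theorem~\ref{thm:main}, is the part I would expect to consume essentially all of the effort.
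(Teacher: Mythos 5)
The statement you were asked to prove is Conjecture~\ref{conj:lovasz}, Lov\'asz's 1969 conjecture on Hamilton paths and cycles in vertex-transitive graphs. The paper does not prove this; it quotes it verbatim as a famous open problem to motivate the application to \emph{random} Cayley graphs (Theorem~\ref{thm:cayley random}). There is no proof in the paper to compare your attempt against, and you should be aware that nothing in this paper (nor, to date, anywhere else) settles Conjecture~\ref{conj:lovasz}. To the paper's credit, it is explicit that the only known progress towards Conjectures~\ref{conj:lovasz} and~\ref{conj:transitive} in general is Babai's $\Omega(\sqrt n)$ cycle bound, DeVos's recent improvement, and the Christofides--Hladk\'y--M\'ath\'e result for linear minimum degree.

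Your proposal is honest about the gap --- you say yourself that the expander/structured dichotomy you would need ``is the part I would expect to consume essentially all of the effort'' --- so this is a roadmap, not a proof, and it has the same status as the conjecture itself. Within that roadmap there are two more specific problems. First, the implication you state, that a connected vertex-transitive graph of degree $\geq d_0$ and diameter $o(\log n)$ is a $C$-expander in the sense of Definition~\ref{def:expander}, is not a known theorem and is not believable as stated: vertex-transitivity together with small diameter gives isoperimetric information of the Babai/Aldous type, but that yields only weak, typically $1/\mathrm{poly}(\log n)$ or $1/\mathrm{diam}$, vertex expansion, nowhere near the constant-factor expansion $|N(X)| \geq C|X|$ required for Theorem~\ref{thm:main}. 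Second, the ``structured regime'' reduction you sketch does not exist in the generality you need: there is no classification that reduces arbitrary connected vertex-transitive graphs to Cayley graphs (non-Cayley vertex-transitive graphs are genuinely troublesome, starting with Petersen, which is one of the five exceptions), the Praeger--Xu work concerns special arc-transitive families rather than a general structure theorem, and the known Hamiltonicity results for Cayley graphs cover abelian, dihedral, some nilpotent and small-order cases but nothing close to a dichotomy complementing the expander case. So the ``proof'' terminates at exactly the known frontier of the problem.

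If you want to stay within what this paper actually supports, the honest conclusion is: Theorem~\ref{thm:main} resolves the expander case cleanly and hence the \emph{random} Cayley graph version of Conjecture~\ref{conj:transitive}, but Conjecture~\ref{conj:lovasz} itself remains open precisely because of bounded-degree, large-diameter vertex-transitive graphs that are neither expanders nor reducible to a tractable algebraic family.
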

\noindent As Cayley graphs are vertex-transitive and none of the five known exceptions in Conjecture~\ref{conj:lovasz} are Cayley graphs, Lov\'asz's conjecture implies the following earlier conjecture, posed in 1959, by Strasser \cite{rapaportstrasser:59}.
\begin{conj}\label{conj:transitive}
Every connected Cayley graph is Hamiltonian.
\end{conj}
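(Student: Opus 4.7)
The plan is to leverage \Cref{thm:main}: if a connected Cayley graph $\Gamma=\mathrm{Cay}(G,S)$ is a $C$-expander for the universal constant $C$ coming from that theorem, then $\Gamma$ is Hamiltonian. Unfortunately this cannot suffice on its own---the cycle $\mathrm{Cay}(\mathbb{Z}_n,\{\pm 1\})$ is itself a connected Cayley graph whose vertex expansion is only $\Theta(1/n)$. The attack therefore naturally splits into an \emph{expanding regime}, where \Cref{thm:main} applies directly, and a structured \emph{non-expanding regime}, where the algebraic structure of $G$ must be used.

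In the expanding regime one first identifies a quantitative expansion threshold. A Cayley graph whose normalized adjacency operator has spectral gap bounded away from zero is a $C$-expander for an absolute constant $C$: this follows from the discrete Cheeger inequality together with the fact that, for vertex-transitive graphs, edge expansion can be upgraded to vertex expansion of the same order. Invoking \Cref{thm:main} then yields Hamiltonicity in one line. This already captures the random Cayley graph application mentioned in \Cref{sec:applications}: a uniformly random multiset $S\subseteq G$ of size $\Theta(\log|G|)$ produces a spectral expander by Alon--Roichman, and hence a Hamiltonian Cayley graph with high probability.

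For the non-expanding regime, the natural plan is an inductive reduction along subgroup structure. If $\Gamma$ has small vertex expansion then a separator argument, together with the transitive $G$-action, typically exposes a subgroup $H\le G$ whose cosets partition $V(\Gamma)$ into ``low-expansion pieces''; the induced subgraph on each coset is (a translate of) a Cayley graph on $H$, so Hamilton paths inside each coset are available by induction on $|G|$. One then tries to stitch these paths together using the inter-coset edges, which project to a Hamilton cycle in the quotient Cayley graph on $G/H$ (handled recursively, or by the expanding case). The main obstacle, and the reason this conjecture has resisted attack since 1959, is precisely this lifting step: choosing Hamilton paths in each coset with endpoints that line up consistently around the quotient cycle requires a Hamilton-connectedness-type flexibility that can fail when $H$ is not normal, or when the generators interact with the coset structure very asymmetrically. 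The realistic hope is that \Cref{thm:main} reduces the full conjecture to these genuinely structured, low-expansion cases, shifting the difficulty from a combinatorial Hamiltonicity problem to a group-theoretic one.
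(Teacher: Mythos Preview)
The statement you are attempting to prove is \Cref{conj:transitive}, which is a \emph{conjecture}, not a theorem. The paper does not prove it and does not claim to; it is cited as a famous open problem dating back to 1959. The only related result the paper actually establishes is \Cref{thm:cayley random}, which concerns \emph{random} Cayley graphs generated by $\Theta(\log n)$ random elements, and follows immediately from \Cref{thm:ndlambda} combined with the Alon--Roichman spectral gap estimate. There is therefore no ``paper's own proof'' of \Cref{conj:transitive} to compare against.

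Your proposal is, moreover, not a proof but a proof \emph{strategy}, and you yourself correctly identify why it fails. The expanding regime is fine and recovers exactly \Cref{thm:cayley random}. But in the non-expanding regime your reduction requires, at the key step, Hamilton-connectedness (or at least controlled-endpoint Hamilton paths) inside each coset, together with a consistent lifting around the quotient cycle. As you note, this can fail when $H$ is not normal or when the generating set interacts badly with the coset structure; nothing in the paper, including the Hamilton-connectedness strengthening of \Cref{thm:main}, supplies this, because the coset graphs need not be expanders at all. So the proposal reduces a famous open conjecture to another problem of comparable difficulty, which is not a proof.
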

\noindent Conjecture~\ref{conj:transitive} is known to be true when the underlying group is abelian, but the only progress towards the conjectures in general is a result of Babai \cite{babai1979long} that every vertex-transitive $n$-vertex graph contains a cycle of length $\Omega(\sqrt{n})$ (see~\cite{deVos} for a recent improvement by DeVos) and a result of Christofides, Hladk\'y and M\'ath\'e \cite{christofides2014hamilton} that every vertex-transitive graph of linear minimum degree contains a Hamilton cycle.

The ``random version'' of Conjecture~\ref{conj:transitive} is a natural relaxation of the original problem. Alon and Roichman~\cite{AR:94} showed that there is a constant $C>0$ for which, for any group $G$, the Cayley graph generated by a random set $S$ of $C \log |G|$ elements, $\Gamma(G,S)$ say, is almost surely connected. Hence, an important instance of Conjecture \ref{conj:transitive} is to show that $\Gamma(G,S)$ is almost surely Hamiltonian. This problem was also stated as a conjecture by Pak and Radoi\v{c}i\'{c}~\cite{PR:09}. Theorem \ref{thm:ndlambda} resolves this conjecture. Indeed, Alon and Roichman \cite{AR:94} showed that if $|S| \geq C \log |G|$ for some large constant $C$, then $\Gamma(G,S)$ is almost surely an $(n,d,\lambda)$-graph with $d/ \lambda \geq K$ for some large constant $K$. Thus, we obtain the following.

\begin{thm}\label{thm:cayley random}
Let $C$ be a sufficiently large constant. Let $G$ be a group of order $n$ and $d \geq C\log n$. If $S\subseteq G$ is a set of size $d$ chosen uniformly at random, then, with high probability, $\Gamma(G,S)$ is Hamiltonian.
\end{thm}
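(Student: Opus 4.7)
The plan is to reduce Theorem~\ref{thm:cayley random} directly to Theorem~\ref{thm:ndlambda}, using the classical theorem of Alon and Roichman~\cite{AR:94} as a black box to produce the required spectral gap.

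First, I would fix $C$ large enough that the conclusion of Theorem~\ref{thm:ndlambda} holds whenever $d/\lambda \geq K$ for some constant $K$ that we will choose below. Let $S \subseteq G$ be a uniformly random subset of size $d \geq C\log n$. To get a genuine (simple, undirected) Cayley graph, I would work with the symmetrized generating set $S \cup S^{-1}$ and argue that, on the almost sure event that $|S \cap S^{-1}|$ is small relative to $d$, the graph $\Gamma(G, S \cup S^{-1})$ is regular of degree $d' = (1 \pm o(1)) \cdot 2d$, which is still at least $C\log n$. (Equivalently, one can model $S$ as a random symmetric generating set from the start; the Alon–Roichman bound is robust to this choice.)

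Next, I would invoke Alon–Roichman: for a uniformly random multiset of $\Theta(\log n / \eps^2)$ elements, the second-largest eigenvalue (in absolute value) of the corresponding Cayley graph is at most $\eps d$ with high probability. Choosing $\eps = 1/K$ and taking $C$ large enough in terms of $K$, we obtain that with high probability $\Gamma(G, S \cup S^{-1})$ is an $(n, d', \lambda)$-graph with $d'/\lambda \geq K$. Applying Theorem~\ref{thm:ndlambda} immediately yields a Hamilton cycle.

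There is no substantial obstacle here once Theorem~\ref{thm:ndlambda} is in hand; the only technical point is the bookkeeping in passing from the random set $S$ to a symmetric generating set without losing the degree bound or the spectral estimate. Both are standard and were already noted essentially in full in the excerpt: the Alon–Roichman theorem produces the pseudorandom structure, and our new Hamiltonicity result converts pseudorandomness into a Hamilton cycle, resolving the conjecture of Pak and Radoi\v{c}i\'c.
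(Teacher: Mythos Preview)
Your proposal is correct and matches the paper's own argument essentially verbatim: the paper simply cites Alon--Roichman to conclude that $\Gamma(G,S)$ is almost surely an $(n,d,\lambda)$-graph with $d/\lambda \geq K$ for large $K$, and then applies Theorem~\ref{thm:ndlambda}. The symmetrization bookkeeping you mention is the only extra detail you add, and it is routine.
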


\smallskip

\noindent\textbf{Hamiltonicity in well-connected graphs.}
For any function $f : \mathbb{Z}^+ \rightarrow
\mathbb{R}$, we say that a graph $G$ is \emph{$f$-connected} if $|A \cap B| \geq f(\min(|A \setminus B|,|B \setminus A|))$ for every two subsets $A,B \setminus V(G)$ such that $V(G) = A \cup B$ and there is no edge between $A \setminus B$ and $B \setminus A$. In~\cite{brandt2006global}, Brandt, Broersma, Diestel, and Kriesell proved that if $f(k) \geq
2(k+1)^2$ for every $k \in \mathbb{N}$ then $G$ is Hamiltonian. This bound was improved to $k\log k + O(1)$ in \cite{hefetz2009hamilton}. Brandt et al.\ also conjectured that there exists a
function $f$ which is linear in $k$ yet ensures
Hamiltonicity. It is not difficult to check that if $f(k) = Ck$ for all $k$, then an $f$-connected graph is a $(C/2)$-expander. Indeed, given a set $X \subseteq V(G)$ of size at most $n/C$ and its neighbourhood, $N(X)$, applying the $f$-connected condition with $A = X \cup N(X), B = V(G) \setminus X$ shows that $|N(X)| \geq C|X|/2$, while, given two disjoint sets $X,Y \subseteq V(G)$ of size at least $n/C$, setting $A = V(G) \setminus X, B = V(G) \setminus Y$ and applying the $f$-connected condition shows that there is an edge between $X,Y$. Therefore, Theorem \ref{thm:main} implies the conjecture, as follows.
\begin{thm} \label{fconnected} Let $C$ be a sufficiently large constant and let $f(k)=Ck$ for every $k\in \mathbb{Z}^+$. Then, any $f$-connected graph with at least 3 vertices is Hamiltonian.
\end{thm}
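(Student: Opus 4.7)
The plan is to reduce Theorem \ref{fconnected} directly to Theorem \ref{thm:main}, since the preceding discussion already indicates that an $f$-connected graph with $f(k)=Ck$ should essentially be a $(C/2)$-expander. Concretely, I would fix $C$ large enough that $C/2$ exceeds the absolute constant supplied by Theorem \ref{thm:main}, and then verify that any such $f$-connected graph $G$ on $n \geq 3$ vertices satisfies both conditions of \Cref{def:expander} with parameter $C/2$; Hamiltonicity then follows immediately.

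For condition (a), given a set $X \subseteq V(G)$ with $|X| < n/C$, I would set $A = X \cup N(X)$ and $B = V(G) \setminus X$, so that $A \cup B = V(G)$, $A \setminus B = X$, and $B \setminus A = V(G) \setminus (X \cup N(X))$. Since $N(X) \subseteq A$, there is no edge between $A \setminus B$ and $B \setminus A$, so the $f$-connectedness hypothesis applies and yields $|N(X)| = |A \cap B| \geq f(\min(|X|, |V(G) \setminus (X \cup N(X))|))$. If $|V(G) \setminus (X \cup N(X))| \geq |X|$, this is $|N(X)| \geq C|X|$, which is stronger than needed. Otherwise $|N(X)| > n - 2|X| > n(1 - 2/C)$, which is comfortably at least $(C/2)|X|$ for large $C$ since $|X| < n/C$. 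For condition (b), given disjoint $X,Y$ of size at least $n/C$ with no edge between them, I would set $A = V(G)\setminus X$ and $B = V(G)\setminus Y$, so that $A \cup B = V(G)$ and $A \setminus B = Y$, $B \setminus A = X$ have no edges between them. Then $f$-connectedness forces $|V(G) \setminus (X \cup Y)| = |A \cap B| \geq f(n/C) = n$, an immediate contradiction.

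There is essentially no obstacle here, as this reduction is pure bookkeeping; all of the real work is concentrated in Theorem \ref{thm:main}, which we invoke as a black box. The only minor point requiring care is ensuring the $f$-connected hypothesis actually applies in each of the above configurations (i.e.\ that $A \cup B = V(G)$ and that no edge crosses between $A \setminus B$ and $B \setminus A$), and that the edge case in part (a) where the ``outside'' $V(G) \setminus (X \cup N(X))$ is smaller than $X$ is handled separately by observing that $N(X)$ is then already nearly all of $V(G)$.
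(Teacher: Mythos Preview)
Your proposal is correct and follows exactly the same reduction as the paper: verify that an $f$-connected graph with $f(k)=Ck$ is a $(C/2)$-expander by applying the $f$-connectedness hypothesis with $A = X \cup N(X),\, B = V(G)\setminus X$ for condition (a) and with $A = V(G)\setminus X,\, B = V(G)\setminus Y$ for condition (b), then invoke Theorem~\ref{thm:main}. You are actually slightly more careful than the paper in that you explicitly handle the edge case in (a) where $|V(G)\setminus(X\cup N(X))| < |X|$, which the paper glosses over.
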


\smallskip

\noindent\textbf{Hamilton cycles using few colours.}
An \emph{optimally} edge-coloured graph $G$ is one which is properly coloured using the fewest possible number of colours. Akbari, Etesami, Mahini, and
Mahmoody \cite{AEMM:07} proved that any optimally coloured $n$-vertex complete graph $K_n$ has a Hamilton cycle containing edges of at most $8\sqrt{n}$ colours, and conjectured there should always be such a cycle using only $O(\log n)$ colours, which would be best possible up to a multiplicative constant.
The bound from \cite{AEMM:07} on the number of colours was later improved to $O(\log^3 n)$ by Balla, Pokrovskiy and Sudakov~\cite{BPS:17}. Their strategy is to randomly pick $d=O(\log^3 n)$ colours and show that the subgraph of these colours is an $(n,d,\lambda)$-graph with high probability, for some appropriate $\lambda$, and apply the result from \cite{KS:03} on the Hamiltonicity of $(n,d,\lambda)$-graphs. Using their improved condition, Glock, Munh\'a Correia and Sudakov~\cite{glock2023hamilton} showed that this is possible with only $O(\log^{5/3} n)$ colours using the same method. Applying instead the bound in Theorem~\ref{thm:ndlambda} immediately proves the conjecture (see \cite{glock2023hamilton} for more details).
\begin{thm}\label{thm:Ham colors}
Every optimally coloured $K_n$ has a Hamilton cycle with $O(\log n)$ colours.
\end{thm}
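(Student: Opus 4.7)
The plan is to follow the random-sampling strategy of Balla, Pokrovskiy, and Sudakov~\cite{BPS:17}, but feed the stronger $(n,d,\lambda)$-Hamiltonicity result from Theorem~\ref{thm:ndlambda} into it in place of the bound from~\cite{KS:03} or~\cite{glock2023hamilton}. Concretely, let $C_0$ be the constant from Theorem~\ref{thm:ndlambda}, and pick a sufficiently large constant $C$ so that $C/C_0^2$ exceeds any absolute constant appearing in the sub-Gaussian tail below. Given an optimally edge-coloured $K_n$ (so the number of colour classes is $n-1$ or $n$ depending on parity, and each class is a perfect or near-perfect matching), I would pick a set $S$ of exactly $d=\lceil C\log n\rceil$ colours uniformly at random, and consider $H\subseteq K_n$ consisting of all edges whose colour lies in $S$. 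The graph $H$ then uses at most $d=O(\log n)$ colours, so it suffices to show that with high probability $H$ contains a Hamilton cycle.

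The next step is to verify that with high probability $H$ is an $(n,d',\lambda)$-graph with $d'/\lambda \geq C_0$, for some $d'\approx d$. Regularity is essentially free: since each colour class is a (near-)perfect matching, every vertex is incident to exactly one edge of each chosen colour (with a negligible exceptional vertex when $n$ is odd, which can be handled by a minor modification, e.g.\ removing a vertex or conditioning on it being covered). Thus $H$ is $d$-regular (or differs from being $d$-regular by an at most $1$-factor at one vertex, which is easy to absorb into the expansion calculation).

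The heart of the argument is the bound on $\lambda(H)$. Writing $A_i$ for the adjacency matrix of the $i$-th colour class of $K_n$ and $\chi_i$ for the indicator that $i\in S$, the adjacency matrix of $H$ is
\[
A_H \;=\; \sum_{i} \chi_i A_i,
\]
and its expectation is essentially $\tfrac{d}{n-1}(J-I)$, which has one large eigenvalue close to $d$ and all others of order $O(d/n)$. Applying a matrix concentration inequality (matrix Bernstein, or the trace-moment computation used in~\cite{BPS:17,glock2023hamilton}) to the centred sum $A_H-\mathbb{E}[A_H]$, using that each $\|A_i\|\leq 1$ and that the variance proxy is of order $d$, yields
\[
\|A_H-\mathbb{E}[A_H]\|\;=\;O(\sqrt{d\log n})
\]
with probability at least $1-o(1)$. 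Hence, for our choice $d=\lceil C\log n\rceil$, all eigenvalues of $A_H$ other than the top one are at most $O(\sqrt{d\log n})=O(d/\sqrt{C})$, which is at most $d/C_0$ once $C$ is large. Feeding this $(n,d,\lambda)$-graph into Theorem~\ref{thm:ndlambda} yields a Hamilton cycle of $H$, whose edges use only the $d=O(\log n)$ colours in $S$.

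The main obstacle is not the overall scheme, which is already known from~\cite{BPS:17,glock2023hamilton}, but ensuring the concentration inequality for $\lambda(H)$ is tight enough that the constant $C$ in $d=C\log n$ can be chosen large enough to guarantee $d/\lambda\geq C_0$; this is exactly the point where a sub-optimal spectral bound in previous work forced $\log^3 n$ or $\log^{5/3}n$ colours, and where the availability of Theorem~\ref{thm:ndlambda} (which needs only a constant $d/\lambda$) lets us reach the conjectured $O(\log n)$.
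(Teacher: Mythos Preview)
Your proposal is correct and follows essentially the same approach as the paper: the paper does not give a standalone proof but simply notes that the random-colour-sampling strategy of~\cite{BPS:17,glock2023hamilton} combined with the new constant-ratio bound of Theorem~\ref{thm:ndlambda} immediately yields the result, referring to~\cite{glock2023hamilton} for the details of the spectral estimate. One minor clarification: the improvement to $O(\log n)$ colours comes entirely from the stronger Hamiltonicity criterion in Theorem~\ref{thm:ndlambda}, not from a sharper spectral bound---the concentration estimate $\lambda(H)=O(\sqrt{d\log n})$ was already available in the earlier works.
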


\section{Preliminaries}\label{sec:preliminaries}
\subsection{Notation.} 
Our notation is standard, and we recall here only the most important. A graph $G$ has vertex $V(G)$ and edge set $E(G)$, and we set $|G|=|V(G)|$. A \emph{linear forest} is a graph consisting of a collection of vertex disjoint paths. We denote by $\text{End}(\F)$ the set of vertices which are endpoints of paths in $\F$.
Two vertices are \emph{at distance $\ell$ in $G$} if the length of the shortest path between them in $G$ is $\ell$. 
For a subset of vertices $X$, we denote by $G[X]$ the induced subgraph of $G$ with vertex set $X$, by $\Gamma_G(X)$ the set of vertices adjacent to at least one vertex in $X$, and by $N_G(X)$ the (outer) neighbourhood $N_G(X)=\Gamma_G(X)\setminus X$. We omit $G$ in the subscript when it is clear from the context which graph we are working with. 

\subsection{Proof outline}\label{sec:sketch}
Where $G$ is an $n$-vertex $C$-expander, for some large constant $C$, we wish to find a Hamilton cycle in $G$. Before describing our methods, it is instructive to briefly recall the well-known P\'osa rotation-extension approach, and how the approach works on random graphs but not on pseudorandom graphs (for a more detailed approach than this sketch, see, for example,~\cite{bollobas1984evolution}).

Take a maximal length path $P$ in $G$, with endvertices $x$ and $x_0$, say. If $x$ has a neighbour $y$ in $G$ on $P$ whose neighbour on $P$ which is nearest to $x$,  $z$ say, is not equal to $x$, then the path $P$ can be rotated by adding $xy$ and removing $yz$ (see Figure~\ref{fig:rotation}). 
\begin{figure}[t]
\begin{center}
\begin{tikzpicture}[scale=1.4]

  \coordinate (x) at (0,0);
  \coordinate (x_0) at (4,0);
  \coordinate (y) at (3,0);
  \coordinate (z) at (2.75,0); 

 \draw [line width=4pt, color=blue, opacity=0.3] (x)-- (z);
    \draw [line width=4pt, color=blue, opacity=0.3] (y)-- (x_0);
  \draw [line width=4pt, color=blue, opacity=0.3] (x)to[ bend left=30] (y);
  
  \draw (x) -- (x_0) node[midway, below, thick] {};

  \draw [densely dashed,black!70] (x) to[bend left=30] (y) node[midway, above] {};
     
  \foreach \point in {x_0, x, y, z}
    \fill (\point) circle (1pt) node[below] {$\point$};

    \draw ($(x)-(0.4,0)$) node {$P$\,:};

\end{tikzpicture}\;\;\;\;\;\;
\begin{tikzpicture}[scale=1.4]

  \coordinate (x) at (0,0);
  \foreach \num/\xarg in {1/0.5,2/0.75,3/1.75,4/2,5/3.5,6/3.75}
  {
  \coordinate (x\num) at (\xarg,0);
  }
  \coordinate (x_0) at (4,0);
  \coordinate (y) at (3,0);
  \coordinate (z) at (2.75,0); 

  \draw [line width=4pt, color=blue, opacity=0.3] (x)to[ bend left=30] (y);
  
  \foreach \vxx/\vxy in {x/x1,x2/x3,x6/x_0,x4/z,y/x5}
  {
 \draw [line width=4pt, color=blue, opacity=0.3] (\vxx)-- (\vxy);
  \draw (\vxx) -- (\vxy) node[midway, below, thick] {};
  }
   \draw (z) -- (y) node[midway, below, thick] {};

  \draw [densely dashed,black!70] (x) to[bend left=30] (y) node[midway, above] {};
     
  \foreach \point in {x, y, z}
    \fill (\point) circle (1pt) node[below] {$\point$};

  \foreach \point in {x1,x2,x3,x4,x5,x6,x_0},
    \fill (\point) circle (1pt);

    \draw ($(x)-(0.4,0)$) node {$\F$\,:};

\end{tikzpicture}
\vspace{-0.2cm}
\caption{On the left, a rotation of an $xx_0$-path $P$ by removing $zy$ and adding $xy$ to get an $xx_0$-path highlighted in blue. On the right, an example rotation of a linear forest $\F$ to get the 4 paths highlighted in blue, thus removing $x$ from the set of endvertices and adding $z$.} 
\label{fig:rotation}
\end{center}
\end{figure}
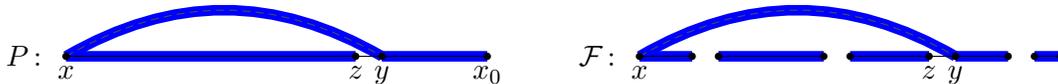
This gives us a path with the same length as $P$, where one endvertex is $x_0$ and the other is a new endvertex, $z$.
As P\'osa showed, if $G$ is a $C$-expander, then this can be done iteratively to show that there are at least (say) $n/3$ different new endvertices $v$, say those in $E_{x_0}$, so that you can rotate $P$ repeatedly to get an $x_0v$-path $P_v$ with the same length as $P$. Then, each of these paths $P_v$ can be rotated without changing the endvertex $v$, to get at least $n/3$ new endvertices, say those in $F_v$. If $G$ contains any edge in $\{vu:v\in E_{x_0},u\in F_v\}$, $G$ has a cycle with length $|P|$, whence, as $G$ is connected (a trivial consequence of the expansion condition), if $|P|<n$ then we can find a longer path than $P$, a contradiction. Thus, $G$ has a Hamilton cycle. Of course, $G$ may not contain any edge in $\{vu:v\in E_{x_0},u\in F_v\}$. When working with random graphs, however, we can reserve some random edges and sprinkle them in to find an edge in  $\{vu:v\in E_{x_0},u\in F_v\}$ and hence a longer path than $P$ -- doing this iteratively can get a Hamilton path and thus a Hamilton cycle.

In pseudorandom graphs we cannot do this sprinkling. Thus, the dream strategy would be to find two large sets $A,B$ with $A\times B\subset \{vu:v\in E_{x_0},u\in F_v\}$, so that we can apply Definition~\ref{def:expander}b) to find the edge. Indeed, for their result discussed in the introduction, Hefetz, Krivelevich and Szab\'o~\cite{hefetz2009hamilton} found such sets $A,B$ with $|A|,|B|\geq n/\log^{1-o(1)}n$. However, large linear sets $A$ and $B$ (as would be required for Theorem~\ref{thm:main}) seem too hard to find, essentially because the rotations performed around each endvertex of the path interfere with each other by rotating sections of the paths.

For our approach, instead of rotating only paths, we perform rotations of disjoint union of paths, i.e., of linear forests. The details of these rotations can be found in Section~\ref{sec:rotations}, but, analogously, we alter a linear forest and preserve all but one endvertex of these paths (one sample rotation is depicted in Figure~\ref{fig:rotation}), iteratively developing many different possibilities for the endvertex we are changing. The implementation is much more intricate, but, essentially, if most of the vertices in a linear forest $\mathcal{F}$ are not in long paths then we can take two endvertices $x$ and $x'$ of different paths, divide the paths in $\mathcal{F}$ into two groups $\mathcal{F}_x$ and $\mathcal{F}_{x'}$, and perform rotations just within these groups to find new endvertices in place of $x$ and $x'$. As these two sets of rotations are done using disjoint linear forests, they can be done independently, to get sets of new endvertices $A$ and $B$ respectively, such that any edge from $A$ to $B$ will allow us to connect two paths together, reducing the number of paths in the linear forest by 1 and losing the endvertices $x$ and $x'$. This is done in Section~\ref{sec:rotations}.

We then wish to apply this iteratively to reduce the number of paths in our linear forest. However these rotations may create long paths (so that it is hard to find the partition into forests $\mathcal{F}_x$ and $\mathcal{F}_{x'}$). Thus, we combine this with an argument showing if $\mathcal{F}$ has too many long paths then we can replace it with a linear forest $\mathcal{F}'$ whose lengths are more evenly spread yet whose endvertices are still a subset of the endvertices of $\mathcal{F}$. This is also done using rotations, and is carried out in Section~\ref{sec:shortpaths} (which includes a sketch of the proof once the necessary definitions have been introduced). Starting with a spanning linear forest $\mathcal{F}$, we can alternately perform rotations and add an edge (reducing the number of paths by one) while losing any two endvertices (where, moreover, with some more effort, we can specify these two endvertices) and then replace the current linear forest with one with more evenly spread path lengths (so that the first step keeps working). Ultimately, this allows us to reach $\mathcal{F}'$, a spanning linear forest of at most $n^{0.8}$ paths, all of whose endvertices were also endvertices of paths in $\mathcal{F}$.

This argument can be pushed further to end up with fewer paths, but cannot produce a Hamilton path let alone a Hamilton cycle. Thus, we need some way to connect the paths we produce into a Hamilton cycle. We do this by setting aside at the outset an \emph{$(A,B)$-linking structure $H$} in $G$, where $|A|=|B|$ and this has the property that $H$ contains a spanning linear forest connecting any desired partition of pairs $a,b$ with $a\in A$ and $b\in B$. More precisely, this is defined as follows.

\begin{defn}
A graph $H$ with disjoint sets $A,B \subseteq V(H)$ of equal size is said to be an $(A,B)$-linking structure if for every bijection $\varphi: A\rightarrow B$ there exist vertex disjoint paths $P_1, \ldots, P_{|A|}$ of equal length such that the following hold.
\begin{enumerate}
    \item The paths cover all the vertices of $H$, so that $V(H)=V(P_1) \cup \ldots \cup V(P_{|A|})$.
    \item For each $i\in [|A|]$, the path $P_i$ has endpoints $a$ and $\varphi(a)$ for some $a\in A$.
\end{enumerate}
\end{defn}

\noindent By using properties of sorting networks, adapting an approach of Hyde, Morrison, M\"uyesser, and Pavez-Sign\'e~\cite{hyde2023spanning},  we can find an $(A,B)$-linking structure $H$ in $G$ with $|A|=|B|=n^{0.9}$ and $|H|=o(n^{0.95})$. Taking then an initial linear forest $\mathcal{F}$ which covers $G-V(H)\setminus (A\cup B)$ and has every vertex in $A\cup B$ among its endvertices, we then apply our methods to reach a linear forest $\mathcal{F}'$ with the same vertex set as $\mathcal{F}$, but whose endvertices are exactly those in $A\cup B$ (with no isolated vertices in $\F'$). Then, applying the linking property of $H$ allows us to
connect the paths in $\mathcal{F}''$ together into our desired Hamilton cycle (see Figure~\ref{fig:hamcycle}).

\vspace{-0.2cm}

\begin{figure}[htb]
\centering
\begin{tikzpicture}[scale=0.6]
\def\nodeDist{1cm}
\draw[rounded corners, line width=1pt] (-0.3,-6.35*\nodeDist) rectangle (4.3,0.35*\nodeDist);

\draw[dotted,thick] (0.3, -6.35*\nodeDist) -- (0.3, 0.35*\nodeDist);
\draw[dotted,thick] (3.7, -6.35*\nodeDist) -- (3.7, 0.35*\nodeDist);


\foreach \y in {0,...,2} {
\draw[red,thick] plot[smooth] coordinates {(0,0-\y*\nodeDist) (-0.5-0.7*\y,1+0.25*\y) (4.5+0.7*\y,1+0.25*\y)(4,0-\y*\nodeDist)};
}

\foreach \y in {3,5} {
\draw[red,thick] plot[smooth] coordinates {(0,0-\y*\nodeDist) (-3,-\y*\nodeDist+0.3*\nodeDist) (-3,-\y*\nodeDist-1.3*\nodeDist)(0,0-\y*\nodeDist-\nodeDist)};
}

\foreach \y in {3,5} {
\draw[red,thick] plot[smooth] coordinates {(4,0-\y*\nodeDist) (7,-\y*\nodeDist+0.3*\nodeDist) (7,-\y*\nodeDist-1.3*\nodeDist)(4,0-\y*\nodeDist-\nodeDist)};
}

\foreach \i in {0,...,5} {
  \draw[thick, blue] (4,-\i*\nodeDist) -- (0,-\i*\nodeDist-\nodeDist);
}
\draw[thick,blue] (4,-6*\nodeDist) -- (0,0);

\foreach \y in {0,...,6} {
  \filldraw (0,-\y*\nodeDist) circle (3pt);
}
\node[color=black, scale=1]  at (2,-6*\nodeDist) {$X$};
\node[color=black, scale=1]  at (4,-6.75*\nodeDist) {$B$};
\node[color=black, scale=1]  at (0,-6.75*\nodeDist) {$A$};

\foreach \y in {0,...,6} {
  \filldraw (4,-\y*\nodeDist) circle (3pt);
}
\end{tikzpicture}
\caption{Given a linear forest $\F'$ (depicted in red) which spans $G-(X\setminus (A\cup B))$ with no isolated vertices and endvertices $\End(\F') = A \cup B$, if $G[X]$ is an $(A,B)$-linking structure, then the paths in blue can be found disjointly while using all the vertices in $X$, thus linking $\F'$ into a Hamilton cycle.} 
\label{fig:hamcycle}
\end{figure}
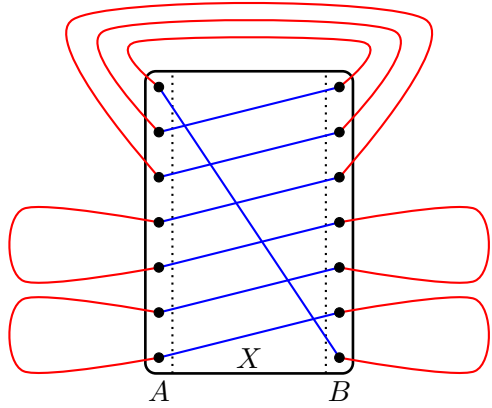


\subsection{Expansion and linear forests}
\noindent In order to carry out P\'osa rotations in linear forests, we need to define the interior of sets with respect to the linear forest, and a variation of expansion where we only consider the neighbourhood of sets within the interior of the linear forest. For this, we will use the following definitions.
\begin{defn}
Let $G$ be a graph, and $U,V\subseteq V(G)$ with $|U|\geq 10^5C$. We say that  $U$ \textit{$C$-expands into $V$ (in $G$)} if all $X\subseteq U$ with $|X|\leq |U|/5000C$ have $|N_G(X)\cap V|\geq C|X|$.
\end{defn}
\begin{defn}
Let $G$ be a graph and $\F$ a linear forest in $G$. For a set $U \subseteq V(\F)$, we define the \emph{interior of $U$ in $\F$} to be the set $\inter_{\F}(U):=\{u\in U: N_{\F}(u)\subseteq U\}$. Furthermore, we say that $U$ is an \textit{$(\F,C)$-expander} if $U$ $C$-expands into $\inter_\F(U)$.
\end{defn}
\noindent The following observation is immediate from the previous definition.
\begin{obs}\label{obs:interior}
For all linear forests $\F$ and subsets $X, Y \subseteq V(\F)$ it must be that $\inter_{\F}(X \setminus Y) \geq \inter_{\F}(X) - 3|Y|$.
\end{obs}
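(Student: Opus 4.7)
The plan is a direct containment-plus-counting argument. First I would verify that $\inter_{\F}(X \setminus Y) \subseteq \inter_{\F}(X)$: if $v \in \inter_{\F}(X \setminus Y)$, then $v \in X \setminus Y \subseteq X$ and $N_{\F}(v) \subseteq X \setminus Y \subseteq X$, so $v \in \inter_{\F}(X)$. Consequently the desired inequality reduces to the bound
\[
|\inter_{\F}(X) \setminus \inter_{\F}(X \setminus Y)| \leq 3|Y|.
\]

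Next I would characterise the vertices of the set difference. Suppose $v \in \inter_{\F}(X) \setminus \inter_{\F}(X \setminus Y)$. Then $v \in X$ with $N_{\F}(v) \subseteq X$. Since $v$ fails to lie in $\inter_{\F}(X \setminus Y)$, either $v \notin X \setminus Y$, i.e.\ $v \in Y$, or else $v \in X \setminus Y$ but some neighbour $u \in N_{\F}(v)$ lies outside $X \setminus Y$, and then since $u \in X$ we must have $u \in Y$. In either case $v$ lies in $Y \cup N_{\F}(Y)$.

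To finish, I would use the fact that $\F$ is a linear forest, so $\Delta(\F) \leq 2$. Each $y \in Y$ is counted once for itself and contributes at most two additional vertices to $N_{\F}(Y)$, giving $|Y \cup N_{\F}(Y)| \leq 3|Y|$. Combining with the previous paragraph yields the required bound, and hence
\[
|\inter_{\F}(X \setminus Y)| = |\inter_{\F}(X)| - |\inter_{\F}(X) \setminus \inter_{\F}(X \setminus Y)| \geq |\inter_{\F}(X)| - 3|Y|.
\]

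There is no real obstacle here — the statement is a routine bookkeeping observation about how the interior operator behaves under deletion, with the constant $3$ coming precisely from the maximum degree $2$ of a linear forest (so the factor could in principle be tightened to $|Y| + |N_\F(Y)|$, but $3|Y|$ is the clean form used in later applications).
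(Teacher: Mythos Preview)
Your argument is correct and is exactly the natural way to justify this observation. The paper itself gives no proof at all---it simply declares the statement ``immediate from the previous definition''---so your write-up is just the spelled-out version of what the authors had in mind: a vertex lost from the interior upon deleting $Y$ must lie in $Y\cup N_{\F}(Y)$, and $\Delta(\F)\le 2$ bounds this by $3|Y|$.
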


\noindent To carry out rotations within only some of the paths in a linear forest, we will need to find expanding subgraphs sitting within the interior of these specific paths. Often expander subgraphs can be found using only Definition~\ref{def:expander}b) by removing a maximal set of vertices which does not expand and is not too large (see, for example,~\cite[Section 3.7]{montgomery2019spanning}). Here, we work similarly, but remove multiple sets $B_{i,j}$ to find a collection of 4 sets $U_i'$ with expansion properties (with respect to some linear forest $\mathcal{F}$) as follows. 

\begin{lem}\label{lem:forestexpander} Let $C\geq 10^5$ and $C'=C/5000$.
Let $G$ be a $C$-expander containing a linear forest $\F$, and let $U_1, U_2, U_3, U_4\subset V(G)$ be disjoint sets with $|\inter_{\F}(U_i)|\geq n/500$ for each $i\in [4]$. Then, there are subsets $U_i'\subseteq U_i$, $i\in [4]$, such that 
\begin{itemize}
    \item for each $i,j\in [4]$, $U_i'$ $C'$-expands into $\inter_{\F}(U_j')$, and 
    \item $|U_i'|\geq |U_i|-2n/C$ for each $i\in [4]$.     
\end{itemize}
In particular, for each $i\in [4]$, $U'_i$ is an $(\F,C')$-expander.
\end{lem}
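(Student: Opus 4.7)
Set $C' = C/5000$. The idea is to greedily remove ``bad witnesses'' from the $U_i$ until none remain. Initialize $B_i = \emptyset$ for each $i \in [4]$. At each step, look for indices $i,j\in[4]$ and a set $X \subseteq U_i \setminus B_i$ with $|X| \leq n/C$ and $|N_G(X) \cap \inter_{\F}(U_j \setminus B_j)| < C'|X|$; if such a triple exists, add $X$ to $B_i$ (also recording the pair $(i,j)$ under which it was added), and otherwise halt. Since the $B_i$'s only grow and live in the finite set $V(G)$, the process terminates. At termination, setting $U_i' := U_i \setminus B_i$, every $X \subseteq U_i'$ with $|X| \leq |U_i'|/C \leq n/C$ satisfies $|N_G(X) \cap \inter_{\F}(U_j')| \geq C'|X|$, which is exactly the required $C'$-expansion condition (recalling $5000C' = C$).

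The main task is to show that $|B_i| \leq 2n/C$ at termination. I argue by contradiction: let $t^*$ be the first step at which some $|B_i|$ exceeds $2n/C$; since the witness added at step $t^*$ has size at most $n/C$, $|B_i| \leq 3n/C$, while $|B_\ell| \leq 2n/C$ for $\ell \neq i$. Decompose $B_i = \bigcup_{j\in[4]} B_{i,j}$ according to the pair under which each witness was recorded; by pigeonhole, some $|B_{i,j}| > n/(2C)$. I then derive contradictory upper and lower bounds on $|N_G(B_{i,j}) \cap \inter_{\F}(U_j \setminus B_j)|$ (evaluating all $B_\ell$ at time $t^*$).

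For the upper bound, $B_{i,j}$ is a disjoint union of witnesses $X^{(t_k)}$, each satisfying $|N_G(X^{(t_k)}) \cap \inter_{\F}(U_j \setminus B_j^{(t_k-1)})| < C'|X^{(t_k)}|$ at the time of addition; since $U_j \setminus B_j$ only shrinks, $\inter_{\F}(U_j \setminus B_j) \subseteq \inter_{\F}(U_j \setminus B_j^{(t_k-1)})$, and summing over the (disjoint) witnesses gives $|N_G(B_{i,j}) \cap \inter_{\F}(U_j \setminus B_j)| < C'|B_{i,j}| \leq 3n/5000$. For the lower bound, \Cref{obs:interior} gives $|\inter_{\F}(U_j \setminus B_j)| \geq n/500 - 3|B_j| \geq n/500 - 9n/C$, and $B_{i,j} \subseteq U_i$ is disjoint from $\inter_{\F}(U_j \setminus B_j) \subseteq U_j \setminus B_j$ (using $U_i \cap U_j = \emptyset$ when $j \neq i$, and $B_{i,i} \subseteq B_i$ being disjoint from $U_i \setminus B_i$ when $j = i$). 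Applying property (b) of the $C$-expander dually, the set of vertices of $\inter_{\F}(U_j \setminus B_j)$ with no neighbor in $B_{i,j}$ must have size less than $n/(2C)$ (otherwise it combines with $B_{i,j}$ to contradict (b), both sets having size at least $n/(2C)$). Hence $|N_G(B_{i,j}) \cap \inter_{\F}(U_j \setminus B_j)| \geq n/500 - 9n/C - n/(2C)$, which for $C$ sufficiently large strictly exceeds $3n/5000$, the desired contradiction.

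The main obstacle is controlling the loss of $3|B_j|$ in $|\inter_{\F}(U_j')|$ coming from \Cref{obs:interior}: the $n/500$ buffer built into the hypothesis $|\inter_{\F}(U_j)| \geq n/500$ is precisely what absorbs this loss once $C$ is large, and the fact that $C' = C/5000$ is much smaller than the absolute constant $1/500$ from the hypothesis is what keeps the upper bound $C'|B_{i,j}|$ small enough to clash with the lower bound produced by property (b).
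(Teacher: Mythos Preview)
Your proof is correct and takes essentially the same approach as the paper: iteratively remove ``bad witnesses'' from the $U_i$'s and use property~(b) of the $C$-expander together with \Cref{obs:interior} to show the removed sets stay small; the paper phrases this as a single maximality argument (choosing $B_{i,j}$'s with $\sum|B_{i,j}|$ maximal) and bounds each $|B_{i,j}|<n/(2C)$ separately, whereas you run an explicit greedy process and bound $|B_i|\le 2n/C$ directly, but the core computation is the same. One cosmetic point: you write ``for $C$ sufficiently large'' at the end, but since the hypothesis is $C\ge 10^5$ you should check (as you effectively can: $n/500-9.5n/C\ge 7n/5000>3n/5000$) that the arithmetic goes through at that threshold.
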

\begin{proof}
Pick sets $B_{i,j}\subseteq U_i$, $i,j\in [4]$, satisfying the following:
\begin{enumerate}[label = (\arabic{enumi})]
\item $|B_{i,j}|\leq 2n/C$ for all $i,j\in [4]$.
\item $\left|N(B_{i,j})\cap \inter_{\F}\left(U_j \setminus \bigcup_{t=1}^{4}B_{j,t} \right) \right|\leq C'|B_{i,j}|$ for all $i,j\in [4]$.
\item $\sum_{i,j\in [4]} |B_{i,j}|$ is as large as possible, subject to (1) and (2).
\end{enumerate}
First note that this is indeed possible, as the sets $B_{i,j}=\emptyset$, $i,j\in [4]$, satisfy (1) and (2).
Let $B:=\bigcup_{i,j\in [4]} B_{i,j}$.
\begin{claim}\label{clm:Bijsmall}
For each $i,j\in [4]$, $|B_{i,j}|< n/2C$.
\end{claim}
\begin{proof}
Suppose otherwise. Then, the definition of a $C$-expander implies that $B_{i,j}$ is adjacent to all but at most $n/2C$ vertices of $V(G)\setminus B_{i,j}$ and, thus, to all but at most $n/2C$ vertices in $\inter_{\F}(U_j\setminus B)$. Hence, 
\begin{align*}    
|N(B_{i,j}) \cap \inter_{\F}(U_j\setminus B)|&\geq |\inter_{\F}(U_j\setminus B)| - n/2C \geq |\inter_{\F}(U_j)| - 3|B \cap U_j|- n/2C \\&\geq |\inter_{\F}(U_j)| - 3\cdot 16 \cdot 2n/C - n/2C \geq (10C'-96.5)n/C > C'|B_{i,j}|,
\end{align*}
contradicting (2). Note that in the previous inequalities we used Observation \ref{obs:interior}, and that $|\inter_{\F}(U_i)|\geq n/500$ for each $i\in [4]$ and $C'=C/5000$.
\renewcommand{\qedsymbol}{$\boxdot$}
\end{proof}
\renewcommand{\qedsymbol}{$\square$}
\noindent We now show that setting $U'_i := U_i \setminus B$ for each $i\in [4]$ gives the first item of the desired outcome of the lemma.
\begin{claim}
For each $i,j\in [4]$, $U_i\setminus B$ $C'$-expands into $\inter_{\F}(U_j\setminus B)$.
\end{claim}
\begin{proof}
Suppose for contradiction that there is some $i,j\in [4]$ for which there is some $X\subseteq U_i\setminus B$ with $|N(X)\cap \inter_{\F}(U_j\setminus B)|<C'|X|$ and $|X|\leq n/5000C'\leq  n/C$. Then, $|B_{i,j}\cup X| \leq |B_{i,j}| + |X| \leq 2n/C$ by Claim~\ref{clm:Bijsmall} and, furthermore,
\begin{align*}
|N(B_{i,j}\cup X)\cap \inter_{\F}(U_j\setminus B)|&\leq |N(B_{i,j})\cap \inter_{\F}(U_j\setminus B)|+|N(X)\cap \inter_{\F}(U_j\setminus B)|\\
&\leq C'|B_{i,j}| + C'|X| = C'|B_{i,j} \cup X|.
\end{align*}
For each $s,t\in [4]$, we have  
$|N(B_{s,t})\cap \inter_{\F}\left(U_t \setminus(B\cup X)\right)|\leq |N(B_{s,t})\cap \inter_{\F}(U_t\setminus B)|\leq C' |B_{s,t}|$.
Thus, replacing $B_{i,j}$ by $B_{i,j}\cup X$ in $\{B_{s,t}:s,t\in [4]\}$ gives a family of sets satisfying (1) and (2), but with a larger total number of vertices, contradicting (3).
\renewcommand{\qedsymbol}{$\boxdot$}
\end{proof}
\renewcommand{\qedsymbol}{$\square$}
Finally, setting $U_i'=U_i\setminus B$ for each $i\in [4]$ satisfies the lemma. Indeed, note that since $|B \cap U_i| \leq 4 \cdot n/2C$ for each $i\in [4]$ by Claim~\ref{clm:Bijsmall}, we have that $|U'_i| \geq |U_i| - 2n/C$.
\end{proof}
\section{Rotations of linear forests}\label{sec:rotations}
In this section we give our methods for reducing the number of paths in a spanning linear forest in an expander, by rotating the linear forest.
After defining our rotations and making some simple observations in Section~\ref{sec:rot:simple}, we will prove some intermediary results for these rotations in Section~\ref{sec:rot:lesssimple} before proving the key result of this section, Lemma~\ref{lem:generalpathforestrot}, in Section~\ref{sec:mainrotation}.

\subsection{Rotation definition and simple observations}\label{sec:rot:simple}
 We first define rotations of linear forests, which can be compared to rotations of paths by lining up the paths of the linear forest as in Figure~\ref{fig:rotation}. Rotations of linear forests can, however, look quite different depending on whether the edge used to rotate is between two different paths, or between vertices in the same path, and on whether these paths are isolated vertices, or not; the different possiblities are illustrated in Figure~\ref{fig:rot}.
\begin{defn}
    Let $P_1$ and $P_2$ be two (possibly equal) paths of a linear forest $\F$ in a graph $G$. Let $x$ be an endpoint of $P_1$ and let $z$ be a neighbour (in $G$) of $x$ in $P_2$. Let $y$ be a vertex adjacent to $z$ in $P_2$ if $|P_2|\geq 1$ (if $P_1=P_2$, then let $y$ be the vertex closer to $x$ in $P_1$, where we may have $y=x$), and let $y=z$ otherwise. Then, the linear forest $\F'$ obtained from $\F$ by removing the edge $yz$ (if $y\neq z$) and adding the edge $xz\in E(G)$ is called a \emph{$1$-rotation of $\F$ in $G$}.

 We call the vertex $x$ the \emph{old endpoint}, $y$ the \emph{new endpoint} (indeed, note that $y \in \End(\F')$), and $z$ the \emph{pivot}. The edge $zy$ is called the \emph{broken edge} of the rotation. 
We will refer to the process of removing $yz$ (if it exists) and adding $xz$ to $\F$ as a 1-rotation, as well as the resulting linear forest $\F'$, for example, saying that, by performing a 1-rotation on $\F$ with old endpoint $x$, pivot $z$ and new endpoint $y$, we get the 1-rotation $\F'$ of $\F$.
\end{defn}
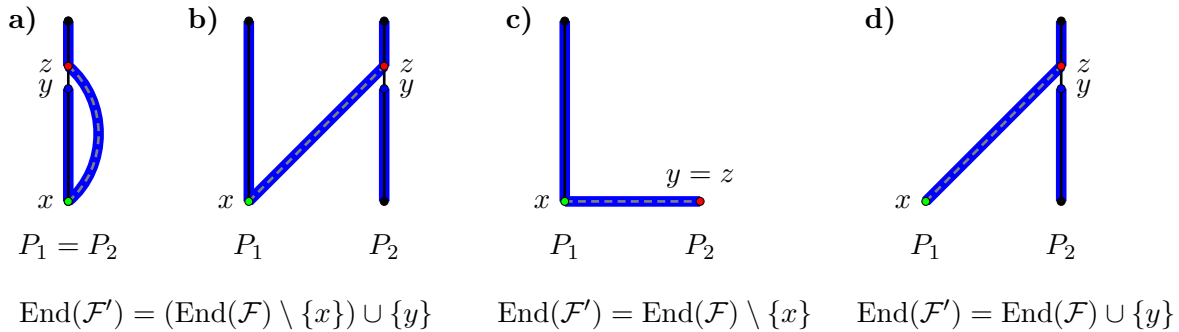
\begin{figure}[h]
\begin{center}
\begin{tikzpicture}[scale = 0.6]
\begin{scope}[xshift=1cm]
\draw [line width=4pt, color=blue, opacity=0.3] (-2,0)-- (1,3);
\draw [line width=4pt, color=blue, opacity=0.3] (1,4)-- (1,3);
\draw [line width=4pt, color=blue, opacity=0.3] (-2,4)-- (-2,0);
\draw [line width=4pt, color=blue, opacity=0.3] (1,2.5)-- (1,0);
\node[color=black]  at (-2.5,0) {$x$};
\node[color=black]  at (-2,-1) {$P_1$};
\node[color=black]  at (1,-1) {$P_2$};
\draw [fill=black] (-2,4) circle (2.5pt);
\draw [line width=1pt] (-2,0)-- (-2,4);
\draw [fill=black] (1,0) circle (2.5pt);
\draw [fill=black] (1,4) circle (2.5pt);
\node[color=black]  at (-3,4) {\textbf{b)}};
\draw [line width=1pt] (1,0)-- (1,2.5);
\draw [line width=1pt] (1,4)-- (1,3);
\draw [line width=1pt,densely dashed,black!50] (-2,0)-- (1,3);
\draw [line width=1pt] (1,3) -- (1,2.5);
\draw [fill=red] (1,3) circle (2.5pt);
\draw [fill=blue] (1,2.5) circle (2.5pt);
\draw [fill=green] (-2,0) circle (2.5pt);
\node[color=black]  at (1.5,3) {$z$};
\node[color=black]  at (1.5,2.5) {$y$};

\end{scope}

\draw [line width=4pt, color=blue, opacity=0.3] (14,0)-- (17,3);
\draw [line width=4pt, color=blue, opacity=0.3] (17,4)-- (17,3);
\draw [line width=4pt, color=blue, opacity=0.3] (17,2.5)-- (17,0);
\node[color=black]  at (13.5,0) {$x$};
\node[color=black]  at (14,-1) {$P_1$};
\node[color=black]  at (17,-1) {$P_2$};
\draw [fill=black] (17,0) circle (2.5pt);
\draw [fill=black] (17,4) circle (2.5pt);
\node[color=black]  at (13,4) {\textbf{d)}};
\draw [line width=1pt] (17,0)-- (17,4);
\draw [line width=1pt,densely dashed,black!50] (14,0)-- (17,3);
\draw [fill=red] (17,3) circle (2.5pt);
\draw [fill=blue] (17,2.5) circle (2.5pt);
\draw [fill=green] (14,0) circle (2.5pt);
\node[color=black]  at (17.5,3) {$z$};
\node[color=black]  at (17.5,2.5) {$y$};

\draw [line width=4pt, color=blue, opacity=0.3] (6,0)-- (9,0);
\draw [line width=4pt, color=blue, opacity=0.3] (6,4)-- (6,0);
\node[color=black]  at (5.5,0) {$x$};
\node[color=black]  at (6,-1) {$P_1$};
\node[color=black]  at (9,-1) {$P_2$};
\draw [fill=black] (6,4) circle (2.5pt);
\node[color=black]  at (5,4) {\textbf{c)}};
\draw [line width=1pt] (6,0)-- (6,4);
\draw [line width=1pt,densely dashed,black!50] (6,0)-- (9,0);
\draw [fill=red] (9,0) circle (2.5pt);
\draw [fill=green] (6,0) circle (2.5pt);
\node[color=black]  at (9,0.5) {$y = z$};

\draw [line width=4pt, color=blue, opacity=0.3] (-5,0) to [bend right = 50] (-5,3);
\draw [line width=4pt, color=blue, opacity=0.3] (-5,4)-- (-5,3);
\draw [line width=4pt, color=blue, opacity=0.3] (-5,2.5)-- (-5,0);
\node[color=black]  at (-6,4) {\textbf{a)}};
\node[color=black]  at (-5.5,0) {$x$};
\node[color=black]  at (-5,-1) {$P_1=P_2$};
\draw [fill=black] (-5,4) circle (2.5pt);
\draw [line width=1pt] (-5,0) -- (-5,4);
\draw [fill=black] (-5,0) circle (2.5pt);
\draw [fill=black] (-5,4) circle (2.5pt);

\draw [line width=1pt,densely dashed,black!50] (-5,0) to [bend right = 50] (-5,3);
\draw [fill=red] (-5,3) circle (2.5pt);
\draw [fill=blue] (-5,2.5) circle (2.5pt);
\draw [fill=green] (-5,0) circle (2.5pt);
\node[color=black]  at (-5.5,3) {$z$};
\node[color=black]  at (-5.5,2.5) {$y$};

\node[color=black]  at (-1.5,-2.5) {$\End  (\mathcal{F}') = \left( \End  (\mathcal{F}) \setminus \{x\} \right) \cup \{y\} $};

\node[color=black]  at (8,-2.5) {$\End  (\mathcal{F}') =  \End  (\mathcal{F}) \setminus \{x\}$};

\node[color=black]  at (16,-2.5) {$\End  (\mathcal{F}') =  \End  (\mathcal{F}) \cup \{y\} $};

\end{tikzpicture}
\caption{Different 1-rotations of $\F=\{P_1,P_2\}$ to get $\F'$ by removing the edge $yz$ (if it exists) and adding the dashed edge $xz$, with the new paths highlighted in blue and the change in endvertices given underneath. In \textbf{a)} $P_1 = P_2$, and in \textbf{b)--d)} $P_1\neq P_2$. In \textbf{b)} neither $x$ or $z$ are isolated, in \textbf{c)} $y=z$ is isolated but not $x$ and in \textbf{d)} $x$ is isolated but not $z$.} 

\vspace{-0.3cm}

\label{fig:rot}
\end{center}
\end{figure}

\noindent As in P\'osa's original rotation-extension method on paths, we will repeatedly rotate linear forests to find many different new endvertices. For this, we will now define a \emph{$k$-rotation}.
\begin{defn}
A \emph{$k$-rotation} of a forest $\F$ in a graph $G$ is any forest which is obtained by a sequence of $k$ consecutive $1$-rotations starting with $\F$ in $G$, where the old endpoint of each $1$-rotation is the new endpoint of the previous one and the pivot is at least at distance $3$ in $\F$ from the old endpoint of the first $1$-rotation and the pivot in the $i$-th rotation for all $i<k$ (and thus the broken edge belongs to $\F$ as well). We say that the $k$-rotation has \emph{old endpoint} $x$, or \emph{starts at $x$}, and \emph{new endpoint} $y$ if $x$ is the old endpoint of the first $1$-rotation and $y$ is the new endpoint of the last $1$-rotation.

For a set $U\subseteq V(G)$ we denote by $E^k_U(v,\F)$ the set of new endpoints obtained by $i$-rotations of $\F$ starting from $v$ (i.e., with old endpoint $v$) with $i\leq k$, such that the pivots of each $1$-rotation are contained in $\inter_\F(U)$, so that each broken edge belongs to $\F[U]$. We call such rotations \emph{$(U,i)$-rotations}, and where $i$ is not specified call them \emph{$U$-rotations}. 
\end{defn}

\noindent Next we prove some simple results on $k$-rotations.
\begin{lem}\label{obs:endpoint change}
Let $\F'$ be a $(U,k)$-rotation of $\F$, with $k\geq 1$, starting with vertex $v$ and let $u$ be the new endpoint.  Then, the following holds.
\begin{enumerate}[label=\emph{(\Alph*)}]
    \item\label{obs:number of pivots} If $E^k_U(v,\F)\subseteq S\subseteq E^{k+1}_U(v,\F)$, then the set of pivots used in the $1$-rotations to get new endpoints in $S$ is of size at most $2|S|$. 
\end{enumerate}
\noindent Furthermore, if $\F$ has no isolated vertices, then the following also hold.
\begin{enumerate}[label=\emph{(\Alph*)}]\addtocounter{enumi}{1}
    
    \item\label{obs: new endpoint is maybe iso} The only possible isolated vertex in $\F'$ is $u$, which furthermore can only be isolated in $\F'$ if $u \in \text{End}(\F)$.

    \item\label{obs: endpoints} $\text{End}(\F') = \left(\text{End}(\F)\cup\{u\} \right) \setminus\{v\}$.
\end{enumerate}
\end{lem}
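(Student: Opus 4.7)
I would prove (A) by a direct neighbourhood-counting argument and (B), (C) by induction on $k$, using the pivot distance-$3$ condition throughout. For (A), the key observation is that every pivot $z$ appearing in a $1$-rotation that is part of a sequence of at most $k+1$ consecutive $1$-rotations reaching a new endpoint in $S$ lies in $\Gamma_\F(S)$. Indeed, by the distance-$3$ condition the broken edge of any such $1$-rotation lies in $\F$, so the new endpoint $y$ of that $1$-rotation is a $\F$-neighbour of $z$. The sandwich $E^k_U(v,\F) \subseteq S \subseteq E^{k+1}_U(v,\F)$ guarantees $y\in S$: intermediate new endpoints at steps $i\leq k$ lie in $E^k_U(v,\F)\subseteq S$, and the terminal endpoint of the sequence is in $S$ by assumption. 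Since $\F$ is a linear forest, $|\Gamma_\F(S)|\leq 2|S|$.

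For (B) and (C), I would induct on $k$. The base case $k=1$ is a short case analysis on a $1$-rotation applied to a forest with no isolated vertices: if $P_1=P_2$ the path is reshaped into a single new path (no isolated vertex created), while if $P_1\neq P_2$ the two nontrivial paths merge and re-split, with an isolated vertex in $\F'$ arising exactly when the new endpoint $u$ is an endpoint of $P_2$ --- in which case $u\in\End(\F)$. In every case $\End(\F')=(\End(\F)\setminus\{v\})\cup\{u\}$ is verified by inspection.

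For the inductive step, write the sequence as $\F=\F_0\to\cdots\to\F_k$ with pivots $z_1,\dots,z_k$ and new endpoints $y_1,\dots,y_k=u$. The central preliminary claim is: \emph{if $y_{k-1}\in\End(\F)$, then $y_{k-1}$ is isolated in $\F_{k-1}$}. The unique $\F$-neighbour of $y_{k-1}$ must be $z_{k-1}$, and the distance-$3$ condition prevents $y_{k-1}$ from having been a pivot or an old/new endpoint in any earlier rotation (any such role would force a pivot at $\F$-distance $\leq 2$ from $z_{k-1}$, or would require $y_{k-1}=v$, contradicting $v$'s $\F$-distance from $z_{k-1}$), so the edge $y_{k-1}z_{k-1}$ survives unchanged into $\F_{k-2}$ and is broken in step $k-1$. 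Analogous distance arguments give $u\neq v$ and $u\neq y_{k-1}$. One then splits on whether $y_{k-1}$ is isolated in $\F_{k-1}$: if not, then by the contrapositive $y_{k-1}\notin\End(\F)$ and by inductive (B) $\F_{k-1}$ has no isolated vertex, so the base case applies directly to $\F_{k-1}\to\F_k$ and substituting the inductive formula for $\End(\F_{k-1})$ yields the required $(\End(\F)\cup\{u\})\setminus\{v\}$; if so, then $y_{k-1}\in\End(\F)$ by inductive (B) and the final rotation starts from an isolated vertex, for which a direct check gives $\End(\F_k)=\End(\F_{k-1})\cup\{u\}$, and substituting once more yields the same formula. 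In either sub-case, isolation of $u$ in $\F_k$ forces $u\in\End(\F_{k-1})\setminus\{y_{k-1}\}\subseteq\End(\F)$, giving (B).

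The main obstacle is the preliminary claim that $y_{k-1}\in\End(\F)$ forces isolation of $y_{k-1}$ in $\F_{k-1}$; without it, the $(k-1)$-th rotation could strip a second endpoint from $\End(\F)$ in the inductive step, breaking the clean formula in (C). The whole induction thus hinges on lifting the local distance-$3$ pivot condition into a global statement about the rotation history of the single vertex $y_{k-1}$.
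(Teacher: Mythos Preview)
Your proof is correct and follows essentially the same route as the paper. Two minor remarks: in (A), a pivot that is isolated in $\F$ has no broken edge, so your claim that every pivot lies in $\Gamma_\F(S)$ needs the caveat that an isolated pivot equals its own new endpoint and hence lies in $S$ itself (the bound $2|S|$ survives, since isolated vertices of $S$ contribute no $\F$-neighbours); and for (B)/(C) the paper avoids your case-split by strengthening the inductive hypothesis for (B) to allow the old endpoint itself to be isolated, which folds the biconditional ``$u$ isolated in $\F'$ iff $u\in\End(\F)$'' --- essentially your preliminary claim --- directly into the induction and makes the inductive step uniform.
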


\begin{proof}
    For~\emph{\ref{obs:number of pivots}} note that every pivot used for a $i$-rotation in $S$ is adjacent to a vertex in $S$ in the forest $\F$, or is equal to a vertex in $S$ in the case this pivot is isolated in $\F$. This is because all broken edges in a $k$-rotation $\F'$ are always edges in the initial forest $\F$. As the maximum degree of the linear forest $\F$ is $2$, there are at most $2|S|$ pivots used.

    We prove part \emph{\ref{obs: new endpoint is maybe iso}} by induction on $k$ and under the weaker assumption that $\F$ has no isolated vertices \emph{except for, possibly, $v$}. For $k=1$, let $z$ be the pivot of the rotation, and note that $v$ is adjacent to $z$ in $\F'$, hence it is not isolated in $\F'$. Since $\F$ contains no other isolated vertices, note that the only possible isolated vertices in $\F'$ are $z$ and $u$, as $zu$ is the only deleted edge in $\F$. But $z$ is adjacent to $v$ in $\F'$, so only $u$ can be isolated in $\F$, and this happens if and only if it did not have any other adjacent vertex in $\F$ besides $z$, i.e.\ if and only if it is an endpoint of $\F$. 
    Suppose now the statement holds for $k-1\geq 1$. Consider a $k$-rotation $\F_k$ of $\F$ with old endpoint $v$ and new endpoint $u$. By definition of $k$-rotation, we get a $(k-1)$-rotation $\F_{k-1}$ of $\F$ with old endpoint $v$ and new endpoint $w$, such that $\F_k$ is a $1$-rotation of $\F_{k-1}$ with old endpoint $w$ and new endpoint $u$ and a pivot $z$. By the induction hypothesis, the only possible isolated vertex in $\F_{k-1}$ is $w$. 
    Applying the induction hypothesis to $\F_{k-1}$ and then to its $1$-rotation $\F_k$ implies that $u$ is the only possible isolated vertex in $\F_k$, and that this is the case if and only if $u$ is an endpoint of $\F_{k-1}$. As $z$ is at distance at most $3$ from all the previous pivots (used for $\F_{k-1}$), we know that $u$ is an endpoint of $\F_{k-1}$ if and only if it is an endpoint of $\F$, which completes this part of the proof.

    For part~\emph{\ref{obs: endpoints}}, we also proceed by induction.
    Let $k=1$ and let $z$ be the pivot of the $1$-rotation $\F'$. If $z$ is the neighbouring vertex of $v$ in $\F$, the statement trivially holds as $\F=\F'$ and $u=v$.    
    Otherwise, note that $v$ has precisely one neighbour in $\F$, while it has another neighbour $z$ in $\F'$, so $v$ is not an endpoint in $\F'$.
    On the other hand, the edge $zu$ is removed from $\F$, so $u$ has at most one neighbour in $\F'$, so it is an endpoint.
    Assume the statement holds for $k-1\geq 1$. Consider a $k$-rotation $\F_k$ of $\F$ with old endpoint $v$ and new endpoint $u$. As before, we get a $(k-1)$-rotation $\F_{k-1}$ of $\F$ with old endpoint $v$, new endpoint $w$, such that $\F_k$ is a $1$-rotation of $\F_{k-1}$ with old endpoint $w$ and new endpoint $u$ and a pivot $z$. By the induction hypothesis, we know that $\text{End}(\F_{k-1})=(\text{End}(\F)\cup\{w\})\setminus \{v\}$. We distinguish two cases. If $w$ is not an endpoint of $\F$, then by~\emph{\ref{obs: new endpoint is maybe iso}} it is not isolated in $\F_{k-1}$, and hence by applying the induction hypothesis to the $1$-rotation $\F_k$ of $\F_{k-1}$, we see that $\text{End}(\F_{k})=(\text{End}(\F_{k-1})\cup\{u\})\setminus \{w\}=(\text{End}(\F)\cup\{u\})\setminus \{v\}$. In the second case, when $w$ is an endpoint of $\F$, again by~\emph{\ref{obs: new endpoint is maybe iso}} we have that $w$ is the only isolated vertex in $\F_{k-1}$. Thus, the $1$-rotation $\F_k$ of $\F_{k-1}$ still contains $w$ as an endpoint, while similarly to the case $k=1$ the only new created endpoint is $u$. Hence $\text{End}(\F_{k})=((\text{End}(\F)\cup\{w\})\setminus \{v\})\cup \{u\}=(\text{End}(\F)\setminus\{v\})\cup \{u\}$ as required.
\end{proof}

\subsection{Intermediate results on rotations}\label{sec:rot:lesssimple}

As mentioned in the proof sketch, a key part of P\'osa's rotation-extension technique is to show that performing rotations iteratively in an expander creates a linear-sized set of potential endvertices. We now show the corresponding result for our linear forest rotations.
\begin{lem}\label{Lem_endpoints_in_expander}
Let $C>100$ and let $\F$ be a linear forest in a graph $G$, $v$ an endpoint of $\F$,  and $U$ an $(\F,C)$-expander containing $v$. Then $|E^{k}_{U}(v,\F)|\geq |U|/10^5$ for $k=2\log_C n$.
\end{lem}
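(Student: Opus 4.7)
The plan is to implement a linear-forest analogue of the standard P\'osa rotation growth argument. I will define $S_i := E^i_U(v,\F) \cup \{v\}$ for $i \geq 0$, so that $v \in S_0$ and $S_i \subseteq S_{i+1}$, which reduces the task to showing $|S_k| > |U|/10^5$. The core step will be a growth claim: if $|S_i| < |U|/10^5$, then $|S_{i+1}| \geq \min\{\tfrac{C}{3}|S_i|,\, |U|/10^5\}$.

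To establish this growth claim I would fix a subset $S_i' \subseteq S_i$ of size $\min\{|S_i|,\, \lfloor |U|/(5000C)\rfloor\}$, so that $U$ being an $(\F,C)$-expander gives $|N_G(S_i') \cap \inter_\F(U)| \geq C|S_i'|$. Let $T$ denote the set of all pivots ever used across rotation sequences reaching some vertex of $S_i$; by Observation~(A) of Lemma~\ref{obs:endpoint change}, $|T| \leq 2|S_i|$. Since $\F$ has maximum degree $2$, the set $B$ of vertices at $\F$-distance $\leq 2$ from $T \cup \{v\}$ has $|B| \leq 5(|T|+1) \leq 10|S_i|+5$. Call $w \in (N_G(S_i')\cap \inter_\F(U)) \setminus B$ \emph{good}; then there are at least $C|S_i'|-10|S_i|-5$ good vertices. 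Any good $w$ is adjacent in $G$ to some $u \in S_i'$ and is at $\F$-distance $\geq 3$ from $v$ and from every pivot of any previous rotation. Moreover, every edge that a previous $1$-rotation added or removed has both endpoints at $\F$-distance $\leq 1$ from $T \cup \{v\}$ (added edges connect a pivot to the preceding new endpoint, which is itself a neighbor of an earlier pivot, and deleted edges connect a pivot to an $\F$-neighbor of it), so the $\F$-neighborhood of $w$ has remained intact throughout. Consequently, the $1$-rotation with old endpoint $u$ and pivot $w$ legitimately extends the rotation reaching $u$ to an $(i+1)$-rotation in $E^{i+1}_U(v,\F)$, producing a new endpoint that is a neighbor of $w$ in $\F$, or $w$ itself if $w$ is $\F$-isolated. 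Since at most two good vertices can yield any particular new endpoint $y$ (the at most two $\F$-neighbors of $y$, or $y$ itself when $y$ is $\F$-isolated), the number of distinct new endpoints produced is at least $(C|S_i'|-10|S_i|-5)/2$, and subtracting at most $|S_i|$ for collisions with $S_i$ gives
\[
|S_{i+1}| \;\geq\; \tfrac{1}{2}\bigl(C|S_i'|-10|S_i|-5\bigr).
\]

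Plugging in, when $|S_i| \leq |U|/(5000C)$ and $S_i' = S_i$ this becomes $|S_{i+1}| \geq (C/3)|S_i|$ for $C\geq 100$; when $|U|/(5000C) < |S_i| < |U|/10^5$ and $|S_i'| = \lfloor |U|/(5000C)\rfloor$ the bound gives $|S_{i+1}| \geq |U|/20000 > |U|/10^5$. Iterating the growth claim from $|S_0|=1$ therefore yields $|S_k| \geq \min\{(C/3)^k,\, |U|/10^5\}$; and since $(C/3)^2 \geq C$ for $C \geq 9$, we get $(C/3)^{2\log_C n} \geq C^{\log_C n} = n \geq |U|/10^5$, which would complete the proof. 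I expect the main subtlety to lie in rigorously verifying that the $\F$-distance-$3$ condition on $w$ really does preserve the local $\F$-structure at $w$ throughout the rotation history — i.e.\ that no previous $1$-rotation has created or removed any $\F$-edge incident to $w$ — since this is what guarantees that the extended rotation is well-defined and the resulting new endpoint lies in $V(\F)$ and genuinely contributes to $E^{i+1}_U(v,\F)$.
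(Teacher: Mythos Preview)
Your proposal is correct and follows essentially the same approach as the paper's proof: both implement the P\'osa-type growth argument, bounding the number of forbidden pivots via Observation~\emph{\ref{obs:number of pivots}}, using $(\F,C)$-expansion of $U$ on a set of size at most $|U|/5000C$, halving to account for two pivots possibly yielding the same new endpoint, and iterating the resulting factor-$(C/3)$ growth up to $k=2\log_C n$. Your write-up is in fact slightly more careful than the paper's in tracking the distance-$3$ condition from $v$ (not just from earlier pivots) and in spelling out why the $\F$-neighbourhood of a good pivot $w$ is unchanged in the rotated forest; the paper absorbs this into the parenthetical in the definition of $k$-rotation and is otherwise terse about it.
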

\begin{proof}
    Suppose, to the contrary, that $|E^{k}_{U}(v,\F)|< |U|/10^5$.  For each $i\in [k-1]$, we have $|E^{i}_{U}(v,\F)|\leq |E^{k}_{U}(v,\F)|< |U|/10^5$.
        Now,  
    for each $i\in [k-1]$, and each pair $(x,y)$ with $x\in E^i_U(v,\F), y \in \text{int}_{\F}(U)$ and $xy\in E(G)$, $y$ is a good candidate to use as a pivot for a 1-rotation of a linear forest which has $x$ as an endvertex, created by rotating $\F$ up to $k-1$ times. However, by our definition, we need to have that this pivot is at least at distance $3$ in $\F$ from all the previous pivots in the same rotations. In total, though, as we have  at most $2|E^{i}_{U}(v,\F)|$ pivots used by part~\emph{\ref{obs:number of pivots}} of \Cref{obs:endpoint change}, this will rule out at most  $10|E^{i}_{U}(v,\F)|$ potential pivots $y$.   
Finally, we note that two pivots in $N(E^{i}_{U}(v,\F)) \cap \text{int}_{\F}(U)$ may possibly correspond to the same new endpoint. Therefore, we have 
\begin{equation}\label{eqn:Ek1}
|E^{i+1}_U(v,\F)|\geq \frac{|N(E^{i}_{U}(v,\F)) \cap \text{int}_{\F}(U)|-10|E^{i}_{U}(v,\F)|}{2}.
\end{equation}
If $|E^{i}_{U}(v,\F)|\geq |U|/5000C$, then, by considering a subset $S\subseteq E^{i}_{U}(v,\F)$ of size $|U|/5000C$, we have that since $U$ is an $(\F,C)$-expander, $|N(S) \cap \text{int}_{\F}(U)| \geq C|S|$ and so, $|N(E^{i}_{U}(v,\F)) \cap \text{int}_{\F}(U)|\geq C|S|-|E^{i}_{U}(v,\F)|\geq |U|/5000-|U|/10^5\geq 9|U|/10^5 + 10|E^{i}_{U}(v,\F)|$, where we are using that $|E^{i}_{U}(v,\F)| \leq |U|/10^5$. Then, by \eqref{eqn:Ek1}, $|E^{i+1}_U(v,\F)|\geq |U|/10^5$, a contradiction. Therefore, $|E^{i}_{U}(v,\F)|\leq |U|/5000C$, whence we have $|N(E^{i}_{U}(v,\F)) \cap \text{int}_{\F}(U)|\geq C|E^{i}_{U}(v,\F)|$ as $U$ is an $(\F,C)$-expander, so that \eqref{eqn:Ek1} implies
\[
|E^{i+1}_U(v,\F)|\geq \frac{C|E^{i}_{U}(v,\F)|}{3}.
\]
As this holds for every $0\leq i\leq k-1$, we have $|E^k_U(v,\F)|\geq (C/3)^k>n$, a contradiction.
\end{proof}

\noindent The next lemma essentially says that if a linear forest $\F$ in an $n$-vertex expander can be split into two so that rotations can be done on each subcollection of paths independently starting from two vertices $x$ and $y$ respectively, then we can alter $O(\log n)$ edges in $\F$ to decrease the number of paths by 1 while losing exactly $x$ and $y$ as endvertices, as follows.

\begin{lem}\label{lem:rotationforclosing}
Let $C>C'>10^{10}$ and let $G$ be an $n$-vertex $C$-expander and $\mathcal{F}$ a spanning linear forest in $G$. Suppose $\F$ has no isolated vertices and let $x,y$ be two endpoints of $\F$. Let $X,Y$ be $(\mathcal{F},C')$-expanders of size at least $0.0001n$ such that $x \in X,y \in Y$, no path in $\mathcal{F}$ intersects both $X$ and $Y$, and there is no vertex in $(X\setminus \{x\})\cup (Y\setminus \{y\})$ which is an endpoint in $\F$. Then, there is a spanning linear forest $\mathcal{F'}$ in $G$ for which $|E(\F)\Delta E(\F')|=O(\log n)$, $\End (\mathcal{F'}) = \End (\mathcal{F}) \setminus \{x,y\}$, and $\F'$ contains no isolated vertices.
\end{lem}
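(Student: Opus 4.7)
My plan is to perform two sequences of P\'osa-type linear forest rotations, one inside $X$ starting from $x$ and the other inside $Y$ starting from $y$, use these to produce two large sets of candidate new endpoints, locate an edge between them via the expansion condition (b) of Definition~\ref{def:expander}, and then close using this edge to merge two paths into one.

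Since $\F$ has no isolated vertices and no path of $\F$ intersects both $X$ and $Y$, it follows that $X \cap Y = \emptyset$ (otherwise a common vertex would lie on some path meeting both). Applying Lemma~\ref{Lem_endpoints_in_expander} in the $(\F,C')$-expander $X$ with $k := 2\log_{C'} n$ rotations, I obtain a set $A := E^{k}_{X}(x,\F) \subseteq X$ of size at least $|X|/10^{5} \geq n/10^{9}$, and analogously $B := E^{k}_{Y}(y,\F) \subseteq Y$ of size at least $n/10^{9}$. By hypothesis, the only endpoint of $\F$ in $X$ is $x$ and the only one in $Y$ is $y$, so $A \setminus \{x\}$ and $B \setminus \{y\}$ remain disjoint sets of size $\geq n/2C$. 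Expansion condition~(b) then yields an edge $ab \in E(G)$ with $a \in A \setminus \{x\}$ and $b \in B \setminus \{y\}$, and in particular $a, b \notin \End(\F)$.

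By the definition of $A$, some sequence of at most $k$ $X$-rotations starting at $x$ transforms $\F$ into a linear forest $\F_A$ whose final new endpoint is $a$. Each such rotation uses a pivot in $\inter_\F(X)$, so every edge added or removed has both endpoints in $X$; in particular $\F_A$ agrees with $\F$ on every path that avoids $X$, and on all $Y$-paths. Consequently, $\inter_{\F_A}(Y) = \inter_\F(Y)$, so $Y$ remains an $(\F_A,C')$-expander, and the analogous sequence of $Y$-rotations starting at $y$ (which uses only edges with both endpoints in $Y$) can be performed on $\F_A$, producing a linear forest $\F''$ with $b$ as the final new endpoint on the $Y$-side. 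Two applications of Lemma~\ref{obs:endpoint change}\emph{(C)} give $\End(\F'') = (\End(\F) \setminus \{x,y\}) \cup \{a,b\}$, and two applications of Lemma~\ref{obs:endpoint change}\emph{(B)}, together with $a,b \notin \End(\F)$ and $a \neq b$, show that $\F''$ has no isolated vertices.

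Finally, define $\F' := \F'' + ab$. Since $a \in X$ and $b \in Y$ are endpoints of two different paths of $\F''$ (as $X \cap Y = \emptyset$), the edge $ab$ merges those two paths into one, creates no isolated vertex, and removes $a,b$ from the endpoint set, yielding $\End(\F') = \End(\F) \setminus \{x,y\}$. Since each 1-rotation modifies at most two edges of $\F$ and we use at most $2k$ of them, plus one extra edge, $|E(\F) \triangle E(\F')| = O(\log n)$. The main subtlety is that the two rotation sequences must be executable independently: the hypothesis that no path of $\F$ meets both $X$ and $Y$ is what guarantees this, and the exclusion of the endpoints $x,y$ from $X,Y$ is crucial to apply Lemma~\ref{obs:endpoint change}\emph{(B)} and ensure that no intermediate rotation falls into cases (c) or (d) of Figure~\ref{fig:rot}, so that path counts are preserved throughout and the final count drops by exactly one upon adding $ab$.
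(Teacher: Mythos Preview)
Your proof is correct and follows essentially the same approach as the paper's: apply Lemma~\ref{Lem_endpoints_in_expander} inside each of $X$ and $Y$ to obtain large endpoint sets, use condition~(b) of the $C$-expander definition to find a connecting edge, observe that the two rotation sequences do not interfere because no path of $\F$ meets both $X$ and $Y$, and then close with that edge. Your explicit exclusion of $x,y$ when selecting $a,b$ and your careful invocation of Lemma~\ref{obs:endpoint change}\emph{(B)},\emph{(C)} to rule out isolated vertices are minor elaborations of the paper's argument rather than a different route.
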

\begin{proof}
Let $\mathcal{F}_X$ be the collection of paths in $\mathcal{F}$ which intersect $X$ and $\mathcal{F}_Y$ be the collection of paths in $\mathcal{F}$ which intersect $Y$, so that $\mathcal{F}_X \cap \mathcal{F}_Y = \emptyset$. By Lemma~\ref{Lem_endpoints_in_expander}, there exists a subset $X' \subseteq X$ of size at least $|X|/10^5 > n/C$ such that for all $x' \in X'$ there is an $X$-rotation $\mathcal{F}_{x'}$ of $\mathcal{F}$ with new endpoint $x'$. Note crucially that since $\mathcal{F}_{x'}$ is an $X$-rotation, we have that $\mathcal{F}_Y \subseteq \mathcal{F}_{x'}$. Similarly, there is a subset $Y' \subseteq Y$ of size at least $|Y|/10^5 > n/C$ such that for all $y' \in Y'$ there is a $Y$-rotation $\mathcal{F}_{y'}$ of $\mathcal{F}$ with new endpoint $y'$. Let $x'y'$ be an edge between $X'$ and $Y'$, which exists as $G$ is a $C$-expander. 

Now, the forest $\mathcal{F}_{x'}$ is such that $\mathcal{F}_{Y} \subseteq \mathcal{F}_{x'}$. Therefore, any $Y$-rotation of $\mathcal{F}$ is identical when restricted to $Y$ to any $Y$-rotation of $\mathcal{F}_{x'}$ with the same change in endpoints. Therefore, since $y'$ is a new endpoint created by a $Y$-rotation $\mathcal{F}_{y'}$ of $\mathcal{F}$, there is then a linear forest $\mathcal{F}_{x',y'}$ which is obtained by performing the $X$-rotation
and $Y$-rotation consecutively. Note that $\mathcal{F}_{x',y'}$ has no isolated vertices by \Cref{obs:endpoint change}~\emph{\ref{obs: new endpoint is maybe iso}}. In particular, then, $x',y'$ are not isolated in $\mathcal{F}_{x',y'}$ and so adding the edge $x'y'$ to  $\mathcal{F}_{x',y'}$ we obtain the forest $\F'$ which has neither $x'$ or $y'$ as endvertices. More precisely, $\End (\mathcal{F'}) = \End (\mathcal{F}) \setminus \{x,y\}$. Since $\F_x$ and $\F_y$ are $O(\log n)$-rotations, we conclude that $\F$ and $\F'$ differ in at most $O(\log n)$ edges, i.e., $|E(\F)\Delta E(\F')|=O(\log n)$.
\end{proof}

\noindent Unfortunately, even when we can split our linear forest into two subcollections of paths with many vertices, we cannot guarantee that we can rotate within them independently (as for Lemma~\ref{lem:rotationforclosing}) from two arbitrary endvertices $x$ and $y$. Therefore, in our proof, we will first need to rotate in the whole linear forest to replace $x$ and $y$ by vertices for which we can do this. This is carried out in Section~\ref{sec:mainrotation}, using the following lemma.

\begin{lem}\label{lem:mainrotation}
Let $C\geq C'>10^6$ and let $G$ be an $n$-vertex $C$-expander which contains a linear forest $\F$ with no isolated vertices. Let $U,V\subseteq V(G)$ be two subsets of vertices. Let $u\in U$ be an endpoint of $\F$ and $U$ an $(\F,C')$-expander with $|U|\geq 10^5n/C$. Let $V$ satisfy $|\inter_\F(V)|\geq 11n/C$ and contain no endpoints of $\F$.
Then, there is an $(U \cup V,O(\log n))$-rotation $\F'$ of $\F$ and $v \in V$ such that $ \End \left(\mathcal{F}' \right)= ( \End  \left(\mathcal{F} \right) \setminus $  $ \{u\}) \cup \{v\}$.
Furthermore, we have that $\mathcal{F}'[V \setminus v] = \mathcal{F}[V \setminus v]$, all the edges broken in the successive $1$-rotations except the last one are not in $\F[V]$, and $\F'$ has no isolated vertices.
\end{lem}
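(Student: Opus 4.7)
The plan is to build the rotation in two stages: first, perform $k_0 := 2\log_{C'} n = O(\log n)$ $1$-rotations confined to $U$ starting at $u$, producing a large pool of candidate endpoints inside $U$; second, append a single carefully chosen $1$-rotation whose pivot lies in $\inter_\F(V)$, which transfers the moving endpoint into $V$.

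For the first stage, I apply \Cref{Lem_endpoints_in_expander} to the $(\F,C')$-expander $U$ at the endpoint $u$ to obtain the set $E^{k_0}_U(u,\F)$ of new endpoints reachable by $(U,\leq k_0)$-rotations, of size at least $|U|/10^5 \geq n/C$. I pick a subset $E$ of size exactly $\lceil n/C\rceil$ and, for each $u'\in E$, fix one rotation sequence $u = x_0, x_1, \ldots, x_t = u'$ with pivot set $P(u') \subseteq \inter_\F(U)$ of size at most $k_0$. For the final $1$-rotation I want to find some $u' \in E$ together with a pivot $z \in \inter_\F(V)$ such that $u'z \in E(G)$ and $z$ is at $\F$-distance at least $3$ from $\{u\}\cup P(u')$; the new endpoint will then be the $\F$-neighbour $v$ of $z$, which lies in $V$ because $z \in \inter_\F(V)$.

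The main idea is to bound the forbidden pivot region \emph{globally} over all $u' \in E$. Letting $P := \bigcup_{u'\in E} P(u')$, \Cref{obs:endpoint change}\emph{(A)} applied with $S = E$ gives $|P| \leq 2|E|$, so the set $F := B_\F(\{u\}\cup P, 2)$ of vertices within $\F$-distance $2$ of these pivots satisfies $|F| \leq 5(|P|+1) \leq 10n/C + 15$. Consequently $|\inter_\F(V) \setminus F| \geq 11n/C - 10n/C - 15 \geq n/2C$ for $n$ sufficiently large---this is precisely where the constant $11$ in the hypothesis is used. Applying Definition~\ref{def:expander}\emph{(b)} to the disjoint sets $E \setminus V$ and $\inter_\F(V) \setminus F$, each of size at least $n/2C$, yields an edge $u'z \in E(G)$. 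Since $F(u') := B_\F(\{u\} \cup P(u'), 2) \subseteq F$, the condition $z \notin F$ already ensures $z$ is at $\F$-distance at least $3$ from $\{u\} \cup P(u')$; moreover $u' \notin V$ ensures that the final broken edge $zv$ is the only broken edge lying in $\F[V]$. Performing the associated $1$-rotation then produces the desired $\F'$, which is a $(U \cup V, k_0 + 1)$-rotation of $\F$ with $k_0 + 1 = O(\log n)$.

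The remaining verifications are routine: \Cref{obs:endpoint change}\emph{(C)} gives $\End(\F') = (\End(\F) \setminus \{u\}) \cup \{v\}$, and \Cref{obs:endpoint change}\emph{(B)} together with the fact that $v \in V$ is not an endpoint of $\F$ gives that $\F'$ has no isolated vertex. The identity $\F'[V \setminus v] = \F[V \setminus v]$ follows from the first $k_0$ pivots lying in $\inter_\F(U)$, provided each intermediate broken edge has an endpoint outside $V$. The main technical obstacle is exactly this last point: guaranteeing the first $k_0$ rotations avoid $\F[V]$ when $U$ and $V$ overlap substantially, and handling the edge case $|E \setminus V| < n/2C$. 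Both issues are addressed by restricting pivot choices to $\inter_\F(U) \setminus V$ (checking that the $C'$-expansion of $U$ survives this restriction up to a constant factor, in the same spirit as \Cref{lem:forestexpander}), so that the first stage occurs entirely disjointly from $V$.
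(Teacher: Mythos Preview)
Your main argument (finding an edge from a large set of $U$-rotation endpoints into $\inter_\F(V)$ minus a forbidden region) is essentially the paper's second case, but your treatment of the overlap between $U$-rotations and $V$ has a genuine gap. The proposed fix at the end—restricting pivots to $\inter_\F(U)\setminus V$ and checking that $C'$-expansion survives ``up to a constant factor''—cannot be justified under the stated hypotheses: there is no upper bound whatsoever on $|V\cap \inter_\F(U)|$, so nothing prevents $V$ from containing all of $\inter_\F(U)$, in which case $U$ has zero expansion into $\inter_\F(U)\setminus V$. The analogy with \Cref{lem:forestexpander} is misleading, since that lemma removes only $O(n/C)$ vertices, whereas here $V$ may have linear size. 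Moreover, even if the pivots avoided $V$, the resulting new endpoints are $\F$-neighbours of those pivots and need not avoid $V$, so restricting pivots does nothing to repair the edge case $|E\setminus V|<n/2C$.

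The paper resolves both issues simultaneously by a case split. One first checks whether $E^{k_0}_U(u,\F)\cap V\neq\emptyset$. If so, take the minimal $k$ with $E^{k}_U(u,\F)\cap V\neq\emptyset$ and any $v$ in this intersection; by minimality every intermediate new endpoint lies outside $V$, and since each broken edge has the new endpoint as one of its two vertices, all broken edges except possibly the last lie outside $\F[V]$—no extra $1$-rotation is needed. If instead $E^{k_0}_U(u,\F)\cap V=\emptyset$, then the entire endpoint set is already disjoint from $V$, which at once gives $|E\setminus V|=|E|\geq n/C$ and forces every intermediate broken edge to have an end outside $V$; your edge-finding step then goes through cleanly. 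One further technicality: your invocation of \Cref{obs:endpoint change}\emph{(A)} with $S=E$ an arbitrary $\lceil n/C\rceil$-subset of $E^{k_0}_U(u,\F)$ is not valid as stated, since that part requires $E^{k}_U(u,\F)\subseteq S\subseteq E^{k+1}_U(u,\F)$ for some $k$; the paper handles this by taking the minimal $k$ with $|E^{k}_U(u,\F)|\geq n/C$ and choosing $X$ with $E^{k-1}_U(u,\F)\subseteq X\subseteq E^{k}_U(u,\F)$.
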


\begin{proof}
First apply \Cref{Lem_endpoints_in_expander} to see that $|E^{2\log_{C'} n}_{U}(u,\F)|\geq |U|/10^5\geq n/C$. We consider two cases.

\vspace{0.3cm}
\noindent \textbf{Case 1: $E^{2\log_{C'} n}_{U}(u,\F) \cap V \neq \emptyset$.}
\vspace{0.3cm}

\noindent Let $k \leq 2\log_{C'} n$ be the smallest integer such that $E^{k}_{U}(u,\F) \cap V \neq \emptyset$ and $v \in E^{k}_{U}(u,\F) \cap V$. 
Let $\F'$ be the rotation corresponding to  the endpoint $v$.
 Then $\F'$ is a $O(\log n)$-rotation of $\F$. Furthermore, by the minimality of $k$, none of the new endpoints, except for $v$, in the successive $1$-rotations creating $\F'$ belong to $V$. Hence, none of the broken edges in these successive rotations are contained in $V$. Therefore, the only difference between $\F'[V]$ and $\F[V]$ occurs in the last $1$-rotation, with a broken edge incident to $v$. Hence, $\F'[V \setminus v] = \F[V \setminus v]$, as desired. It is easy to check that $\F'$ has no isolated vertices. Indeed, note that by \Cref{obs:endpoint change}~\emph{\ref{obs: new endpoint is maybe iso}}, the only candidate is possibly $v$, but since $V$ has no endpoints in $\F$, by \Cref{obs:endpoint change}~\emph{\ref{obs: new endpoint is maybe iso}} we are done.

\vspace{0.3cm}
\noindent \textbf{Case 2: $E^{2\log_{C'} n}_{U}(u,\F) \cap V = \emptyset$.}
\vspace{0.3cm}

\noindent Let $k \leq 2\log_{C'} n$ be the smallest integer such that $|E^{k}_{U}(u,\F)|\geq n/C$, and let $E^{k-1}_{U}(u,\F)\subseteq X\subseteq E^{k}_{U}(u,\F)$ satisfy $|X|=n/C$. 
Let $P$ be the set consisting of $u$ and all the pivots used in $1$-rotations creating endpoints in $X$ and note that by part~\emph{\ref{obs:number of pivots}} of \Cref{obs:endpoint change} we have $|P|\leq 2|X|+1$. Let $P^2$ be set of vertices which are at distance at most $2$ from $P$ in the forest $\F$. Now we have that $|\text{int}_{\F}(V)\setminus P^2|\geq 11n/C-5|P| \geq 11n/C-10n/C-5\geq n/2C$. 
Since $G$ is a $C$-expander, there is an edge $xy$ with $x \in X$ and $y \in \text{int}_{\F}(V)\setminus P^2$. Consider the $(U,i)$-rotation of $\F$ for which $x$ is the last new endpoint. Combined with the $1$-rotation with old endpoint $x$, pivot $y$ and a new endpoint $v\in V$, we obtain a new rotation which satisfies two properties: its last new endpoint is in $V$, and all the previous pivots except maybe the last one are in $U$. Denote this rotation by $\F'$.
    
The above argument shows that the obtained $i$-rotation $\F'$ satisfies $i\leq 2\log_{C'} n+1<\log n$. Note that, by construction, all the pivots used to obtain $\F'$ are at least at distance $3$ in $\F$. Also note that since $\F'$ is a combination of a $(U,i)$-rotation and another $1$-rotation, such that the $(U,i)$-rotation uses no new endpoints in $V$, the only edge which is possibly broken in $\F[V]$ is in the last $1$-rotation when the edge $yv$ is broken. This immediately implies $\F'[V \setminus v]=\F[V \setminus v]$. Finally, again by \Cref{obs:endpoint change}~\emph{\ref{obs: new endpoint is maybe iso}} there are no isolated vertices in $\F'$.
\end{proof}


\subsection{The main rotation lemma}\label{sec:mainrotation}

The next lemma is one of our key results. As it is the most technical part of our proof, we will now outline its proof briefly. Given an $n$-vertex $C$-expander $G$ containing a spanning linear forest, with most of its vertices on paths that are not too long, in which $x$ and $y$ are endvertices, we wish to perform rotations on $\mathcal{F}$ until we have enough vertices potentially replacing $x$ and $y$ as endpoints that can use Definition~\ref{def:expander}b) to connect two paths so that we end up with one fewer path while losing exactly $x$ and $y$ as endpoints (see Lemma~\ref{lem:generalpathforestrot}).
Using Lemma~\ref{lem:mainrotation}, we can perform rotations on $\F$ to replace $x$ and $y$ each with linearly many different new endpoints. However, we want to do this independently, so that Definition~\ref{def:expander}b) can be applied. We manage this in two stages. First, we use rotations to replace $x$ and $y$ by two new endpoints $x_1$ and $x_2$ so that we can use rotations on two different sets of vertices ($U_1$ and $U_2$ respectively) to change $x_1$ or to change $x_2$. We then use this property to replace $x_1$ and $x_2$ by two new endpoints $x_3$ and $x_4$, so that these endpoints can be rotated independently by using vertices on two different sets of paths (by rotating with vertex sets $U_3$ and $U_4$,  whose vertices never appear together on the same path). The two sets of endpoints created by doing this will have some edge between them by Definition~\ref{def:expander}b), allowing us to join two paths together. However, this might create a cycle if $x_3$ and $x_4$ were endpoints of the same path. Therefore, instead, we first rotate to replace $x_1$ by some other vertex $h$ (lying in a fifth set $U_{\text{hop}}$) before performing one extra rotation to change $h$ to a new endpoint, $x_3$, where this brief `hop' allows us to ensure that $x_3$ is then not an endpoint of the same path as $x_4$.

\begin{lem}\label{lem:generalpathforestrot}
Let $C>10^{10}$, let $G$ be an $n$-vertex $C$-expander and let $\mathcal{F}$ be a spanning linear forest in $G$ with no isolated vertices and such that at least $0.1n$ vertices of $G$ belong to paths in $\F$ of lengths between $100$ and $\sqrt{n}$. Let also $x,y \in \End(\F)$. Then, there is a spanning linear forest $\mathcal{F}'$ in $G$ with no isolated vertices such that  $|E(\mathcal{F}) \Delta E(\mathcal{F}')| = O(\log n)$ and $\End (\mathcal{F}') = \End (\mathcal{F}) \setminus \{x,y\}$.
\end{lem}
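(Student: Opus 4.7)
Following the sketch preceding the statement, the plan is to partition the medium-length paths of $\F$ into five subcollections whose vertex sets $U_1,U_2,U_3,U_4,U_{\text{hop}}$ each have size at least $0.01n$, extract $(\F,C')$-expander subsets $U_i'\subseteq U_i$ via Lemma~\ref{lem:forestexpander}, and then use five successive applications of Lemma~\ref{lem:mainrotation} to transport $x,y$ into a pair $x_3\in U_3',\,x_4\in U_4'$ that lies on two distinct paths of the current linear forest. One application of Lemma~\ref{lem:rotationforclosing} then joins those two paths, losing exactly $x,y$ as endpoints and changing $O(\log n)$ edges. Summing the $O(\log n)$ edges changed in each step gives $|E(\F)\Delta E(\F')|=O(\log n)$ overall.

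\textbf{Setup and Stage 1 (move $x,y$ into $U_1',U_2'$).} Since at least $0.1n$ vertices of $\F$ lie on paths of length in $[100,\sqrt n]$, greedily partition those paths into five subfamilies and let $U_i$ be the vertex set of each subfamily with the two endpoints of each constituent path removed; then $|U_i|\ge 0.01 n$, $U_i$ contains no endpoint of $\F$, and the length lower bound $100$ gives $|\inter_\F(U_i)|\ge 0.97|U_i|\gg n/500$. Apply Lemma~\ref{lem:forestexpander} (to several overlapping quadruples, intersecting the removed vertices) to obtain $(\F,C')$-expanders $U_i'\subseteq U_i$, each $C'$-expanding into $\inter_\F(U_j')$ for every pair. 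Use Lemma~\ref{lem:forestexpander} once more to get an $(\F,C')$-expander $U_x\ni x$ of size $\ge 10^5n/C$ disjoint from $U_2'\cup U_3'\cup U_4'\cup U_{\text{hop}}'$, and apply Lemma~\ref{lem:mainrotation} with $U=U_x,\;V=U_1'$, obtaining $\F_1$ with new endpoint $x_1\in U_1'$. Because pivots are confined to $U_x\cup U_1'$ and $\F_1[U_1'\setminus\{x_1\}]=\F[U_1'\setminus\{x_1\}]$, the sets $U_2',U_3',U_4',U_{\text{hop}}'$ are untouched and remain $(\F_1,C')$-expanders with identical $\F_1$-interiors. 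Repeat with starting endpoint $y$ and target $V=U_2'$ to reach $\F_2$ with endpoints $x_1\in U_1'$ and $x_2\in U_2'$, the remaining three expanders still intact.

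\textbf{Stage 2 (rotate into $U_3',U_4'$ via a hop) and closing.} Apply Lemma~\ref{lem:mainrotation} to $\F_2$ from $x_2$ with $U=U_2',\,V=U_4'$ to obtain $\F_3$ with new endpoint $x_4\in U_4'$; then from $x_1$ with $U=U_1',\,V=U_{\text{hop}}'$ to obtain $\F_4$ with new endpoint $h\in U_{\text{hop}}'$; and finally from $h$ with $U=U_{\text{hop}}',\,V=U_3'$. Tracking the proof of Lemma~\ref{lem:mainrotation} (in its Case~2 one finds an edge from a set $X\subseteq U_{\text{hop}}'$ of size $n/C$ to $\inter_\F(U_3')\setminus P^2$, and the $C$-expansion of $G$ actually yields linearly many such edges and hence linearly many candidate new endpoints in $U_3'$) we may choose $x_3\in U_3'$ to lie on a different path of the current forest from $x_4$; this is possible because the path of $\F_5$ containing $x_4$ is a concatenation of at most $O(\log n)$ paths of $\F$ each of length at most $\sqrt n$, and so intersects $U_3'$ in at most $O(\sqrt n\log n)\ll n/C$ vertices. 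Call the resulting forest $\F_5$. By the invariants established in Stages~1 and~2, $U_3'$ and $U_4'$ are still $(\F_5,C')$-expanders of size $\ge 0.0001 n$, they contain no endpoints of $\F_5$ other than $x_3$ and $x_4$, and no path of $\F_5$ meets both. Lemma~\ref{lem:rotationforclosing} applied to $\F_5$ with $X=U_3',\,Y=U_4'$ produces $\F'$ with no isolated vertices, $|E(\F_5)\Delta E(\F')|=O(\log n)$, and $\End(\F')=\End(\F_5)\setminus\{x_3,x_4\}=\End(\F)\setminus\{x,y\}$, as required.

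\textbf{Main obstacle.} The delicate step is the final rotation of Stage 2: rotating $x_1$ directly into $U_3'$ would couple the choice of $x_3$ to the rotation started from $x_1$, with no mechanism to avoid having $x_3$ and $x_4$ on the same path of the current forest, in which case Lemma~\ref{lem:rotationforclosing} would close a short cycle rather than reduce the path count by one. The hop through $U_{\text{hop}}'$ detaches the final rotation from the earlier ones and gives linearly many candidate endpoints in $U_3'$, easily enough to dodge the few bad vertices lying on $x_4$'s path. The remaining bookkeeping -- that each invocation of Lemma~\ref{lem:mainrotation} leaves the untouched expanders $U_j'$ with identical restrictions of the forest, unchanged interiors, and no new endpoints -- follows from the structural guarantees of Lemma~\ref{lem:mainrotation} and the disjointness of the $U_i'$ from the sets $U\cup V$ used in each step.
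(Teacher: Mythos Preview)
Your overall plan follows the paper's, but two steps do not go through as written.

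\textbf{Stage 1: the sets $U_x,U_y$.} Lemma~\ref{lem:forestexpander} takes four disjoint sets with large $\F$-interior and returns subsets that mutually expand; it does not hand you an $(\F,C')$-expander containing a prescribed endpoint $x$. Since $x,y$ are arbitrary endpoints of $\F$, there is no obvious candidate set to feed into the lemma, and you give no construction. The paper sidesteps this entirely: it applies Lemma~\ref{lem:mainrotation} with $U=V(G)$ (which is trivially an $(\F,C')$-expander because $G$ is a $C$-expander), accepting that these first two rotations may disturb any region, and then \emph{redefines} $U_3,U_4,U_{\text{hop}}$ afterwards as expanders inside the sub-forests $\HH'_3,\HH'_4,\HH'_{\text{hop}}$ consisting only of medium-length paths $P_i$ that still lie in $\F\cap\F'$. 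Since $|E(\F)\triangle E(\F')|=O(\log n)$ and each such path has at most $\sqrt n$ vertices, almost all of them survive.

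\textbf{The hypothesis of Lemma~\ref{lem:rotationforclosing}.} That lemma requires that \emph{no path} of the current forest intersects both $X$ and $Y$, not merely that $x_3$ and $x_4$ lie on different paths. Your argument---that $x_4$'s path has at most $O(\sqrt n\log n)$ vertices in $U_3'$, so $x_3$ may be chosen off it---addresses only the weaker statement, and even that is beside the point: with your Stage~1, a rotation pivot in $\inter_\F(U_x)$ can lie on a third-subfamily path (at a vertex of $U_3\setminus U_3'$), breaking that path and merging a $U_3'$-meeting piece with, say, $x$'s original path, which might well be a fourth-subfamily path. Then some path of $\F_2$ already meets both $U_3'$ and $U_4'$, and nothing you do in Stage~2 repairs this. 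The paper's redefinition of $U_3,U_4,U_{\text{hop}}$ after Stage~1 is exactly what guarantees the needed separation: every $U_3$-vertex and every $U_4$-vertex sits on an intact path of $\F'$, and each Stage~2 rotation breaks only one edge in the target set $V$, so one can trace that in $\F'_3$ the only paths touching $U_3$ or $U_4$ are unchanged $\HH'_3$/$\HH'_4$ paths or the two new paths through $x_3,x_4$, each confined to a single region. The hop through $U_{\text{hop}}$ is there to make this tracing clean (after the hop, $h$'s path is a sub-path of an $\HH'_{\text{hop}}$ path, so the single $1$-rotation into $U_3$ cannot drag any $U_4$-vertices along), not to give ``linearly many candidates'' for $x_3$.
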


\begin{proof}
Let $P_1, \ldots, P_t$ be the paths in $\mathcal{F}$ of length between $100$ and $\sqrt{n}$, so that $\sum_{i\in [t]} |P_i| \geq 0.1n$. Let $\mathcal{H} := \{P_1', \ldots, P_t'\}$, such that each path $P_i'$ is obtained from $P_i$ by removing the two endpoints of the path $P_i$, so that now $\sum_{i\in [t]} |P_i'| \geq 0.1n(1-\frac{1}{50})\geq 0.098n$. Take a partition of this linear forest $\mathcal H$ into $\mathcal{H}_1, \mathcal{H}_2, \mathcal{H}_3 , \mathcal{H}_4, \mathcal{H}_{\text{hop}}$, each of which span at least $0.015n$ vertices. Hence, the interior of each of these forests (since they consist of paths of length at least 98) is at least of size $\frac{96}{98}\cdot 0.015n\geq 0.01n$.

Let $C'=C/5000$. By \Cref{lem:forestexpander}, we can find subsets $U_1\subseteq V(\HH_1)$ and $U_2\subseteq V(\HH_2)$, where $U_i$ is such that it $C'$-expands into $\inter_\HH(U_j) \subseteq \inter_\F(U_j)$ for all $i,j\in\{1,2\}$. In particular, $U_1$ and $U_2$ are $(\F,C')$-expanders. Furthermore, every vertex in $U_1$ has at least $C'>100$ neighbours in $\inter_{\F}(U_2)$, and every vertex in $U_2$ has at least $C'>100$ neighbours in $\inter_{\F}(U_1)$. We also can get from \Cref{lem:forestexpander} that, for each $i\in\{1,2\}$, it holds that $|\inter_{\F}(U_i)|\geq |V(\mathcal H_i)|-2n/C\geq 0.01n -2n/C\geq 0.005n$.

We now use Lemma \ref{lem:mainrotation} to prove that the following holds.

\begin{claim} 
There is a linear forest $\mathcal{F}'$ and $x_1 \in U_1, x_2 \in U_2$ such that $\End (\mathcal{F}')= \left(\End (\mathcal{F}) \setminus \{x,y\} \right) \cup \{x_1,x_2\}$, $|E\left(\mathcal{F}'[U_1 \cup U_2] \right) \Delta E\left(\mathcal{F}[U_1 \cup U_2] \right)| \leq 20$ and $|E(\mathcal{F}) \Delta E(\mathcal{F}')| = O(\log n)$. Furthermore, $\F'$ has no isolated vertices.
\end{claim}
\begin{proof}
First, apply Lemma \ref{lem:mainrotation} with $u:=x, U := V(G)$, and $V:= U_1 \cup U_2$ to give a $(V(G),O(\log n))$-rotation $\mathcal{F}_1$ of $\mathcal{F}$, where $x$ is replaced by a new endpoint $x_1 \in U_1 \cup U_2$, so that $\End (\F_1) = (\End (\F) \setminus \{x\}) \cup \{x_1\}$, and $\mathcal{F}[(U_1 \cup U_2) \setminus x_1] = \mathcal{F}_1[(U_1 \cup U_2) \setminus x_1]$. Furthermore, $\F_1$ has no isolated vertices. Without loss of generality, let $x_1 \in U_1$.
 
Now, we apply again Lemma \ref{lem:mainrotation} to $\mathcal{F}_1$ with $u:=y,U := V(G) \setminus T,$ and $V := (U_1 \cup  U_2) \setminus T$, where $T$ is the set of vertices which are at most at distance $2$ from $x_1$ in $\F$, so that $|T| \leq 5$. 
Thus, we get a $O(\log n)$-rotation $\mathcal{F}_2$ of $\mathcal{F}_1$ where $y$ is replaced by a new endpoint $w \in (U_1 \cup U_2) \setminus T$, so that $\End (\F_2) = (\End (\F) \setminus \{x,y\}) \cup \{x_1,w\}$ and $\mathcal{F}_1[V \setminus \{w\}] = \mathcal{F}_2[V \setminus \{w\}]$. 
Furthermore, since the endpoint is not in $T$, it is different to $x_1$. Finally, $\F_2$ has no isolated vertices. 

Now, in the case that $w \in U_2$, the linear forest $\mathcal{F}' := \mathcal{F}_2$ is as desired with $x_2:= w$ and $x_1$ since $\mathcal{F}[(U_1 \cup U_2) \setminus (\{x_1,w\} \cup T)] = \mathcal{F}_2[(U_1 \cup U_2) \setminus (\{x_1,w\} \cup T)]$. 
Otherwise, if $w \in U_1$, then, by the previous condition that every vertex in $U_1$ has at least $100$ neighbours in $\inter_{\F}(U_2)$, there is an edge $wz$ for some $z \in \inter_{\F}(U_2) \cap \inter_{\mathcal{F}_2}(U_2)$. We can then use this edge to create a $1$-rotation $\F'$ of $\F_2$, replacing the old endpoint $w$ with a new endpoint $x_2 \in U_2 \setminus \{x_1\}$, that is, $\End (\F_2) = (\End (\F) \setminus \{x,y\}) \cup \{x_1,x_2\}$. 
The linear forest $\mathcal{F}'$ is then as desired since, in particular, $\mathcal{F}[(U_1 \cup U_2) \setminus (\{w,z,x_2\} \cup T)] = \mathcal{F}'[(U_1 \cup U_2) \setminus (\{w,z,x_2\} \cup T)]$ and again, by \Cref{obs:endpoint change}~\emph{\ref{obs: new endpoint is maybe iso}}, since $\F_2$ has no isolated vertices and $x_2$ is not an endpoint of $\F$, and so, also not one of $\F_2$ given that $\End (\F_2) = (\End (\F) \setminus \{x,y\}) \cup \{x_1,w\}$, $\F'$ has no isolated vertices. 

Finally, we have $|E\left(\mathcal{F}'[U_1 \cup U_2] \right) \Delta E\left(\mathcal{F}[U_1 \cup U_2] \right)| \leq 20$, since these linear forests differ in at most 8 vertices, either $T\cup\{x_1,w\}$ in the first case,  or $T\cup \{w,z,x_2\}$ in the second.
\renewcommand{\qedsymbol}{$\boxdot$}
\end{proof}
\renewcommand{\qedsymbol}{$\square$}
Let us make two remarks. First, the future rotations of $\F'$ that we will perform in the rest of the proof will all be $V(\mathcal{H})$-rotations, and, therefore, none of those linear forests will contain any isolated vertices; this is important since there will be applications of Lemma~\ref{obs:endpoint change}, 
\Cref{lem:rotationforclosing} and \Cref{lem:mainrotation} throughout the rest of the proof. Secondly, note that to obtain $\cal F'$ we rearranged some paths, hence, it is entirely possible that, in $\cal F'$, vertices in $U_1$ live in the same paths as vertices in $U_2$ (although this was not the case in $\F$). Thus, we cannot yet apply \Cref{lem:rotationforclosing} to the new endpoints $x_1,x_2$.

Now, let $\mathcal{H}'_3 \subseteq \mathcal{H}_3, \mathcal{H}'_4 \subseteq \mathcal{H}_4$, and $\mathcal{H}'_{\text{hop}} \subseteq \mathcal{H}_{\text{hop}},$  be the linear sub-forests of $\mathcal{H}_3,\mathcal{H}_4, \mathcal{H}_{\text{hop}}$ consisting of paths $P'_i$ such that $P_i \in \mathcal{F} \cap \mathcal{F}'$. 
    By the previous claim, we have that $|E(\mathcal{F}) \Delta E(\mathcal{F}')| = O(\log n)$. Since each path in $\mathcal{H}$ has size at most $\sqrt{n} = o(n/\log n)$, we must then have that $|V(\mathcal{H}'_j)| \geq 0.009n$ for each $j\in\{3,4,hop\}$. 
    By Lemma \ref{lem:forestexpander}, there exist an $(\mathcal{H}'_3,C')$-expander $U_3$, an $(\mathcal{H}'_4,C')$-expander $U_4$, and an $(\mathcal{H}'_{\text{hop}},C')$-expander $U_5$, each of size at least $0.009n-n/2C\geq 0.005n$. 
    As before, since we also can get that each $U_i$ is such that it $C'$-expands into $\inter_{\mathcal{F}'}(U_j)$ (for all $i,j\in\{3,4,hop\})$, we can have that every vertex in 
$U_{\text{hop}}$ has at least $C'\geq 1$ neighbours in $\inter_{\mathcal{F}'}(U_3)$. Also, note the following crucial observation.
\begin{claim}
$U_1$ and $U_2$ are $(\mathcal{F}',C'-40)$-expanders.    
\end{claim}
\noindent Indeed, this is easy to see since the claim before implies that $|E\left(\mathcal{F}'[U_1 \cup U_2] \right) \Delta E\left(\mathcal{F}[U_1 \cup U_2] \right)| \leq 20$. We now apply Lemma \ref{lem:mainrotation} again.
First, we apply the lemma to $\mathcal{F}'$ with $x:= x_1, U:= U_1,$ and $V:= U_{\text{hop}}$ (recall that $U_{\text{hop}}$ does not contain endpoints of $\F$, and so also of $\F'$), which gives a $(U_1 \cup U_{\text{hop}},O(\log n))$-rotation $\mathcal{F}'_1$ of $\mathcal{F}'$ in which the endpoint $x_1$ is replaced by an endpoint $h \in U_{\text{hop}}$. 
Moreover, we have that $\F'_1[U_{\text{hop}} \setminus h] = \F'[U_{\text{hop}} \setminus h]$ and $\F'_1$ is a $(U_1 \cup U_{\text{hop}})$-rotation, so that all broken edges in the successive rotations belong to $U_1 \cup U_{\text{hop}}$ and, thus, $U_2$ is still an $(\mathcal{F}'_1,C'-40)$-expander,  
$\inter_{\mathcal{F}'_1}(U_4) = \inter_{\mathcal{F}}(U_4)$ is of size at least $0.001n$, and $\F'_1[U_4] = \F'[U_4]$. 
Furthermore, the lemma implies that 
no vertex of $U_4$ is an endpoint of $\F'_1$. 
Therefore, we can apply again Lemma \ref{lem:mainrotation} to $\mathcal{F}'_1$, now with $x := x_2, U := U_2,$ and $V:= U_4$. This gives a $(U_2 \cup U_4,O(\log n))$-rotation $\mathcal{F}'_2$ with a new endpoint $x_4 \in U_4$ replacing $x_2$.

Now, note that for all paths $P'_i \in \mathcal{H}'_3$, we have that $P_i \in \mathcal{F}'_2$. Note also that since the previous rotations forming $\F'_1$ and $\F'_2$ are such that all the broken edges in the successive $1$-rotations apart from the last ones are not in $\F[U_{\text{hop}}]$ or $\F[U_4]$, we have that the path in $\mathcal{F}'_2$ containing $h$ is a sub-path of a path $P_i \in \F$ such that $P'_i$ is contained in $\HH'_{\text{hop}}$ and the path containing $x_4$ is a sub-path of a path $P_j \in \F$ such that $P'_j$ is contained in $\HH'_4$. 
Thus, $h,x_4$ are endpoints of different paths in $\mathcal{F}'_2$. Now, by assumption, $h \in U_{\text{hop}}$ has a neighbour in $\inter_{\mathcal{F}'}(U_3) = \inter_{\mathcal{F}'_2}(U_3)$.
This implies that we can perform a $1$-rotation on $\mathcal{F}'_2$ to get $\mathcal{F}'_3$ and a vertex $x_3 \in U_3$ with $\End (\F'_3) = (\End(\F'_2) \setminus \{h\}) \cup \{x_3\} = (\End(\F) \setminus \{x,y\}) \cup \{x_3,x_4\}$. 

Now, since $h,x_4$ were endpoints of different paths in $\mathcal{F}'_2$, it is easy to check that there can be no path of $\mathcal{F}'_3$ which intersects both $U_3$ and $U_4$. Since $U_3$ and $U_4$ are still $(\mathcal{F}'_3,C'-2)$-expanders and have size at least $0.005n$, Lemma \ref{lem:rotationforclosing} implies the existence of a linear forest $\mathcal{F}'_4$ so that $\End (\mathcal{F}'_4)  = \End (\mathcal{F}) \setminus \{x,y\}$. Moreover, since every rotation performed was an $O(\log n)$-rotation, we have that $|E(\mathcal{F}) \Delta E(\mathcal{F}'_4)| = O(\log n)$, as desired. 
\end{proof}




\section{Linear forests with few paths}\label{sec:shortpaths}

The goal of this section is to find a spanning linear forest in an expander with a relatively small number of paths. That is, we will prove the following result.

\begin{lem}\label{lem:smallforest}
Let $G$ be an $n$-vertex $C$-expander for $C>10^{10}$. Then, it contains a spanning linear forest with at most $n^{0.8}$ paths and no isolated vertices.
\end{lem}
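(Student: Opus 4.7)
To prove Lemma~\ref{lem:smallforest}, my plan is to start from a spanning linear forest of $G$ with no isolated vertices and iteratively apply Lemma~\ref{lem:generalpathforestrot} to reduce the number of paths by one per iteration, until at most $n^{0.8}$ remain. Such an initial $\F_0$ exists: since the expansion of $G$ implies minimum degree at least $C$ and a near-perfect matching, we can take $\F_0$ to be any spanning linear forest with no isolated vertices (for instance a near-perfect matching, augmented so that any two endpoints of different paths that happen to be $G$-adjacent have been merged). Since each application of Lemma~\ref{lem:generalpathforestrot} modifies only $O(\log n)$ edges, the bulk of the work is in ensuring that its hypothesis — at least $0.1n$ vertices lying on paths of length between $100$ and $\sqrt{n}$ — continues to hold at each step.

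At a given stage, if $\F$ fails the hypothesis then either (i) at least $0.45n$ vertices lie on \emph{short} paths (length $< 100$) or (ii) at least $0.45n$ vertices lie on \emph{long} paths (length $> \sqrt{n}$). I would first run a \emph{path-regulation} subroutine to transform $\F$ into one satisfying the hypothesis, without creating isolated vertices, and only then apply Lemma~\ref{lem:generalpathforestrot}. In case (i), there are $\Omega(n)$ short-path endpoints; starting from one of them and applying Lemma~\ref{Lem_endpoints_in_expander} inside an $(\F,C')$-expander subset of the short-path region (carved out via Lemma~\ref{lem:forestexpander}), we obtain $\Omega(n)$ reachable new endpoints, and then use Definition~\ref{def:expander}(b) to find an edge connecting two short paths and merge them; iterating builds medium-length paths until case (i) fails. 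In case (ii), the interiors of the long paths (after removing a $100$-vertex buffer near each endpoint) form a set $S$ of size at least $0.4n$; for any endpoint $u$ of a non-long path, expansion provides a $G$-neighbour $v_i \in S$, and the swap that removes $v_iv_{i+1}$ and adds $uv_i$ simultaneously splits the long path and absorbs the other path into one of its pieces, keeping the total path count unchanged while shortening a long path. Choosing $v_i$ near the midpoint and iterating reduces all long paths to medium length.

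The hard part is the path-regulation subroutine itself: verifying that the required swap and merge edges can always be found via expansion, that no isolated vertices are ever created (crucial for subsequent invocations of Lemma~\ref{lem:generalpathforestrot} and Lemma~\ref{lem:mainrotation}), and that the process really terminates in a forest where most vertices lie on paths of length in $[100,\sqrt{n}]$ rather than oscillating between the short- and long-heavy regimes. The principal technical tool is Lemma~\ref{lem:forestexpander}, which produces $(\F,C')$-expander subsets inside any sufficiently large portion of a linear forest; together with Definition~\ref{def:expander}(b) this reliably provides the edges needed for each regulation step. Once the regulation is implemented carefully, each iteration of the main loop strictly decreases the path count by one, and the procedure terminates in at most $O(n)$ rounds, yielding the required spanning linear forest with at most $n^{0.8}$ paths and no isolated vertices.
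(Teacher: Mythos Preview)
Your plan diverges from the paper's proof, and the divergence is at exactly the step you yourself flag as ``the hard part'': the path-regulation subroutine, specifically case~(ii). As written, it does not work. You claim that for an endpoint $u$ of a non-long path, expansion supplies a neighbour $v_i$ in the long-path interior $S$, and that the swap (delete $v_iv_{i+1}$, add $uv_i$) shortens a long path. But you have no control over where $v_i$ lands on its long path: the expander property (Definition~\ref{def:expander}(b)) only gives you \emph{some} edge between two linear-sized sets, not an edge to a prescribed location such as ``near the midpoint''. If $v_i$ sits near one end of its long path $P$, the swap produces one very short piece and one piece of size roughly $|P|+|P_u|$, which can be \emph{longer} than $P$. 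So a single step need not make progress, and there is no evident potential function preventing the process from oscillating indefinitely between regimes~(i) and~(ii). This is a real obstruction, not just a missing detail; your sketch gives no mechanism to overcome it.

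The paper sidesteps regulation entirely by taking $\F$ to be $<_{\lex}$-minimal (fewest paths, then lexicographically smallest sorted length vector) and proving structural facts about such a forest. The key observation (Lemma~\ref{lem:small increase paths}) is that in a $<_{\lex}$-minimal forest no path endpoint can have a $G$-neighbour on a path more than five times longer --- otherwise a single swap would produce a $<_{\lex}$-smaller forest. Iterating this through $k$ rotations (Lemma~\ref{Lemma_segment_lengths_after_rotation}) shows that all vertices reachable by $k$-rotations from a shortest path's endpoint lie on paths of length at most $6^{k+1}$ times the shortest. Combined with Lemma~\ref{Lem_endpoints_in_expander} (which forces $\Omega(n)$ reachable endpoints after $O(\log_C n)$ rotations), this bounds the number of vertices on paths longer than $\sqrt{n}$ by $n/2C$ whenever $\F$ has more than $n^{0.8}$ paths. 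Minimality also forces $\F$ to have at most $n/C$ paths and no isolated vertices (the latter via Lemma~\ref{lem:induction_for_no_isolated_lemma}), so at most $100n/C$ vertices lie on paths of length below $100$. Hence the hypothesis of Lemma~\ref{lem:generalpathforestrot} is automatically met, and one application contradicts minimality. The $<_{\lex}$-ordering is precisely the missing potential function you would need to make an iterative argument terminate; the paper uses it extremally rather than algorithmically.
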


\noindent We will find the required linear forest for Lemma~\ref{lem:smallforest} by taking a minimal such linear forest under a certain ordering, which is based on the lengths of the paths in the forest. In this section we will define the length of a path to be the number of vertices it has (which makes calculations a bit easier).  

\begin{defn}
    Given a linear forest $\F$ and integers $a,b$, we define $S_\F(a,b)$ to be the set of vertices in the collection of paths in $\F$ of length at least $a$ and at most $b$. For each $v\in V(\F)$, we denote by $\seg_{\F}(v)$ the path in $\F$ which contains $v$. We omit $\F$ in the subscript when it is clear from the context.  
\end{defn}

\begin{defn}
    Let $<_{\lex}$ be the ordering on the family of all linear forests, where for any two linear forests $\F_1$ and $\F_2$ we have $\F_1<_{\lex} \F_2$ if
    \begin{itemize}
        \item $\F_1$ has fewer paths than $\F_2$, or
        \item $\F_1$ and $\F_2$ have the same number of paths and the vector of path lengths of $\F_1$ in decreasing order  is lexicographically smaller than that of $\F_2$.
    \end{itemize}
\end{defn}

\noindent To illustrate this definition,  for example, if $\F_1$ consists of paths of length $8,5,3,2,2$ and $\F_2$ consists of paths of length $8,5,4,2,1$, then we have $F_1<_{\lex}F_2$ as they both have 5 paths but $(8,5,3,2,2)$ is smaller than $(8,5,4,2,1)$ in the lexicographic ordering.


As discussed in Section~\ref{sec:rotations}, it will be convenient to use spanning linear forests with no isolated vertices. We will be able to apply the following lemma to show that a $<_{\lex}$-minimal spanning linear forest in an expander has no isolated vertices.

\begin{lem}\label{lem:induction_for_no_isolated_lemma}
Let $\F$ be a $<_{\lex}$-minimal spanning linear forest in an $n$-vertex $C$-expander $G$ for some $C>10^6$. Suppose that $\F$ contains an isolated vertex $v$, and $\F'$ is a $k$-rotation of $\F$ with old endpoint $v$. Then $\F'$ is $<_{\lex}$-minimal, the new endpoint $u$ of $\F'$ is isolated in $\F'$, and $\End(\F')=\End(\F)$.
\end{lem}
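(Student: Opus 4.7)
I would proceed by induction on $k$, with the base case $k=1$ doing the bulk of the work through a short case analysis on the $1$-rotation $\F'$ of $\F$ with old endpoint $v$ (isolated in $\F$), pivot $z$, and new endpoint $y=u$. If $z$ is itself isolated in $\F$, then by definition we have $y=z$ and the $1$-rotation merely adds the edge $vz$, reducing the number of paths by one; this would give $\F'<_{\lex}\F$, contradicting minimality. Hence $z$ lies on some path $P_2$ of $\F$ with at least one edge. Removing the edge $zy$ splits $P_2$ into a sub-path $Q_y$ having $y$ as an endpoint and a sub-path $Q_z$ having $z$ as an endpoint, and adding the edge $vz$ attaches the former isolated vertex $v$ to $Q_z$. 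The resulting forest $\F'$ has the same number of paths as $\F$, and their sorted length vectors differ only in that the pair $(|P_2|,1)$, coming from $P_2$ together with the isolated $\{v\}$, is replaced by the sorted version of $(|Q_y|,|Q_z|+1)$. Since $|Q_y|+|Q_z|=|P_2|$, the first coordinate of this new sorted pair is at most $|P_2|-1$ unless $|Q_y|=1$, i.e.\ unless $y$ is an endpoint of $P_2$. The $<_{\lex}$-minimality of $\F$ therefore forces $y$ to be an endpoint of $P_2$, in which case $Q_y=\{y\}$ and the new sorted pair is again $(|P_2|,1)$.

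In this surviving configuration, $u=y$ is an isolated vertex of $\F'$ because its only $\F$-edge $zy$ has been removed and no new edge at $y$ was added. The sorted length vector of $\F'$ equals that of $\F$, so any linear forest strictly $<_{\lex}$-smaller than $\F'$ would also be strictly smaller than $\F$, contradicting the minimality of $\F$; hence $\F'$ is $<_{\lex}$-minimal. To verify $\End(\F')=\End(\F)$: the isolated vertex $v\in\End(\F)$ is now a (non-isolated) endpoint of the new path $\{v\}\cup Q_z$, so $v\in\End(\F')$; the vertex $y\in\End(\F)$ is isolated in $\F'$, so $y\in\End(\F')$; the other endpoint of $P_2$ (possibly equal to $z$ when $|P_2|=2$) is now an endpoint of $\{v\}\cup Q_z$; and all other endpoints of $\F$ are untouched.

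For the inductive step, decompose the $k$-rotation $\F'$ as a $1$-rotation of a $(k-1)$-rotation $\F''$ of $\F$ with old endpoint $v$. The induction hypothesis applied to $\F''$ yields that $\F''$ is $<_{\lex}$-minimal, its new endpoint $w$ is isolated in $\F''$, and $\End(\F'')=\End(\F)$. By the definition of a $k$-rotation, the last $1$-rotation in the sequence has old endpoint $w$, which is isolated in $\F''$, so the base case applies to the pair $(\F'',w)$ and gives that $\F'$ is $<_{\lex}$-minimal, its new endpoint $u$ is isolated in $\F'$, and $\End(\F')=\End(\F'')=\End(\F)$, completing the induction.

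I expect the main obstacle to be not conceptual but careful bookkeeping in the base case: setting up the split of $P_2$ into $Q_y$ and $Q_z$, verifying the sorted length vector comparison, and, when checking $\End(\F')=\End(\F)$, remembering that isolated vertices count as their own endpoints so that the transformation effectively relocates the ``isolated vertex'' role from $v$ to $u$ without creating or removing any other endpoints.
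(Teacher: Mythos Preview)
Your proof is correct and follows essentially the same inductive scheme as the paper's. The induction step is identical: decompose the $k$-rotation into a $(k-1)$-rotation $\F''$ followed by a $1$-rotation, invoke the induction hypothesis on $\F''$, then apply the base case to the last step.

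The only real difference is in the base case analysis. The paper argues more strongly: letting $w$ be the pivot and $P$ the path of $\F$ containing it, the paper constructs auxiliary forests (not necessarily equal to the given $\F'$) to force $|P|=3$, i.e.\ $P=xwy$ with both neighbours of $w$ being endpoints of $P$. You instead work directly with the given $1$-rotation $\F'$, splitting the pivot's path $P_2$ into $Q_y\cup Q_z$ and observing that unless $|Q_y|=1$ the replacement of the pair $(|P_2|,1)$ by $(|Q_y|,|Q_z|+1)$ strictly decreases the sorted length vector. Your intermediate conclusion ($y$ is an endpoint of $P_2$) is weaker than the paper's ($P_2$ has exactly three vertices), but it is all that the lemma requires, and your route is arguably more direct since it never leaves the given rotation $\F'$.

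One small point worth making explicit for a polished write-up: when you assert that replacing $(|P_2|,1)$ by a pair with smaller maximum yields a $<_{\lex}$-smaller sorted vector, this uses that all other path lengths are unchanged, so the first position at which the two sorted sequences differ is the last occurrence of $|P_2|$ in the old sequence, where the new sequence has a strictly smaller value. You clearly have this in mind, but spelling it out in one line would make the argument airtight.
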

\begin{proof}
We prove this by induction on $k$.
For the initial case, ``$k=1$'', let $w$ be the pivot of the $1$-rotation, and let $P$ be the path of $\F$ containing $w$. Let $P_1$ and $P_2$ be the two (possibly empty) subpaths of $P-w$, labelled so that $|P_1|\leq |P_2|$. If $|P_2|\geq 2$, then replacing $\{v\}$ and $P$ in $\F$ with the paths $P_1wv$ and $P_2$ gives a linear forest $\F'$ with $\F'<_{\lex} \F$, which is a contradiction. Similarly, if $|P_1|=0$, then replacing $\{v\}$ and $P$ in $\F$ with the path $P_2wv$ gives  a linear forest $\F'$ with $\F'<_{\lex} \F$, which is a contradiction.
Thus we can suppose that $P_1, P_2$ are both single vertices, i.e.\ that $P=xwy$ for some $x,y$. Then, a $1$-rotation with pivot $w$ replaces the paths $P$ and $\{v\}$ with either the paths $vwx, y$ or the paths $vwy, x$. In both cases the new linear forest has the same path lengths as $\F$ did (and so stays $<_{\lex}$-minimal), the same set of endpoints as $\F$,  and has new endpoint either $x$ or $y$, which is now isolated.

For the induction step, consider a $k$-rotation $\F_k$ of $\F$ with old endpoint $v$. By the definition of a $k$-rotation, we get a $(k-1)$-rotation $\F_{k-1}$ of $\F$ with old endpoint $v$, new endpoint $u$, such that $\F_k$ is a $1$-rotation of $\F_{k-1}$ with old endpoint $u$. By induction we get that $\F_{k-1}$ is $<_{\lex}$-minimal, has $u$ isolated, and has $\End(\F_{k-1})=\End(\F)$. By the ``$k=1$'' case, we have that $F_k$ is $<_{\lex}$-minimal, its new endpoint is isolated, and $\End(\F_{k})=\End(\F_{k-1})=\End(\F)$.
\end{proof}
\noindent We now sketch out the proof of Lemma~\ref{lem:smallforest}. Given a $<_{\lex}$-minimal spanning linear forest $\F$ in a $C$-expander $G$, with $C$ large, we will first observe that there cannot be an edge in $G$ from the end of a path in $\F$ to a path in $\F$ which is more than five times as long (see Lemma~\ref{lem:small increase paths}). We will then argue, for any $i\geq 0$, that this implies that if we perform a sequence of $i$ 1-rotations starting with an endpoint of a smallest path in a $<_{\lex}$-minimal spanning linear forest $\F$, then the new endvertex must be in a path at most $6^i$ times as long as a smallest path (see Lemma~\ref{Lemma_segment_lengths_after_rotation}). However, Lemma~\ref{Lem_endpoints_in_expander} implies that within at most $k:=2\log_Cn$ rotations we reach at least $n/10^5$ new endpoints, which then collectively have edges to all but at most $n/2C$ vertices in the graph by the definition of a $C$-expander. After using Lemma~\ref{lem:induction_for_no_isolated_lemma} and assuming, for sake of contradiction that $\F$ has more than $n^{0.8}$ paths, we can deduce that $\F$ has no isolated vertices and also note that $\F$ must have at most $100n/C$ vertices contained in paths of length less than $100$, so that the $<_{\lex}$-minimality of $\F$ then implies by Lemma~\ref{lem:generalpathforestrot} that at least $0.8n$ of the vertices of $G$ are in paths in $\F$ with more than $\sqrt{n}$ vertices. In combination, this all implies that the shortest path in $\F$ must have at least $6^{-(k+1)}\sqrt{n}$ vertices, so that $\F$ contains at most $\sqrt{n}\cdot 6^{(k+1)}\leq n^{0.8}$ paths, a contradiction, as required.

First, then, we show endvertices of paths cannot have an edge to much longer paths in a $<_{\lex}$-minimal spanning linear forest, in the following stronger form.

\begin{lem}\label{lem:small increase paths}
    Let $t\geq t'>0$ be integers. Let $\F$ be a minimal spanning linear forest in $G$ with respect to $<_{\lex}$, with paths of decreasing lengths $\ell_1,\ldots,\ell_t$. Let $\F'$ be another forest with paths of lengths $s_1,\ldots,s_t$ in decreasing order, where the first $t'-1$ lengths are the same as in $\F$, i.e.\ $\ell_i=s_i$ for each $i<t'$. Let $x$ be an endpoint in $\F'$ in a path of length at most $s_{t'}$. Then, $N_G(x)\subseteq S_{\F'}(0,5s_{t'})$.
\end{lem}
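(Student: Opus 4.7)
My plan is to assume for contradiction that there is some $y \in N_G(x)$ whose $\F'$-path $Q := \seg_{\F'}(y)$ has $|Q| > 5 s_{t'}$, and to construct a spanning linear forest $\F''$ of $G$ with $\F'' <_{\lex} \F$, contradicting the $<_{\lex}$-minimality of $\F$. Set $P := \seg_{\F'}(x)$, $a := |P| \leq s_{t'}$, and $m := |Q|$, so that $m > 5 s_{t'} \geq 5a$, and in particular $P \neq Q$ (as otherwise $m \leq a$).

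The construction of $\F''$ splits into two cases depending on whether $y$ is an endpoint of $Q$. If $y$ is an endpoint of $Q$, then since $P \neq Q$, adding the edge $xy$ to $\F'$ merges $P$ with $Q$ into a single path of length $a + m$, giving a linear forest $\F''$ with one fewer path than $\F'$ (and hence than $\F$), so $\F'' <_{\lex} \F$ by the first bullet in the definition of $<_{\lex}$. If instead $y$ is an interior vertex of $Q$, I label $Q = v_1 v_2 \cdots v_m$ with $y = v_i$ for some $2 \leq i \leq m-1$ and form $\F''$ by adding $xy$ and deleting exactly one of the two $\F'$-edges at $y$, chosen according to $i$: when $i \leq m - a - 1$ I delete $v_i v_{i+1}$, giving new paths of lengths $a + i$ and $m - i$, both at most $m - 1$; otherwise $i \geq m - a$, and the bound $m > 5a$ forces $i \geq m - a \geq a + 2$, so I delete $v_{i-1} v_i$, giving new paths of lengths $i - 1$ and $a + m - i + 1$, again both at most $m - 1$.

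It remains to verify $\F'' <_{\lex} \F$ in the interior case. Let $j$ be the largest index with $s_j = m$; then $s_1 = \cdots = s_j = m$ and $s_{j+1} < m$. Since $m > s_{t'}$ we have $j < t'$, so by hypothesis $s_k = \ell_k$ for all $k \leq j$. The multiset of path lengths of $\F''$ is obtained from that of $\F'$ by removing one copy of $m$ (the length of $Q$) and one copy of $a$ (the length of $P$) and inserting two new lengths, both strictly smaller than $m$. The values $s_1, \ldots, s_{j-1}$, all equal to $m$, are unaffected by these changes, so the sorted vectors of $\F''$ and $\F$ coincide in positions $1, \ldots, j-1$, while the $j$-th entry of the sorted vector of $\F''$ is the maximum of $s_{j+1}$ and the two new lengths, which is strictly less than $m = s_j = \ell_j$. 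Therefore $\F'' <_{\lex} \F$, the desired contradiction.

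I expect the only subtle point to be the case analysis ensuring that some choice of deleted edge always yields two new path lengths strictly less than $m$; this is precisely where the factor $5$ in the hypothesis is used, through $m > 5a \geq 2a + 2$, which guarantees that the two sub-cases $i \leq m - a - 1$ and $i \geq m - a$ together cover every interior position.
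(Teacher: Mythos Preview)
Your proof is correct and follows the same approach as the paper: build a forest $\F''<_{\lex}\F$ from $\F'$ by adding $xy$ and deleting a well-chosen edge of $Q$ so that both new paths are strictly shorter than $Q$; the paper avoids your case split by always attaching $P$ to the shorter of the two subpaths of $Q$ from $y$ to an endpoint. One small slip to fix: after defining $j$ as the largest index with $s_j=m$ you write ``$s_1=\cdots=s_j=m$'', but in general only $s_1\ge\cdots\ge s_j=m$ holds; your conclusion that the first $j-1$ sorted entries are unchanged is still valid, since the multiset modification removes one copy of $m$ and one value $a<m$ and inserts two values strictly below $m$.
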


\begin{proof}
    Suppose for contradiction there is a path $P_i $ (of length $s_i$) in $\F'$ with $s_i\leq s_{t'}$, whose endpoint $x$ has a neighbour $y$ in another path $P_j$ of length at least $5s_{t'}\geq 5|P_i|\geq 5$. Then, we can obtain a new forest $\F''$ from $\F'$ such that $\F''<_{\lex} \F$, as follows. We replace the paths $P_i$ and $P_j$ in $\F'$ by two new paths. The first new path is the concatenation of $P_i$, the edge $xy$ and the shorter subpath of $P_j$ which starts at $y$ and ends in an endpoint of $P_j$. The other path is the remaining part of $P_j$. Note that both of those paths are shorter than $P_j$ as the first path is of length at most $|P_i|+1+|P_j|/2\leq |P_j|/5+1+|P_j|/2<|P_j|$, and the second one is a strict subpath of $P_j$. This contradicts the $<_{\lex}$-minimality of $\F$.
\end{proof}

\noindent Next, we iterate Lemma~\ref{lem:small increase paths}, showing that repeatedly applying 1-rotations cannot create a new endpoint with a neighbour in a considerably longer path than the path containing the old endpoint, as follows. 

\begin{lem}\label{Lemma_segment_lengths_after_rotation}
Let $\F$ be a $<_{\lex}$-minimal spanning linear forest of $G$. Let $v$ be an endpoint of a path in $\F$. Then $N(E^k_G(v,\F))\subseteq S_{\F}(0, 6^{k+1}|\seg_{\F}(v)|)$ for every positive integer $k$.
\end{lem}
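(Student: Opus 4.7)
The plan is to proceed by induction on $k$. Set $L_0 := |\seg_{\F}(v)|$ and, for a $k$-rotation $\F_k$ of $\F$ starting at $v$, write $y_0 = v, y_1, \ldots, y_k$ for the successive new endpoints and $z_1, \ldots, z_k$ for the pivots (so $y_j$ is the new endpoint created by the $j$-th $1$-rotation from $\F_{j-1}$ using pivot $z_j$). The inductive hypothesis I maintain is $N(E^j_G(v,\F)) \subseteq S_{\F}(0, 6^{j+1} L_0)$ for every $1 \leq j \leq k$. A key consequence that I will extract is that $|\seg_{\F}(y_j)| \leq 6^j L_0$ for every $j \leq k$: since the pivots in a $j$-rotation are at $\F$-distance $\geq 3$ from $v$ and from each other, none of the previously broken or added edges touches $z_j$, so the $\F_{j-1}$-neighbours of $z_j$ agree with its $\F$-neighbours; in particular $y_j$ is a $\F$-neighbour of $z_j$, and since $z_j \in N_G(y_{j-1})$ with $y_{j-1} \in E^{j-1}$, the IH (or Lemma~\ref{lem:small increase paths} applied to $\F' = \F, x = v$ when $j = 1$) yields $z_j \in S_\F(0, 6^j L_0)$.

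For the base case $k = 1$, the case $u = v$ is immediate from Lemma~\ref{lem:small increase paths} with $\F' = \F$. Otherwise $u = y_1$ with $|\seg_\F(u)| = |\seg_\F(z_1)| \leq 5 L_0$. To bound $N_G(u)$ I apply Lemma~\ref{lem:small increase paths} with $\F' = \F_1$ and $x = u$: the forest $\F_1$ differs from $\F$ in at most two paths whose vertex sets together equal $\seg_\F(v) \cup \seg_\F(z_1)$ and whose lengths are each at most $L_0 + |\seg_\F(z_1)| \leq 6 L_0$. Choosing $t'$ as the largest index $\leq$ the first differing index for which $s_{t'} \geq |\seg_{\F_1}(u)|$, I get $s_{t'} \leq 6 L_0$, hence $N_G(u) \subseteq S_{\F_1}(0, 30 L_0)$. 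Each $\F_1$-path of length $\leq 30 L_0$ is either an unchanged $\F$-path of the same length, or a modified path whose vertices lie in $\seg_\F(v) \cup \seg_\F(z_1) \subseteq S_\F(0, 5 L_0)$, so $N_G(u) \subseteq S_{\F}(0, 6^2 L_0)$.

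For the inductive step $k \to k+1$, let $u \in E^{k+1}$; the case $u \in E^k$ is handled by the IH so I may assume $u = y_{k+1}$. The key structural observation is that the $k+1$ pairs of broken and added edges of $\F_{k+1}$ form a chain through $v, z_1, y_1, z_2, \ldots, z_{k+1}$, merging contributions from $\seg_\F(v), \seg_\F(z_1), \ldots, \seg_\F(z_{k+1})$ into a single long $\F_{k+1}$-path of length at most (setting $z_0 := v$)
\[
\sum_{j=0}^{k+1} |\seg_\F(z_j)| \;\leq\; \sum_{j=0}^{k+1} 6^j L_0 \;=\; \tfrac{6^{k+2}-1}{5}\, L_0 \;\leq\; \tfrac{6}{5} \cdot 6^{k+1} L_0,
\]
while the remaining modified $\F_{k+1}$-paths are split-off subpaths of the various $\seg_\F(z_j)$, each of length at most $6^{k+1} L_0$. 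Since $u$ sits on such a split-off subpath of $\seg_\F(z_{k+1})$, its $\F_{k+1}$-path has length at most $6^{k+1} L_0$, so applying Lemma~\ref{lem:small increase paths} with $\F' = \F_{k+1}$, $x = u$, and $t'$ chosen as the largest index $\leq$ the first differing index with $s_{t'} \geq |\seg_{\F_{k+1}}(u)|$ gives $s_{t'} \leq \tfrac{6}{5} \cdot 6^{k+1} L_0$, whence $N_G(u) \subseteq S_{\F_{k+1}}(0, 6^{k+2} L_0)$. Any $r \in N_G(u)$ then lies either in an unchanged $\F$-path (whose $\F$-length equals its $\F_{k+1}$-length, at most $6^{k+2} L_0$) or in one of the affected $\F$-paths $\seg_\F(z_j) \cup \seg_\F(v)$ of length $\leq 6^{k+1} L_0 \leq 6^{k+2} L_0$, which closes the induction.

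The main obstacle is the structural bookkeeping in the inductive step: verifying that the merged $\F_{k+1}$-path truly has length at most $\tfrac{6}{5}\cdot 6^{k+1} L_0$ requires the $\F$-distance-3 separation of pivots (so that the contributions of distinct $\seg_\F(z_j)$ either are disjoint or coincide), and the exact geometric-series identity $\sum_{j=0}^{k+1} 6^j = (6^{k+2}-1)/5$ is precisely what produces the factor of $6$ per step needed to propagate the bound. The translation from $S_{\F_{k+1}}$ to $S_\F$ at the end also requires care: an unchanged $\F_{k+1}$-path has identical length in $\F$, but a modified one may have a very different $\F_{k+1}$-length, so one must argue via vertex-inclusion into the short affected $\F$-paths $\seg_\F(z_j)$ rather than by matching path lengths directly.
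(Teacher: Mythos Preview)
Your overall strategy—bound the lengths of all $\F$-paths touched by the rotation, then apply Lemma~\ref{lem:small increase paths} to $\F' = \F_{k+1}$ and translate back—is sound, and the geometric-series arithmetic is exactly right. However, the ``key structural observation'' in the inductive step is false as stated. It is \emph{not} true that the broken/added edges assemble into a single long $\F_{k+1}$-path together with split-off subpaths of the individual $\seg_\F(z_j)$. For a small counterexample take $\seg_\F(v)=P_0$ and a single long path $P=a_1\cdots a_{20}$ with $z_1=a_5$, $y_1=a_4$, $z_2=a_{15}$, $y_2=a_{14}$ (so $k+1=2$). In $\F_2$ the two modified paths are $P_0\,v\,a_5\cdots a_{14}$ and $a_1\cdots a_4\,a_{15}\cdots a_{20}$; neither is a split-off subpath of $P$, and the path containing $u=y_2$ includes all of $P_0$. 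In particular your claim ``$u$ sits on a split-off subpath of $\seg_\F(z_{k+1})$, so $|\seg_{\F_{k+1}}(u)|\le 6^{k+1}L_0$'' is unjustified.

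The fix is simple: drop the structural description and keep only the containment. Every modified $\F_{k+1}$-path has its vertex set inside $\seg_\F(v)\cup\bigcup_{j}\seg_\F(z_j)$, hence has length at most $\sum_{j=0}^{k+1}6^jL_0\le\tfrac{6}{5}\,6^{k+1}L_0=:M$; and every $\F$-path that is touched has length $\le 6^{k+1}L_0\le M$. Thus paths of length $>M$ coincide in $\F$ and $\F_{k+1}$, so taking $t'$ to be one more than the number of such long paths gives $s_j=\ell_j$ for $j<t'$, $s_{t'}\le M$, and $|\seg_{\F_{k+1}}(u)|\le s_{t'}$ (since $u$ lies on a modified path). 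Now Lemma~\ref{lem:small increase paths} yields $N(u)\subseteq S_{\F_{k+1}}(0,5s_{t'})\subseteq S_{\F_{k+1}}(0,6^{k+2}L_0)$, and your translation back to $S_\F$ goes through unchanged. (Your phrasing ``largest index $\le$ the first differing index with $s_{t'}\ge|\seg_{\F_{k+1}}(u)|$'' is doing the same job but is hard to parse; the formulation above is cleaner.)

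For comparison, the paper avoids analysing $\F_{k+1}$ globally by strengthening the induction: it proves simultaneously that $N(u_i)\subseteq S_\F(0,6^{i+1}L_0)$ \emph{and} that the paths of length $>6^{i+1}L_0$ are identical in $\F$ and $\F_{i+1}$, stepping $i$ one at a time. Since one $1$-rotation only changes two paths, this bookkeeping is trivial at each step, and the second invariant does exactly the $S_{\F_i}\to S_\F$ translation for free.
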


\begin{proof}
    Let $\F_k$ be a $k$-rotation of $\F$ and let $u_k$ be its new endpoint and $v=u_0$ its old endpoint. Consider the linear forests $\F=\F_0,\F_1,\F_2,\ldots,\F_k$, where $\F_i$ is a $1$-rotation of $\F_{i-1}$, and where the new endpoint of $\F_i$ is $u_i$, the old endpoint is $u_{i-1}$, and the pivot is $z_i$. We show that the following two conditions hold for every $i\geq 0$:
    \begin{enumerate}[label = (\arabic{enumi})]
        \item\label{ind:neighbourhood} $N(u_i)\subseteq S_\F(0,6^{i+1}|\seg_\F(v)|)$ 
        \item\label{ind:large path lengths} The paths of length $>6^{i+1}|\seg_\F(v)|$ are the same in $\F$ and $\F_{i+1}$.
    \end{enumerate}
    Hence, the particular case $i=k$ proves the statement of the lemma. For the base case $i=0$, let $\ell_1,\ldots,\ell_t$ be the lengths of the paths in $\F$ in decreasing order. Let $t'$ be such that $v$ is in a path of length $\ell_{t'}$.
    Note that by \Cref{lem:small increase paths} applied to $\F=\F'$ with $t'$, we know that $v$ only has neighbours in $S_{\F}(0,5|\seg_\F(v)|)\subseteq S_{\F}(0,6|\seg_\F(v)|)$, as required for part~\ref{ind:neighbourhood}. In the resulting linear forest $\F_1$ only two paths have changed compared to $\F$ -- one of length $|\seg_{\F}(v)|$ and one of length at most $5|\seg_{\F}(v)|$, and so we obtained two new paths of length at most $6|\seg_{\F}(v)|$ each. Hence, all the paths in $\F_1$ of length larger than $6|\seg_{\F}(v)|$ are the same as in $\F$, proving part~\ref{ind:large path lengths}.

    Suppose now the claim holds for all $j<i$, and let us prove it for $i$. Note that the first condition implies that all the pivots $z_j$ for $j\leq i$ are in $S_{\F}(0,6^j|\seg_\F(v)|)$, as each $z_j$ lives in $N(u_{j-1})$. In particular, $z_i\in S_{\F}(0,6^i|\seg_\F(v)|)$, meaning that $u_i\in S_{\F}(0,6^i|\seg_\F(v)|)$, because either $z_i=u_i$ or the broken edge $z_iu_i$ is in $\F$, so $u_i$ and $z_i$ are on the same path in $\F$. 
    Additionally, note that by the induction hypothesis (part~\ref{ind:large path lengths}) all the paths of length larger than $6^{i}|\seg_\F(v)|$ are the same in $\F$ and $\F_i$. Hence, since $u_i\in S_{\F}(0,6^i|\seg_\F(v)|)$, then $u_i\in S_{\F_i}(0,6^i|\seg_\F(v)|)$. Therefore we can apply \Cref{lem:small increase paths} to $\F,\F_i$ and $u_i$ to get that $N(u_i)\subseteq S_{\F_i}(0,5|\seg_{\F_i}(u_i)|)$, which by the previous discussion gives that $N(u_i)\subseteq S_{\F_i}(0,5\cdot 6^{i}|\seg_{\F}(v)|)$.
    Now, we show that~\ref{ind:large path lengths} holds. Note that, in order to get $\F_{i+1}$, in $\F_i$ the path which contains $u_i$ and a path which contains a vertex in $N(u_i)$ are replaced with two paths on the same set of vertices. Thus the two newly obtained paths are of length at most $6^i|\seg_\F(v)|+5\cdot6^i|\seg_\F(v)|\leq 6^{i+1}|\seg_\F(v)|$. As the paths of length $>6^i|\seg_\F(v)|$ are the same in $\F$ and $\F_i$, this means that the paths of length $>6^{i+1}|\seg_\F(v)|$ are the same in $\F$ and $\F_{i+1}$, completing part~\ref{ind:large path lengths}.

    For part~\ref{ind:neighbourhood}, recall that $N(u_i)\subseteq S_{\F_i}(0,5 \cdot 6^{i}|\seg_{\F}(v)|)$.
    Thus, $N(u_i)\subseteq S_{\F}(0,6^{i+1}|\seg_\F(v)|)$. Indeed, otherwise a vertex $x\in N(u_i)$ is in a path of length $>6^{i+1}|\seg_\F(v)| \geq 6^{i}|\seg_\F(v)|$ in $\F$, but in $\F_i$ it is in a path of length at most $6^{i+1}|\seg_\F(v)|$. By the induction hypothesis for part~\ref{ind:large path lengths} for $i-1$, the paths of length $> 6^{i}|\seg_\F(v)|$ are the same in $\F$ and $\F_i$ and thus, this is a contradiction. This completes the proof.
\end{proof}

\noindent Next, we show that in a $<_{\lex}$-minimal spanning linear forest with quite a lot of paths, only a small proportion of the vertices can lie in very long paths.

\begin{lem}\label{lem:not many long paths}
Let $C > 10^{10}$. Let $\F$ be a $<_{\lex}$-minimal spanning linear forest in an $n$-vertex $C$-expander $G$. Suppose that the number of paths in $\F$ is at least $n^{\varepsilon}$, for some $\varepsilon>0$.
Then, $|S(6n^{1-\varepsilon+2\log_C6},n)|\leq n/2C$.
\end{lem}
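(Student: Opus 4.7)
The plan is to assume for contradiction that $|S_\F(M, n)| > n/(2C)$ with $M := 6n^{1-\varepsilon+2\log_C 6}$, and produce an edge forbidden by the $C$-expansion of $G$. First, since $\F$ has at least $n^{\varepsilon}$ paths covering $n$ vertices, pigeonhole gives a path of length (number of vertices) at most $\ell \leq n^{1-\varepsilon}$; let $v$ be an endpoint of such a shortest path.

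Next I would use rotations to generate many potential new endpoints starting from $v$. The set $V(G)$ is trivially an $(\F,C)$-expander, since its $\F$-interior is $V(G)$ itself and $G$ is a $C$-expander, so Lemma~\ref{Lem_endpoints_in_expander} applied with $U = V(G)$ and $k := 2\log_C n$ produces a set $E := E^k_{V(G)}(v,\F)$ with $|E| \geq n/10^5 \geq n/(2C)$ (using $C > 10^{10}$).

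The key step is Lemma~\ref{Lemma_segment_lengths_after_rotation}, which I would use to confine $E$ and its $G$-neighborhood to short paths of $\F$. Since $6^{k+1} = 6n^{2\log_C 6}$ and $\ell \leq n^{1-\varepsilon}$, Lemma~\ref{Lemma_segment_lengths_after_rotation} yields $N_G(E) \subseteq S_\F(0, 6^{k+1}\ell) \subseteq S_\F(0, M)$. Inspecting the proof of that lemma---specifically the intermediate fact that each new endpoint $u_i$ lies in $S_\F(0, 6^i|\seg_\F(v)|)$---also gives $E \subseteq S_\F(0, 6^k\ell) \subseteq S_\F(0, M/6)$, so $E$ is disjoint from $S_\F(M, n)$.

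Finally, $E$ and $S_\F(M, n)$ would then be disjoint sets of size at least $n/(2C)$, so Definition~\ref{def:expander}(b) would force an edge between them, placing a vertex of $S_\F(M, n)$ into $N_G(E) \subseteq S_\F(0, M)$---essentially the desired contradiction. The main technical care is the boundary case of paths of length exactly $M$, where the ambient bounds only say ``$\leq M$'' and ``$\geq M$''; I would resolve it by extracting the sharper factor-$5$ bound already implicit in Lemma~\ref{lem:small increase paths} (which feeds into the inductive step of Lemma~\ref{Lemma_segment_lengths_after_rotation}), yielding $N_G(E) \subseteq S_\F(0, 5\cdot 6^k\ell) \subsetneq S_\F(0, M)$, strictly disjoint from $S_\F(M, n)$.
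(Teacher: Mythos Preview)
Your proposal is correct and follows essentially the same approach as the paper: pick an endpoint $v$ of a shortest path, apply Lemma~\ref{Lem_endpoints_in_expander} with $U=V(G)$ and $k=2\log_C n$ to get $|E^k_{V(G)}(v,\F)|\geq n/2C$, and then combine Lemma~\ref{Lemma_segment_lengths_after_rotation} with the expander edge condition to reach a contradiction. The paper packages the disjointness of $E$ from $S_\F(M,n)$ as a separate case split (corresponding to your appeal to the intermediate fact $u_i\in S_\F(0,6^i|\seg_\F(v)|)$ inside the proof of Lemma~\ref{Lemma_segment_lengths_after_rotation}), and it glosses over the boundary issue at paths of length exactly $M$ that you handle explicitly via the factor~$5$; otherwise the arguments coincide.
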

\begin{proof}
Since we have $n^\varepsilon$ paths, there is one of length at most $n^{1-\varepsilon}$. Set $k:=2\log_{C}n$, let $v$ be an endpoint of a shortest path, which is of length $m \leq n^{1-\varepsilon}$, and suppose for contradiction that $| S(6mn^{2\log_C6},n)|> n/2C$.
By Lemma~\ref{Lem_endpoints_in_expander}, we have $|E^k_G(v,\F)|\geq n/2C$. 
We now have two cases. If $E^k_G(v,\F)\cap S(6mn^{2\log_C6},n)\neq \emptyset$, then let $i$ be the smallest such that there exists $x\in E^i_G(v,\F)\cap S(6mn^{2\log_C6},n)$. Let $y$ be the pivot for $x$ in the corresponding $i$-rotation and recall that $x,y$ are in the same path in $\F$, hence $y\in S(6mn^{2\log_C6},n)$ as well. Note also that $y\in N(E^{i-1}_G(v,\F))$, so that $S(6mn^{2\log_C6},n) \cap N(E^{i-1}_G(v,\F)) \neq \emptyset$.
On the other hand, if $E^k_G(v,\F)\cap S(6mn^{2\log_C6},n) = \emptyset$, then by the definition of a $C$-expander, we have that $N(E^k_G(v,\F))\cap S(6mn^{2\log_C6},n) \neq \emptyset$. 
Both of these cases contradict Lemma~\ref{Lemma_segment_lengths_after_rotation}, which says that for $i\leq k$ we have
\[N(E^i_G(v,\F))\subseteq S(0, 6^{k+1}|\seg_{\F}(v)|)=S(0, 6^{1+2\log_Cn}m)=S(0, 6^{1+2\log_6n/\log_6C}m)=S(0, 6mn^{2\log_C6}).
\]
Hence, $|S(6n^{1-\varepsilon+2\log_C6},n)|\leq | S(6mn^{2\log_C6},n)| \leq n/2C$.
\end{proof}

\noindent Finally, then, in this section, we can put this all together to prove Lemma~\ref{lem:smallforest}.

\begin{proof}[ of Lemma~\ref{lem:smallforest}]
Let $\F$ be a $<_{\lex}$-minimal spanning forest in $G$. First we will show that it must contain at most $n/C$ paths. Indeed, suppose otherwise and let $S$ be a set which contains exactly one endpoint from each path in $\F$. Split $S$ into disjoint sets $S=S_1\cup S_2$, both of size $|S_1|,|S_2|\geq n/2C$. Since $G$ is a $C$-expander, there is an edge between $S_1$ and $S_2$. Adding this edge to $\F$ creates a linear forest with one fewer path than $\F$, contradicting the $<_{\lex}$-minimality of $\F$.

We now show that $\F$ has no isolated vertices. Again, suppose otherwise and let 
$v$ be an isolated vertex in $\F$. By Lemma~\ref{lem:induction_for_no_isolated_lemma}, for all $k$, we have that  $E^k(v,\F)\subseteq \End(\F)$. Lemma~\ref{Lem_endpoints_in_expander} then tells us that $|\End(\F)|\geq |E^{2 \log_C n}(v,\F)| \geq n/10^5 > 2n/C$. This contradicts the previous assertion that $\F$ has at most $n/C$ paths.

Now that we know that $\F$ has 
at most $n/C$ paths and
no isolated vertices, it must be that at most $100n/C$ vertices in $\F$ are contained in paths of length less than $100$.
To conclude, assume for contradiction that this forest has more than $n^{0.8}$ paths.
Now, apply Lemma~\ref{lem:not many long paths} with $\varepsilon=0.8$ to conclude that $\F$ satisfies $|S_\F(6n^{1-\varepsilon+2\log_C6 },n)|\leq n/2C$. Since $1-\varepsilon+2\log_C6 <1/2$, the number of vertices in paths longer than $\sqrt{n}$ is less than $n/2C<n/2$.
Hence, at least $n/2-100n/C\geq 0.1n$ of the vertices are in $S_{\F}(100,\sqrt{n})$. Now, apply Lemma \ref{lem:generalpathforestrot} to $\F$ to obtain a new linear forest with fewer paths, contradicting the $<_{\lex}$-minimality of $\F$. 
\end{proof}

\section{Linking structures and decomposing the expander graph}\label{sec:linking}
In this section, we will prove the main result we need which will find an appropriate linking structure in an expander. For convenience for our application, we will additionally find relatively long paths connecting the two special sets in the linking structure, as in the following result.
\begin{lem}\label{lem:partition:new}
Let $C > 10^{15}$, let $n$ be sufficiently large and let $G$ be a $C$-expander. Then, there is a partition of $V(G)$ into three sets $X,Y,Z$, and disjoint sets $A,B\subset X$, with the following properties.
\begin{itemize}
    \item $G[X]$ is an $(A,B)$-linking structure and $|A|=|B|=n^{0.9}$.
    \item $G[Y \cup A \cup B]$ has a spanning linear forest with $|A| = |B|$ paths of size $n^{0.1}/5$ whose endpoints are in $A \cup B$.
    \item $G[Z]$ and $G[Z \cup Y \cup A \cup B]$ are both $C/100$-expanders.
\end{itemize}
\end{lem}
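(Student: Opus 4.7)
The plan is to build the partition in three stages: first construct the linking structure directly using expansion; then randomly carve out the remaining graph into $Y$ and $Z$ preserving expansion; and finally pack the long paths in $Y \cup A \cup B$. For Stage 1, following Hyde, Morrison, Müyesser, and Pavez-Signé, I would embed an AKS-type sorting network of width $n^{0.9}$ and depth $O(\log n)$ directly into $G$. Each comparator becomes a constant-size switching gadget with two equal-length perfect matchings between its two input and two output vertices; the gadgets are embedded greedily one at a time, each step reducing to finding a bounded-size subgraph with a few prescribed edges, which is immediate from the $C$-expansion of $G$ minus the $o(n^{0.95})$ vertices used so far (for these sizes condition (a) of Definition~\ref{def:expander} alone suffices). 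Let $X$ be the union of gadget-vertices and $A,B \subseteq X$ the designated input and output wires, so that $|X| = O(n^{0.9}\log n) = o(n^{0.95})$ and $|A|=|B|=n^{0.9}$. Any bijection $\varphi: A \to B$ then picks out a setting of the comparators that yields vertex-disjoint equal-length paths spanning $X$ realising $\varphi$, which is precisely the $(A,B)$-linking-structure property.

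For Stage 2, partition $V(G) \setminus X$ uniformly at random into $Y$ of size $n/5 - 2n^{0.9}$ and $Z$ of size the remainder. Standard concentration---Chernoff on $|N_G(S) \cap Z|$ for each small $S$, a union bound, and condition (b) of Definition~\ref{def:expander} for larger sets---shows that with positive probability $G[Z]$ and $G[Y \cup A \cup B \cup Z]$ (which differs from $G$ by only the $o(n^{0.95})$ vertices of $X$) are both $C/3$-expanders, leaving an order of magnitude of slack relative to the $C/100$ target. The same argument also gives that $G[Y \cup A \cup B]$ is a $C/3$-expander.

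For Stage 3, note that $|Y \cup A \cup B| = n/5 = n^{0.9} \cdot n^{0.1}/5$ exactly. I would fix an arbitrary perfect matching $\{(a_i,b_i)\}_{i \leq n^{0.9}}$ between $A$ and $B$ and build the paths iteratively: at step $i$, within $G[Y \cup A \cup B]$ minus the previously used vertices (still a strong expander, as only a linear fraction has been removed), apply a standard path-of-prescribed-length result in expanders to produce an $a_i$-to-$b_i$ path on exactly $n^{0.1}/5$ vertices. The $C/100$-expansion conditions on $G[Z]$ and $G[Z \cup Y \cup A \cup B]$ then follow directly from Stage 2. The hard part will be arranging this construction so that the paths \emph{exactly} cover $Y \cup A \cup B$---every path of exactly the prescribed length \emph{and} every vertex of $Y$ consumed. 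I would handle this by reserving a small buffer of vertices inside $Y$ to be distributed among the last few paths via local rerouting or a short absorption argument, balancing vertex counts exactly; this bookkeeping is the main technical point the sketch leaves open.
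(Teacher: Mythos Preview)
Your Stage~2 random-partition argument has a genuine gap that breaks the whole scheme. A $C$-expander with $C$ a fixed constant can have vertices of degree exactly~$C$ (condition~(a) with $|X|=1$ gives only $\delta(G)\ge C$). For such a vertex $v$, the probability that \emph{all} of its neighbours land in $Y$ under your random split is roughly $(1/5)^C$, a fixed positive constant; a union bound over the $n$ vertices therefore fails badly. More generally, for sets $S$ of size $s$ the Chernoff bound gives failure probability $\exp(-\Theta(Cs))$ while there are $\binom{n}{s}\ge (n/s)^s$ such sets, so the union bound requires $C\gg\log n$, which you do not have. Random partitioning simply does not preserve constant-factor expansion in sparse expanders. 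The same issue already bites in Stage~1: embedding gadgets ``greedily'' gives no control over the complement, and removing even $o(n)$ carelessly chosen vertices can kill expansion of singletons.

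The paper sidesteps all of this via the Friedman--Pippenger/Johannsen \emph{extendability} framework (Definition~\ref{def:extendability}, Lemma~\ref{lem:adding a path to extedable graph}). The linking structure $H$ is $(A\cup B)$-path-constructible with paths of length $\Theta(\log n)$, so one embeds it path-by-path while maintaining that the current subgraph is $(C/50,n/2C)$-extendable; this property \emph{directly} forces $G-V(H)$ to be a $C/100$-expander, with no randomness or union bound. Crucially, the paper then \emph{continues} using the same lemma to grow the $n^{0.9}$ paths of length $n^{0.1}/5$ one at a time, still maintaining extendability of $H\cup\mathcal F$, and only \emph{afterwards} defines $Y:=V(\mathcal F)\setminus(A\cup B)$ and $Z$ as the leftover. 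Thus $\mathcal F$ spans $Y\cup A\cup B$ by construction---your acknowledged ``hard part'' of exact coverage and the buffer/absorption bookkeeping evaporate entirely---and extendability of $H\cup\mathcal F$ gives the $C/100$-expansion of $G[Z]$ for free. The order of operations (build paths first, then name the partition) is the key conceptual difference from your plan.
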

\noindent For \cite[Proposition 4.4]{hyde2023spanning}, Hyde, Morrison, M\"uyesser and Pavez-Sign\'e showed that there exist linking structures with convenient properties, which allow for them to be constructed in pseudorandom graphs. Our methods would be content with a linking structure with much smaller sets $A,B$, than in the efficiently constructed linking structure in~\cite{hyde2023spanning}, but we will use the same construction, briefly outlining the main arguments for the sake of completeness.
We will start in Section~\ref{sec:embeddinginexpanders} by recalling some `extendability' methods, before proving in Section~\ref{subsec:linking} that there is a linking structure that can be constructed using these methods, and using this to prove Lemma~\ref{lem:partition:new}.


\subsection{Embedding in expanders}\label{sec:embeddinginexpanders}
\noindent Here we briefly discuss a very useful `extendability' method for embedding sparse graphs in expander graphs. The method combines a technique Friedman and Pippenger~\cite{friedman1987expanding} used to inductively embed trees leaf-by-leaf into expanding graphs with a `roll-back' idea of Johannsen (used to prove Lemma~\ref{lem:adding a path to extedable graph} below). For more on this technique, see~\cite{draganic2022rolling}. The key definition defining a type of `good' embedding is the following.
\begin{defn}\label{def:extendability}Let $D,m$ be positive integers with $D\ge 3$.
Let $G$ be a graph and let $H\subset G$ be a subgraph with $\Delta(H) \leq D$.
Then $H$ is \emph{$(D,m)$-extendable (in $G$)}
if for every $S\subset V(G)$ with $1\le |S|\le 2m$ we have
    \begin{equation}
        |\Gamma_G(S)\setminus V(H)|\ge (D-1)|S|-\sum_{u\in S\cap V(H)}(d_H(u)-1).
    \end{equation}
\end{defn}
\noindent The main result to state here is the following, which allows us to add paths to existing $(D,m)$-extendable subgraphs of an expander such that the resulting subgraph stays $(D,m)$-extendable. Hence, we will be able to embed in expanders any type of sparse graphs which can be recursively built by adding path by path. This lemma is, for example, a weaker version of \cite[Corollary 3.12]{montgomery2019spanning}.
\begin{lem}\label{lem:adding a path to extedable graph} Let $C > 10D$ and let $G$ be an $n$-vertex $C$-expander with a $(D,n/2C)$-extendable subgraph $H$ with $|H| \leq n - 5nD/C - \ell$, where $\ell \geq \log n$. Then, the following hold for all vertices $y \in V(H)$ with $\text{deg}_H(y) \leq D/2$.
\begin{itemize}
    \item There exists a path $P$ in $G$ with endpoint $y$ of length $\ell$ such that all its vertices except for $y$ lie outside of $H$ and $H \cup P$ is $(D,n/2C)$-extendable.
    \item For every $x \in V(H)\setminus \{y\}$ with $\textrm{deg}_H(x) \leq D/2$, there exists an $xy$-path $P$ in $G$ of length $\ell$ such that all its internal vertices lie outside of $H$ and $H \cup P$ is $(D,n/2C)$-extendable.
\end{itemize}
\end{lem}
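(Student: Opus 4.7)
The entire lemma reduces to a single \emph{leaf-extension} fact in the Friedman--Pippenger / Johannsen extendability framework, following \cite{montgomery2019spanning}: if $H'\subseteq G$ is $(D,n/2C)$-extendable with $|H'|\le n-5nD/C$ and $v\in V(H')$ has $\deg_{H'}(v)\le D-1$, then some $u\in N_G(v)\setminus V(H')$ satisfies that $H'\cup\{vu\}$ is also $(D,n/2C)$-extendable. One proves this by observing that the extendability inequality can fail on a set $S$ after adding $vu$ only if $S$ was \emph{tight} for $H'$ (achieved equality), $v\notin S$, and $u\in\Gamma_G(S)\setminus V(H')$; a maximality argument on the union of such tight sets together with the $C$-expansion of $G$ and the slack $5nD/C$ shows that the ``forbidden'' $u$'s number strictly less than $|N_G(v)\setminus V(H')|\ge D-\deg_{H'}(v)\ge 1$, so a good $u$ exists.

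The first bullet is then immediate iteration. Set $v_0:=y$ and $H_0:=H$; having built an extendable $H_i=H\cup\{v_0v_1,\ldots,v_{i-1}v_i\}$ with $i<\ell$, both hypotheses of the claim are available ($|H_i|=|H|+i\le n-5nD/C$ and $\deg_{H_i}(v_i)\le\max(\deg_H(y)+1,2)\le D-1$ by $D\ge 3$ and $\deg_H(y)\le D/2$), so a further edge $v_iv_{i+1}$ can be added while preserving extendability. After $\ell$ such steps, $P:=v_0v_1\cdots v_\ell$ and $H\cup P$ satisfy the first bullet.

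For the second bullet, pick $\ell_x,\ell_y\ge 2\lceil\log n\rceil$ with $\ell_x+\ell_y+1=\ell$. Use the first bullet twice to extend a path $Q_y$ of length $\ell_y-\lceil\log n\rceil$ from $y$ and then a disjoint path $Q_x$ of length $\ell_x-\lceil\log n\rceil$ from $x$, ending at new vertices $y^*$ and $x^*$ respectively and giving an extendable $H^*$. Inside $V(G)\setminus V(H^*)$, grow, again via iterated (now branching) leaf-extensions, two vertex-disjoint trees $T_y$ and $T_x$ rooted at $y^*$ and $x^*$ of depth $\lceil\log n\rceil$; the $C$-expansion forces each set of new leaves to grow by a factor close to $C$, so after $\lceil\log n\rceil$ levels each tree has at least $n/(2C)$ bottom leaves. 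Expander property~(b) supplies an edge $uw\in E(G)$ between a bottom leaf $u$ of $T_y$ and a bottom leaf $w$ of $T_x$; splicing the root-to-leaf paths with $uw$, $Q_y$, and $Q_x$ yields a $yx$-path of length exactly $(\ell_y-\lceil\log n\rceil)+\lceil\log n\rceil+1+\lceil\log n\rceil+(\ell_x-\lceil\log n\rceil)=\ell$ whose interior avoids $V(H)$. Discarding unused parts of $T_y$ and $T_x$, extendability of $H\cup P$ follows from the claim applied at every edge of $P$, with one final invocation for the joining edge $uw$.

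\textbf{Main obstacle.} The delicate step is precisely the last one of the second bullet: matching the target length $\ell$ exactly, and ensuring the single non-leaf edge $uw$ preserves extendability. The length is tuned by the freedom in the choice of $\ell_x,\ell_y$ and of the tree depths; extendability of the join requires repeating the leaf-extension case analysis for an edge between two simultaneous degree-$1$ vertices, again leveraging the $C$-expansion and the slack $5nD/C$ in the hypothesis.
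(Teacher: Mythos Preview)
The paper does not give its own proof of this lemma; it simply records it as a weaker form of Corollary~3.12 in \cite{montgomery2019spanning}. Your sketch follows the same Friedman--Pippenger/Johannsen architecture that underlies Montgomery's argument---iterated leaf extension for the first bullet, two opposing trees joined by an expander edge and then rolled back for the second---so structurally you are aligned with the cited proof.

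There is one concrete slip. You claim that when growing the trees ``the $C$-expansion forces each set of new leaves to grow by a factor close to $C$''. This is false: every vertex of an extendable subgraph must keep degree at most $D$, so an internal tree vertex (having one parent) can acquire at most $D-1$ children via leaf extensions, and one may well have $D\ll C$. The $C$-expansion of $G$ supplies many \emph{candidate} neighbours for the current leaves, but the degree cap in Definition~\ref{def:extendability} prevents attaching more than $D-1$ of them per vertex while keeping the subgraph $(D,n/2C)$-extendable. This does not wreck the approach---with $D\ge 3$ one still has $(D-1)^k\ge 2^k$ leaves at depth $k$, so depth $O(\log n)$ reaches $n/(2C)$ leaves---but your numerics and your length bookkeeping (which as written force $\ell\ge 4\lceil\log n\rceil+1$ rather than the stated $\ell\ge\log n$) need to be redone with branching $D-1$. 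The step you flag as the ``main obstacle'', choosing the joining edge $uw$ so that extendability survives after roll-back, is indeed the heart of the matter and is not merely a repetition of the single-leaf case; in \cite{montgomery2019spanning} it is handled by using the extendability inequality itself (not just expander property~(b)) to locate a \emph{good} such edge among the many available.
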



\subsection{Linking structures in expanders}\label{subsec:linking}
 We first need the following definition, as introduced in \cite{hyde2023spanning}.
\begin{defn}\label{Def_constructible}
    Let $G$ be a graph and $A\subseteq V(G)$. We say that $G$ is \emph{$A$-path-constructible} if there exists a sequence of edge-disjoint paths $P_1,\ldots, P_t$ in $G$ with the following properties.
    \begin{enumerate}
        \item $E(G)=\bigcup_{j\in [t]} E(P_j)$.
        \item  For each $i\in [t]$, the internal vertices of $P_i$ are disjoint from $A\cup \bigcup_{j\in [i-1]}V(P_j)$.
        \item For each $i\in [t]$, at least one of the endpoints of $P_i$ belongs to $A\cup \bigcup_{j\in [i-1]}V(P_j)$.
    \end{enumerate}    
We will say that $G$ is $A$-path-constructible \emph{with paths of length between $\ell_1$ and $\ell_2$} if all the paths $P_t$ have lengths between $\ell_1$ and $\ell_2$.
\end{defn}
\noindent The crucial property of path-constructible graphs is simple to observe given Section \ref{sec:embeddinginexpanders}: if they have low maximum degree and the paths $P_i$ are sufficiently long (that is, of size $\Omega(\log n)$), then Lemma \ref{lem:adding a path to extedable graph} implies that they can be embedded in expanders (and, in particular, in pseudorandom graphs, as done in \cite{hyde2023spanning}). As mentioned before, we now state the main lemma for finding our linking structure, which is a weaker version of  \cite[Proposition 4.4]{hyde2023spanning}. 
\begin{lem}\label{lem:linkingstructure}
For all sufficiently large $N$, there exists a graph $H$ with at most $N^{1.1}$ vertices and disjoint sets $A,B \subseteq V(H)$ such that $H$ is an $(A,B)$-linking structure and the following hold.
\begin{enumerate}
    \item $|A|= |B| = N$ and $A \cup B$ is an independent set in $H$.
    \item $\Delta(H) \leq 4$.
    \item $H$ is $(A \cup B)$-path-constructible with paths of length between $10 \log N$ and $40 \log N$.
\end{enumerate}
\end{lem}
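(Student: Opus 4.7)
The plan is to build $H$ as a gadget-replaced subdivision of a Beneš-style permutation network, following the construction of Hyde, Morrison, M\"uyesser and Pavez-Sign\'e. Recall that a Beneš network on $N$ inputs is a layered network of depth $D = 2\lceil \log_2 N\rceil - 1$ using $O(N\log N)$ two-in/two-out switches, each settable to ``pass-through'' or ``swap'', which together can realise any bijection from inputs to outputs by vertex-disjoint routings. I would take $D+1$ rows of $N$ ``rail'' vertices (the top row being $A$, the bottom row $B$), join consecutive rows through switches according to the Bene\v{s} topology, replace each switch by a constant-size gadget such as a $4$-cycle whose two ``top'' and two ``bottom'' vertices admit both a pass-through pairing and a swap pairing of exactly equal length, and finally subdivide every wire by a path of $\ell$ fresh vertices, where $\ell$ is a fixed constant multiple of $\log N$ chosen so that every $A$-to-$B$ route has the same length and that length lies in $[10\log N, 40\log N]$.

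I would then verify the three properties in turn. The total vertex count is $O(N\log N)$ gadget vertices plus $O(N\log^2 N)$ subdivision vertices, which is at most $N^{1.1}$ for large $N$, and $|A|=|B|=N$ by construction. Every non-$(A\cup B)$ vertex has degree at most $4$ (the only degree-$4$ vertices are the four corners of the switch-gadgets), while vertices of $A \cup B$ have degree $1$; since the unique neighbour of each $A\cup B$ vertex is an internal subdivision vertex, $A \cup B$ is independent. For the linking property, given any bijection $\varphi:A\to B$, classical Bene\v{s} routing yields a switch-setting realising $\varphi$; translating it into the gadget language produces $N$ vertex-disjoint $A$-to-$B$ paths that together traverse every edge of $H$ and therefore cover every vertex, and the equal-length property of the switch-gadgets guarantees these paths share a common length.

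For the final item, I would establish $(A\cup B)$-path-constructibility by listing the subdivided wires in the natural order that proceeds row by row from the top, adding the wires of each successive layer (together with the adjacent halves of their switch-gadgets) to the already-built subgraph one path at a time. At each step, the new subgraph to be added is a single path whose one endpoint is a rail vertex already placed in the previous layer (or in $A$, for the very first step), whose other endpoint is a new rail vertex, and whose internal vertices are all fresh subdivision and gadget vertices; this exactly matches the three conditions in Definition~\ref{Def_constructible}. By choosing $\ell$ and the size of the gadget to be small fixed multiples of $\log N$, every path added in this decomposition has length in $[10\log N, 40\log N]$.

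The main obstacle is the simultaneous design of the switch-gadget and the subdivision parameter: the gadget must (i) afford vertex-disjoint routings of the same length in both its pass and swap configurations, (ii) contribute no vertex of degree greater than $4$, and (iii) be compatible with the layer-by-layer path decomposition used for constructibility. A single $C_4$ (or any small balanced bipartite gadget) handles (i)--(ii); for (iii) one adds each gadget in two steps, first the ``entering'' half (one wire plus two of its vertices) and then the ``exiting'' half (the complementary wire), so that every step is a single path of the prescribed length, and a careful choice of $\ell$ rounds all final path lengths into the window $[10\log N, 40\log N]$.
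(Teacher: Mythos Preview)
Your high-level approach is exactly that of the paper (and of Hyde--Morrison--M\"uyesser--Pavez-Sign\'e): take a logarithmic-depth permutation network (the paper uses the Ajtai--Koml\'os--Szemer\'edi sorting network, but a Bene\v{s} network works just as well), and replace the switches by gadgets and the single wires by long paths. The gap is in the gadget itself: the $C_4$ you propose cannot simultaneously satisfy the linking property and the path-constructibility with long paths, and this is precisely where the real content of the construction lies.

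The linking-structure definition requires that, for \emph{every} bijection $\varphi$, the $N$ routed paths cover \emph{every vertex} of $H$. In a $C_4$ on $\{a_1,a_2,b_1,b_2\}$ the pass-through routing uses only the two edges $a_1b_1,a_2b_2$, so if any edge of the gadget is subdivided then the subdivision vertices on $a_1b_2,a_2b_1$ are left uncovered; your assertion that the routed paths ``together traverse every edge of $H$'' is false for this gadget. Conversely, if the $C_4$ is left unsubdivided, then path-constructibility with paths of length $\ge 10\log N$ fails: however you thread the wires through, once two of the four $C_4$-edges have been absorbed into long paths, the remaining two cross-edges $a_1b_2,a_2b_1$ have both endpoints already placed and nothing to append, so they can only be added as paths of length $1$. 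No combination of ``entering half / exiting half'' avoids stranding two short edges.

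The paper resolves this with a much larger, carefully designed gadget (Figure~\ref{fig:gadget}) of size $\Theta(\log^2 N)$: two long horizontal paths give the pass-through routing, and a system of cross-edges (the full lines in the figure form a single even cycle) is arranged so that two zig-zagging paths of the \emph{same} length give the swap routing while still covering every vertex. The gadget is $\{a_1,a_2\}$-path-constructed by first adding one dotted path from $a_1$, then going around the central even cycle in $\Theta(\log N)$-length steps, and finally attaching the remaining dotted paths --- every step has length in $[10\log N,40\log N]$. Once you substitute this gadget for your $C_4$, the rest of your sketch (vertex count $O(N\log^3 N)\le N^{1.1}$, degree bound $\le 4$, layer-by-layer assembly, independence of $A\cup B$) goes through essentially as you wrote it.
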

\noindent In order to obtain their stronger version of Lemma \ref{lem:linkingstructure}, the authors in \cite{hyde2023spanning} rely on a result concerning sorting networks with optimal depth. Earlier uses of sorting networks in extremal combinatorics include in the work of K\"uhn, Lapinskas, Osthus and Patel~\cite{kuhn2014proof} on Hamilton cycles in highly connected tournaments and in the work of M\"{u}yesser and Pokrovskiy \cite{muyesser2022random} giving, among other results, a combinatorial proof of the Hall-Paige conjecture. We will define directly the graph theoretic counterpart of a sorting network, for convenience calling this itself a \emph{sorting network}. For a more detailed discussion concerning sorting networks and their graph theoretic counterpart see, for example, \cite[Section 4]{hyde2023spanning}.
\begin{defn}
A graph $G$ is an \emph{$(N,\ell)$-sorting network} if there exist $A,B \subseteq V(G)$ for which $G$ is an $(A,B)$-linking structure and disjoint sets $V_0=A,V_1, \ldots,V_{\ell-1},V_\ell= B$ with the following properties.
\begin{enumerate}
\item $V(G)=V_0 \cup \ldots \cup V_\ell$.
\item For every $0\leq i\leq \ell$, $V_i$ is an independent set in $G$ with $|V_i| = |A| = |B| = N$.
\item For every $0\leq i\leq \ell$, the bipartite graph $G[V_i,V_{i+1}]$ is the disjoint union of $K_{2,2}$'s and edges.
\end{enumerate}
\end{defn}
\noindent  
Sorting networks which are efficient enough for our purposes are simple to construct, and results as early as 1959 (\cite{shell1959high}) imply that there exist $(N,O(\log^2 N))$-sorting networks. Moreover, in 1983, Ajtai, Koml\'os and Szemer\'edi~\cite{ajtai19830} showed that, for each $N$, there exist $(N,\ell)$-sorting networks with $\ell=O(\log N)$, where $\ell$ can easily seen to be optimal up to a constant multiple.
The sorting network in the above definition is not necessarily the linking structure we require for Lemma~\ref{lem:linkingstructure} --  although it has a low maximum degree, it might not be $(A \cup B)$-path-constructible. As observed in \cite{hyde2023spanning}, it is possible to `transform' a sorting network into an $(A \cup B)$-path-constructible linking structure. Briefly, to do this, start by taking the graph $G$ with the properties above with $|A| = |B| = N$ and $\ell = O(\log N)$. Then, for each $0\leq i<\ell$, take the bipartite graph $G[V_i,V_{i+1}]$ and, using part 3 of the definition above, substitute each single edge by a path of an appropriately chosen length $m$ and each $K_{2,2}$ with vertices $a_1,a_2 \in V_i$ and $b_1,b_2 \in V_{i+1}$ by the $(2m)$-vertex graphs in Figure~\ref{fig:gadget}.
\begin{figure}[!ht]
\begin{center}
\begin{tikzpicture}[xscale = 0.9]
\draw [line width=1pt, color=red] (-4,6)-- (-3.5,4);
\draw [line width=1pt, color=blue] (-4,4)-- (-2.5,6);
\draw [line width=1pt, color=red] (-1,6)-- (-2.5,4);
\draw [line width=1pt, color=blue] (-3.5,6)-- (-2,4);
\draw [line width=1pt, color=blue] (0.5,6)-- (-1,4);
\draw [line width=1pt, color=red] (-2,6)-- (-0.5,4);
\draw [line width=1pt, color=blue] (-0.5,6)-- (1,4);
\draw [line width=1pt, color=red] (1.5,5)-- (0.5,4);
\draw [line width=1pt, color=red] (1.5,5.5)-- (1,6);
\draw [line width=1pt, color=blue] (3.5,5)-- (4.5,4);
\draw [line width=1pt, color=blue] (3.5,5.5)-- (4,6);
\draw [line width=1pt, color=red] (4,4)-- (5.5,6);
\draw [line width=1pt, color=red] (6,4)-- (4.5,6);
\draw [line width=1pt, color=red] (7,4)-- (8.5,6);
\draw [line width=1pt, color=red] (9,4)-- (7.5,6);
\draw [line width=1pt, color=blue] (5.5,4)-- (7,6);
\draw [line width=1pt, color=blue] (7.5,4)-- (6,6);
\draw [line width=1pt, color=blue] (8.5,4)-- (9,6);

\draw [fill=black] (2,5) circle (1pt);
\draw [fill=black] (2.5,5) circle (1pt);
\draw [fill=black] (3,5) circle (1pt);

\begin{scriptsize}

\draw [line width=1pt,dash pattern=on 2pt off 2pt, color=blue] (-6,4)-- (-4,4);
\draw [line width=1pt] (-4,4)-- (-3.5,4);
\draw [line width=1pt,dash pattern=on 2pt off 2pt, color=red] (-3.5,4)-- (-2.5,4);
\draw [line width=1pt] (-2.5,4)-- (-2,4);
\draw [line width=1pt,dash pattern=on 2pt off 2pt, color=blue] (-2,4)-- (-1,4);
\draw [line width=1pt] (-1,4)-- (-0.5,4);
\draw [line width=1pt,dash pattern=on 2pt off 2pt, color=red] (-0.5,4)-- (0.5,4);
\draw [line width=1pt] (0.5,4)-- (1,4);
\draw [line width=1pt,dash pattern=on 2pt off 2pt, color=blue] (1,4)-- (1.5,4);

\draw [line width=1pt,dash pattern=on 2pt off 2pt, color=red] (11,4)-- (9,4);
\draw [line width=1pt] (9,4)-- (8.5,4);
\draw [line width=1pt,dash pattern=on 2pt off 2pt, color=blue] (8.5,4)-- (7.5,4);
\draw [line width=1pt] (7.5,4)-- (7,4);
\draw [line width=1pt,dash pattern=on 2pt off 2pt, color=red] (7,4)-- (6,4);
\draw [line width=1pt] (6,4)-- (5.5,4);
\draw [line width=1pt,dash pattern=on 2pt off 2pt, color=blue] (5.5,4)-- (4.5,4);
\draw [line width=1pt] (4.5,4)-- (4,4);\draw [line width=1pt,dash pattern=on 2pt off 2pt, color=red] (4,4)-- (3.5,4);

\draw [line width=1pt,dash pattern=on 2pt off 2pt, color=red] (-6,6)-- (-4,6);
\draw [line width=1pt] (-4,6)-- (-3.5,6);
\draw [line width=1pt,dash pattern=on 2pt off 2pt, color=blue] (-3.5,6)-- (-2.5,6);
\draw [line width=1pt] (-2.5,6)-- (-2,6);
\draw [line width=1pt,dash pattern=on 2pt off 2pt, color=red] (-2,6)-- (-1,6);
\draw [line width=1pt] (-1,6)-- (-0.5,6);
\draw [line width=1pt,dash pattern=on 2pt off 2pt, color=blue] (-0.5,6)-- (0.5,6);
\draw [line width=1pt] (0.5,6)-- (1,6);
\draw [line width=1pt,dash pattern=on 2pt off 2pt, color=red] (1,6)-- (1.5,6);

\draw [line width=1pt,dash pattern=on 2pt off 2pt, color=blue] (11,6)-- (9,6);
\draw [line width=1pt] (9,6)-- (8.5,6);
\draw [line width=1pt,dash pattern=on 2pt off 2pt, color=red] (8.5,6)-- (7.5,6);
\draw [line width=1pt] (7.5,6)-- (7,6);
\draw [line width=1pt,dash pattern=on 2pt off 2pt, color=blue] (7,6)-- (6,6);
\draw [line width=1pt] (6,6)-- (5.5,6);
\draw [line width=1pt,dash pattern=on 2pt off 2pt, color=red] (5.5,6)-- (4.5,6);
\draw [line width=1pt] (4.5,6)-- (4,6);

\draw [line width=1pt,dash pattern=on 2pt off 2pt, color=blue] (3.5,6)-- (4,6);
\draw [fill=black] (11,6) circle (2.5pt);
\draw [fill=black] (9,6) circle (2.5pt);
\draw [fill=black] (8.5,6) circle (2.5pt);
\draw [fill=black] (7.5,6) circle (2.5pt);
\draw [fill=black] (7,6) circle (2.5pt);
\draw [fill=black] (6,6) circle (2.5pt);
\draw [fill=black] (5.5,6) circle (2.5pt);
\draw [fill=black] (4.5,6) circle (2.5pt);
\draw [fill=black] (4,6) circle (2.5pt);
\draw [fill=black] (-6,6) circle (2.5pt);
\draw [fill=black] (-4,6) circle (2.5pt);
\draw [fill=black] (-3.5,6) circle (2.5pt);
\draw [fill=black] (-2.5,6) circle (2.5pt);
\draw [fill=black] (-2,6) circle (2.5pt);
\draw [fill=black] (-1,6) circle (2.5pt);
\draw [fill=black] (-0.5,6) circle (2.5pt);
\draw [fill=black] (0.5,6) circle (2.5pt);
\draw [fill=black] (1,6) circle (2.5pt);
\draw [fill=black] (0.5,4) circle (2.5pt);
\draw [fill=black] (1,4) circle (2.5pt);
\draw [fill=black] (11,4) circle (2.5pt);
\draw [fill=black] (9,4) circle (2.5pt);
\draw [fill=black] (8.5,4) circle (2.5pt);
\draw [fill=black] (7.5,4) circle (2.5pt);
\draw [fill=black] (7,4) circle (2.5pt);
\draw [fill=black] (6,4) circle (2.5pt);
\draw [fill=black] (5.5,4) circle (2.5pt);
\draw [fill=black] (4.5,4) circle (2.5pt);
\draw [fill=black] (4,4) circle (2.5pt);
\draw [fill=black] (-6,4) circle (2.5pt);
\draw [fill=black] (-4,4) circle (2.5pt);
\draw [fill=black] (-3.5,4) circle (2.5pt);
\draw [fill=black] (-2.5,4) circle (2.5pt);
\draw [fill=black] (-2,4) circle (2.5pt);
\draw [fill=black] (-1,4) circle (2.5pt);
\draw [fill=black] (-0.5,4) circle (2.5pt);

\node[color=black, scale = 1.5]  at (-6.4,6) {$a_1$};
\node[color=black, scale = 1.5]  at (-6.4,4) {$a_2$};
\node[color=black, scale = 1.5]  at (11.4,6) {$b_1$};
\node[color=black, scale = 1.5]  at (11.4,4) {$b_2$};

\end{scriptsize}
\end{tikzpicture}
\caption{Each $K_{2,2}$ in the sorting network with parts $\{a_1,a_2\}$ and $\{b_1,b_2\}$ is substituted with a graph of the type above. Each full line represents an edge and each dotted line represents a path of an appropriately chosen length. The pattern in the figure is such that we can always construct such a graph with any number of marked vertices (i.e., those drawn with a circle in the figure) which is sufficiently large and divisible by $4$. Note that the union of the full lines is simply an even  cycle.} 
\label{fig:gadget}
\end{center}
\end{figure}
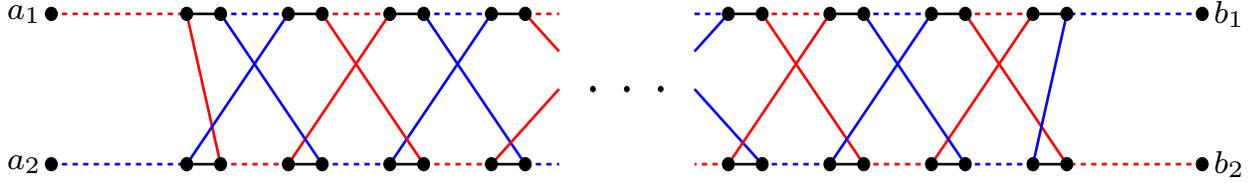

\noindent Notice that the graph in Figure~\ref{fig:gadget} is an $(\{a_1,a_2\},\{b_1,b_2\})$-linking structure. Indeed, the two horizontal paths  are vertex disjoint paths linking $a_1$ to $b_1$ and $a_2$ to $b_2$, and they span all the vertices, and the blue and red paths are vertex disjoint paths linking $a_1$ to $b_2$ and $a_2$ to $b_1$, and they span all the vertices. Since each of these graphs is a linking structure, substituting each $K_{2,2}$ in each $G[V_i,V_{i+1}]$ with one of these graphs and each edge in each $G[V_i,V_{i+1}]$ with a path, maintains that $G$ is an $(A,B)$-linking structure. 

The crucial property of the graph  in Figure~\ref{fig:gadget} is that it is also $\{a_1,a_2\}$-path-constructible (as well as $\{a_1,a_2,b_1,b_2\}$-path-constructible). Indeed, it can be constructed by first taking $a_1$ and adding its incident path represented by a dotted line, then, constructing the even cycle in the middle formed by the full lines (which represent edges), and, finally, adding all of the remaining paths represented by dotted lines. Furthermore, substituting each $K_{2,2}$ in each $G[V_i,V_{i+1}]$ with one of the graphs in Figure~\ref{fig:gadget} and each edge in each $G[V_i,V_{i+1}]$ with a path, transforms $G$ into an $A \cup B$-path-constructible graph. 

We can now complete our sketch proof of Lemma~\ref{lem:linkingstructure} by taking the illustrated graph to be such that it has between $10 \log N$ and $40 \log N$ marked vertices (which as is explained in the caption of the figure is possible) and the dotted lines to represent equally-sized paths with length also between $10 \log N$ and $40 \log N$. We then have that the illustrated graph has $2m = \Theta(\log^2 N)$ vertices and it is $\{a_1,a_2\}$-path-constructible (and $\{a_1,a_2,b_1,b_2\}$-path-constructible) with paths of length between $10 \log N$ and $40 \log N$. Then, substituting each $K_{2,2}$ in each $G[V_i,V_{i+1}]$ with one of these illustrated graphs and each edge in each $G[V_i,V_{i+1}]$ with a path of length $m$ makes it so that the resulting graph is an $(A,B)$-linking structure with $O(N \log^3 N) \leq N^{1.1}$ vertices and is $(A \cup B)$-path-constructible with paths of length between $10 \log N$ and $40 \log N$. Thus, it satisfies Lemma \ref{lem:linkingstructure}.

Using Lemma~\ref{lem:linkingstructure}, Lemma~\ref{lem:adding a path to extedable graph} and Definition~\ref{def:extendability}, we can now prove Lemma \ref{lem:partition:new}.
\begin{proof}[ of Lemma~\ref{lem:partition:new}]
First, we will find $A,B \subseteq V(G)$ and a subgraph $H \subseteq G$ such that $H$ is an $(A,B)$-linking structure with the following properties.
\begin{enumerate}
    \item $|A| = |B| = n^{0.9}$.
    \item $|H| \leq n/100$.
    \item $\Delta(H) \leq 4$.
    \item $H$ is $(C/50,n/2C)$-extendable.

\end{enumerate}
We will then set $X := V(H)$. Let then $N:= n^{0.9}$ and apply Lemma \ref{lem:linkingstructure} to find a linking structure with the given properties. Now we embed $H$ in $G$ so that it is extendable. First, we embed $H[A \cup B]$ in $G$ such that $H[A \cup B]$ is $(C/50,n/2C)$-extendable. For this, note that Lemma \ref{lem:adding a path to extedable graph} implies that there exists a path $P$ in $G$ of length $|A| + |B|$ which is $(C/50,n/2C)$-extendable. Indeed, we can start with a $(C/50,n/2C)$-extendable edge $xy$ in $G$ (since $G$ is a $C$-expander, every edge is $(C/50,n/2C)$-extendable) and then apply the lemma with $\ell = |A| + |B|$. Since $P$ is $(C/50,n/2C)$-extendable, the subgraph on $V(P)$ formed by removing all edges (i.e., the empty subgraph with vertex set $V(P)$) is also $(C/50,n/2C)$-extendable and hence, we can partition $P$ into two sets $A,B$ of equal size and we have an embedding of $H[A \cup B]$ in $G$ which is $(C/50,n/2C)$-extendable.

Since $H$ is $A \cup B$-path-constructible, there exists a sequence of paths $P_1, \ldots, P_t$ and graphs $H_0 := H[A \cup B],H_1, \ldots, H_{t} := H$ such that for each $i$, the following hold: $H_{i+1} = H_i \cup P_{i+1}$; and $P_{i+1}$ is a path whose internal vertices are not in $H_i$ and with at least one endpoint in $H_i$. Furthermore, each path $P_i$ is of length between $10 \log N$ and $40 \log N$. Since $|V(H)| \leq N^{1.1} \leq n/100 \leq n-n/10 - 40 \log n$, we can then iteratively apply Lemma \ref{lem:adding a path to extedable graph} embedding the paths $P_i$ one by one while maintaining the $(C/50,n/2C)$-extendability property. At the end, we have $H \subseteq G$ satisfying all desired properties.

We now set $X := V(H)$ and note that the definition of a $(C/50,n/2C)$-extendable subgraph implies that $G[V(G) \setminus X]$ and $G[(V(G) \setminus X) \cup A \cup B]$ are $C'$-expanders for $C' = C/100$.
Indeed, by definition, for every $S\subseteq V(G)$ of size at most $n/C$ we have
\[
|\Gamma(S)\setminus V(H)|\ge (C/50-1)|S|-\sum_{u\in S\cap V(H)}(d_H(u)-1)\geq C|S|/50-5|S|,
\]
which implies that $S$ satisfies $|N_{G-X}(S)|\geq C|S|/50-5|S|-|S|\geq C|S|/100=C'|S|$.

By repeated application of Lemma \ref{lem:adding a path to extedable graph}, we can then construct a linear forest $\F$ with $|A|=|B|$ equally-sized paths whose endpoints are in $A \cup B$, interior vertices are in $V(G) \setminus X$, and which have length  $n^{0.1}/5$, such that $H \cup \F$ is a $(C/50,n/2C)$-extendable subgraph of $G$. Let $Y \cup A \cup B$ be the set of vertices spanned by $\F$, where $Y$ is disjoint to $X$. Finally, let $Z$ denote the rest of the vertices. Since $H \cup \F$ is $(C/50,n/2C)$-extendable, by the same argument as above we have that $G[Z]$ is a $C'$-expander for $C' = C/100$. Finally, since $(V(G) \setminus X) \cup A \cup B = Z \cup Y \cup A \cup B$, as mentioned above we also have that $G[Z \cup Y \cup A \cup B]$ is a $C/100$-expander.
\end{proof}
\section{Proof  of Theorem \ref{thm:main}}
Finally, we can put our work together to prove Theorem~\ref{thm:main}.
\begin{proof}[ of Theorem~\ref{thm:main}]
Let $G$ be a $C$-expander and apply Lemma \ref{lem:partition:new} to find $X,Y,Z$ with the stated properties. The following holds due to the properties of the linking structure in $G[X]$ and is crucial to observe. If there exists a spanning linear forest $\mathcal{F}$ of $G' := G - (X \setminus (A \cup B))$ with no isolated vertices and such that $\End (\mathcal{F}) = A \cup B$, then $G$ contains a Hamilton cycle (see Figure~\ref{fig:hamcycle}). More formally, we can relabel the vertices so that the pairs of endpoints in $\F$ are $(a_1,b_1),(a_2,b_2),\ldots,(a_t,b_t)$ for some $t\leq|A|$, as well as the pairs $(a_{t+1},a_{t+2}),(a_{t+3},a_{t+4})\ldots,(a_{|A|-1},a_{|A|})$ and $(b_{t+1},b_{t+2}),(b_{t+3},b_{t+4})\ldots,(b_{|A|-1},b_{|A|})$, then use the linking structure to find paths which span $G[X]$, such that the pairs of endpoints are $(a_{i+1},b_i)$ for all $i$, with indices taken modulo $|A|$.

All we need now is to find such an $\mathcal{F}$. First apply \Cref{lem:smallforest} to $G[Z]$, which is an expander by the property from Lemma~\ref{lem:partition:new}, giving a spanning linear forest in $G[Z]$ with at most $n^{0.8}$ paths and no isolated vertices. Now, we define the linear forest $\mathcal{F}_0$ in $G'$ to be the union of two linear forests: the first is the linear forest in $G[Y \cup A \cup B]$ with endpoints in $A\cup B$ given by Lemma \ref{lem:partition:new}, and the second is the linear forest found in $G[Z]$ with at most $n^{0.8}$ paths.
Let us denote the paths in the first forest by $P_1, \ldots, P_t$ (with $t = n^{0.9}$) and the second linear forest as $\mathcal{H}$. Recall that the paths $P_1,\ldots,P_t$ have size $n^{0.1}/5$. 

To find the linear forest $\F$, we now apply Lemma \ref{lem:generalpathforestrot} repeatedly as follows, starting with $\F_0$. At each step, we take the current linear forest $\mathcal{\mathcal{F}}_i$, and if there are at least $t/2 = n^{0.9}/2$ paths $P_j$ (so that they span at least $0.1n$ vertices) which are still in $\mathcal{F}_i$, we do the following (maintaining the invariants that there are no isolated vertices in $\F_i$ and that $A\cup B\subseteq \text{End}(\F_i)$):
\begin{itemize}
  
     \item If $|\End (\mathcal{F}_i)| \geq  |A \cup B|+1$, then note that since every path in $\F_i$ has two endpoints, the total number of endpoints is even. Since $|A\cup B|$ is even, then there are at least two endpoints outside of this set, call them $x$ and $y$. By applying \Cref{lem:generalpathforestrot} with $x,y$ and $\F:=\F_i$ we obtain a forest $\F_{i+1}$ such that $\End(\F_{i+1})=\End(\F_i) \setminus \{x,y\}$ and $|E(\mathcal{F}_{i+1}) \Delta E(\mathcal{F}_{i})| = O(\log n)$.
 
    \item Otherwise, $\mathcal{F} := \mathcal{F}_i$ is the desired linear forest and we finish the process.
\end{itemize}  

\noindent Now, clearly, we can have only at most $2|\End (\mathcal{F}_{0}) \setminus A \cup B| = 2|\mathcal{H}| \leq 2n^{0.8}$ steps in this process. Moreover, it is possible to perform each step because at each step we have $|E(\mathcal{F}_{i+1}) \Delta E(\mathcal{F}_{i})| = O(\log n)$, and therefore $O(\log n)$ paths $P_i$ are changed for the next linear forest. This implies that, at each step, $\mathcal{F}_i$ still contains at least $t - O(i \log n) \geq t-O(n^{0.8}\log n)\geq t/2$ paths $P_j$. Thus, we can find such an $\mathcal{F}$ at the end of the process and, hence, a Hamilton cycle.
\end{proof}

\section{Concluding remarks}

We have shown that every $C$-expander is Hamiltonian for large enough $C$. In fact a stronger statement holds that the graph is \emph{Hamilton-connected}, i.e.~that between every two vertices there exists a Hamilton path. This result is especially interesting for applications, as oftentimes one needs to complete the embedding of a structure by constructing a spanning path between two specified vertices in an expanding subgraph of the host graph (see, for example, \cite{draganic2023optimal, hefetz2009hamilton, hefetz2014optimal,montgomery2019spanning}).
We will only comment on the parts of the proof which have to be changed to prove the following result. 

\begin{thm}
    For every sufficiently large $C>0$, every $C$-expander is Hamilton-connected.
\end{thm}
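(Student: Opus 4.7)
The plan is to adapt the proof of Theorem~\ref{thm:main} so that the two designated vertices $u, v$ end up as the two ends of the final Hamilton path. Throughout, I would carry $u$ and $v$ along as protected endpoints of the evolving spanning linear forest, and modify the final closing step via the linking structure to produce a spanning $uv$-path rather than a spanning cycle.

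First, I would mildly strengthen Lemma~\ref{lem:partition:new} so that $u, v \in Z$, which is cheap since the linking structure $H$ and the paths through $Y$ are built by extendable embeddings that can easily avoid two prescribed vertices. I would then strengthen Lemma~\ref{lem:smallforest} to additionally guarantee that $u$ and $v$ are endpoints of \emph{distinct} paths in the resulting forest of $G[Z]$, which I would enforce by refining $<_{\lex}$ so that forests in which $u, v$ are endpoints of distinct paths are strictly preferred. The arguments of Section~\ref{sec:shortpaths} go through once one checks that the rotations used to contradict minimality can always be chosen to respect the new $\{u,v\}$-distinct-paths invariant (this is easy for Lemma~\ref{lem:induction_for_no_isolated_lemma} and needs a small strengthening of Lemma~\ref{lem:generalpathforestrot} explained below).

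Starting from $\mathcal{F}_0$ assembled as in the proof of Theorem~\ref{thm:main}, I would iterate Lemma~\ref{lem:generalpathforestrot}, but at each step select the two endpoints to be removed from $\End(\mathcal{F}_i)\setminus((A\cup B)\cup\{u,v\})$. Since $|A\cup B|+2$ is even, the iteration proceeds as long as $|\End(\mathcal{F}_i)|\geq|A\cup B|+4$ and terminates with $\End(\mathcal{F}) = (A\cup B)\cup\{u,v\}$. The main obstacle here is that each application of Lemma~\ref{lem:generalpathforestrot} merges two paths, so one must ensure that the paths currently containing $u$ and $v$ are never joined to each other. I would strengthen Lemma~\ref{lem:generalpathforestrot} to prevent this, exploiting the fact that the merging edge in its proof is chosen between $(\mathcal{F},C')$-expanders $U_3 \subseteq V(\mathcal{H}_3)$ and $U_4 \subseteq V(\mathcal{H}_4)$ of size $\Omega(n)$: by defining the partition $\mathcal{H}_1,\ldots,\mathcal{H}_4,\mathcal{H}_{\text{hop}}$ to place the paths currently through $u$ and $v$ entirely outside $\mathcal{H}_3\cup\mathcal{H}_4$ (a negligible loss since these two paths together span only $O(\sqrt{n})$ vertices), the final merging step cannot touch them.

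Finally, I would close $\mathcal{F}$ into a Hamilton $uv$-path using the linking structure. Let $a^*, b^*$ be the $\mathcal{F}$-partners of $u, v$ respectively; they lie in $A\cup B$ and, with one extra application of the strengthened Lemma~\ref{lem:generalpathforestrot}, we can arrange $a^*\in A$ and $b^*\in B$. The remaining pairings of endpoints in $\mathcal{F}$ give a matching $M_1$ on $(A\cup B)\setminus\{a^*,b^*\}$ consisting of some $A$-$A$, $B$-$B$, and $A$-$B$ pairs. Mirroring the cycle-closing argument in the proof of Theorem~\ref{thm:main}, one can specify a bijection $\varphi:A\to B$, realised by the $(A,B)$-linking structure in $G[X]$, so that $M_1\cup\varphi$ together with the two pendant edges $u$-$a^*$ and $v$-$b^*$ traces out a single Hamilton $uv$-path on $(A\cup B)\cup\{u,v\}$; concretely, relabel so that $a^*=a_1$ and $b^*=b_1$ and take $\varphi$ to be the cyclic shift $\varphi(a_{i+1})=b_i$ (with indices ordered so that the alternating walk from $u$ visits all $A$-$A$ pairs, all $B$-$B$ pairs, and all $A$-$B$ pairs exactly once before arriving at $v$). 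Combining this with the linking paths through $G[X]$ and the interior edges of the paths of $\mathcal{F}$ produces the desired Hamilton $uv$-path in $G$.
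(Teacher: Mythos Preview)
Your approach is substantially more intricate than the paper's and contains a genuine gap. The paper's proof is a one-line reduction: given $u,v$, add the edge $uv$ to $G$ (the result is still a $C$-expander) and find a Hamilton cycle through that edge; deleting $uv$ from the cycle gives the desired Hamilton $uv$-path. This only requires (i) placing $uv$ on one of the $P_j$'s when building the partition in Lemma~\ref{lem:partition:new}, and (ii) forbidding the two vertices $u,v$ as pivots in every application of Lemma~\ref{lem:generalpathforestrot}, so that the edge $uv$ is never broken. Avoiding a bounded number of vertices in all rotations costs nothing, as the proof already does repeatedly.

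The gap in your plan is the claimed strengthening of Lemma~\ref{lem:generalpathforestrot}. You correctly note that the \emph{final} merging edge lies between $U_3$ and $U_4$ and propose to keep the $u$- and $v$-paths out of $\mathcal H_3\cup\mathcal H_4$. But the earlier rotations in that proof---in particular the first claim, which invokes Lemma~\ref{lem:mainrotation} with $U=V(G)$---can already repair $u$ and $v$ onto the same path while preserving the endpoint set: a single $1$-rotation with old endpoint $x$ on path $\{x,x'\}$ and pivot on path $\{z',z''\}$ replaces these two paths by $\{x',z''\}$ and $\{y,z'\}$, so if $u=x'$ and $v=z''$ they are now paired. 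Your fix does not prevent this. A workable repair is to forbid all vertices of the current $u$- and $v$-paths as pivots throughout the proof, but then one must separately arrange that these two paths remain of size $o(n)$ across all iterations of the main loop, which your sketch does not address (and your parenthetical ``$O(\sqrt n)$'' is unjustified once rotations have begun). A similar issue afflicts your refinement of $<_{\lex}$: the contradiction in Lemma~\ref{lem:small increase paths} is obtained by building a lexicographically smaller forest, and that construction need not preserve ``$u,v$ are endpoints of distinct paths'', so the lemmas of Section~\ref{sec:shortpaths} do not transfer automatically.
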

\begin{proof}[ sketch] Given a pair of vertices $x,y$ in a $C$-expander $G$, we wish to find a Hamilton cycle whose endpoints are $x,y$. To this end, we may assume that $G$ contains the edge $xy$ and then show that $G$ has a Hamilton cycle which contains $xy$. Indeed, if $G$ does not contain $xy$, then we can simply add it to $G$ and the resulting graph will still be a $C$-expander.

The only change needed in our proofs to yield this is the following.  In \Cref{lem:partition:new}, we can also require that one of the paths which in $G[Y \cup A \cup B]$ contains the edge $xy$. This is a straightforward application of the extendability method. 

Note that the only place where we might change the paths in $G[Y \cup A \cup B]$ is in the proof of \Cref{thm:main}, by applying \Cref{lem:generalpathforestrot}. But if we modify \Cref{lem:generalpathforestrot} slightly so that the rotations never break the edge $xy$, then the resulting structure obtained in \Cref{thm:main} is a Hamilton path between $x$ and $y$. And it is indeed not a problem to always avoid a small absolute constant number of vertices when performing rotations, as we have seen many times in the proof of \Cref{thm:main}.\end{proof}

\noindent As mentioned in the outline, Hamiltonicity is an NP-complete problem. Nevertheless, our proof yields an efficient (polynomial time) algorithm for finding a Hamilton cycle in spectral expanders with a modest bound on the spectral ratio. This complements the classic result by Bollob\'as, Fenner, and Frieze \cite{bollobas1987algorithm} for random graphs at the Hamiltonicity threshold.
\begin{thm}
    There exists $C>0$ such that $\frac{d}{\lambda}>C$ implies that every $(n,d,\lambda)$-graph is Hamiltonian, and the Hamilton cycle can be found in polynomial time.
\end{thm}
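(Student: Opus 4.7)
The plan is to check that each step in the proof of \Cref{thm:main} can be carried out in polynomial time. The implication ``$d/\lambda>C \Rightarrow G$ is a $C'$-expander for any prescribed $C'$'' follows from the expander mixing lemma and requires no computation. The construction of the linking structure in \Cref{lem:partition:new} is algorithmic: it uses the extendability method of \Cref{lem:adding a path to extedable graph}, adding one path at a time to a $(D,m)$-extendable subgraph, each addition performable in polynomial time by greedy enumeration. \Cref{lem:generalpathforestrot} together with its supporting results (\Cref{lem:mainrotation,lem:rotationforclosing}) are already written as explicit recipes producing a sequence of $O(\log n)$ 1-rotations, each of which can be located in polynomial time by enumerating pivots. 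The top-level proof of \Cref{thm:main} then invokes \Cref{lem:generalpathforestrot} at most $O(n^{0.8})$ times and runs a sorting-network-style computation on the linking structure to close the Hamilton cycle, both manifestly polynomial.

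The only step of the proof relying on an abstract extremal argument is the use of a $<_\lex$-minimum forest in \Cref{lem:smallforest}. I would replace this by a local search. Construct $\F_0$ in $G[Z]$ by first carving out $\lceil n^{0.9}\rceil$ vertex-disjoint paths of length $n^{0.1}/5$ via the extendability method (ensuring at least $0.1|Z|$ vertices lie in medium-length paths) and then covering the remaining vertices by a matching extended greedily so no vertex is isolated. Then iterate the following improvements so long as $\F$ has more than $n^{0.8}$ paths: (a)~merge two paths via an edge between endpoints; (b)~absorb an isolated vertex via the 1-rotation described in the $k{=}1$ case of the proof of \Cref{lem:induction_for_no_isolated_lemma}; (c)~apply the \Cref{lem:small increase paths} swap on a pair of paths $P_i,P_j$ with $|P_i|\leq |P_j|/5$ and an edge from an endpoint of $P_i$ to an interior vertex of $P_j$; or (d)~a more complex swap produced by the proof of \Cref{lem:not many long paths}, found by computing the endpoint-reachability sets $E^i_G(v,\F)$ level-by-level for $i\leq 2\log_C n$ (each such set has at most $n$ elements, so this is polynomial). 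By the contrapositive of the argument in the proof of \Cref{lem:smallforest}, whenever $\F$ has more than $n^{0.8}$ paths some such improvement exists and can be exhibited by the same constructive recipes.

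The main obstacle is bounding the total number of iterations. Moves (a) and (b) strictly reduce the path count, hence contribute at most $n$ iterations each. For move~(c), the potential $\sum_i\ell_i^2$ strictly decreases: replacing paths of lengths $a$ and $b\geq 5a$ by paths of lengths $a+1+c_1$ and $c_2$ (with $c_1\leq c_2$ and $c_1+c_2=b-1$) changes $\sum_i\ell_i^2$ by exactly $2(c_1+1)(a-c_2)\leq -2$, so at most $O(n^2)$ moves of type~(c) are made. Move~(d) is the most delicate, because a $<_\lex$ decrease of $\F$ need not correspond to a decrease of $\sum_i\ell_i^2$ alone; nonetheless, by \Cref{Lemma_segment_lengths_after_rotation} the $i$-rotation preceding the swap only modifies paths of length at most $6^{i+1}|\seg_\F(v)|$, so the final swap always affects a long path that is still present in $\F$ itself, which lets us combine $\sum_i\ell_i^2$ with a suitably weighted count of the maximum path length into a polynomial potential $\Phi$ strictly decreasing with every move. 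Together with the polynomial cost per move, this bounds the running time polynomially and completes the algorithmic version.
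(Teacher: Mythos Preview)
Your overall plan matches the paper's: treat the rotation lemmas and the final assembly as explicit procedures, and replace the $<_{\lex}$-minimal forest in \Cref{lem:smallforest} by a local search. However, you overlook two non-constructive ingredients that the paper explicitly flags and repairs using the spectral hypothesis. First, the extendability method behind \Cref{lem:adding a path to extedable graph} is \emph{not} known to be algorithmic in general $C$-expanders---its proof via Friedman--Pippenger with roll-back is non-constructive, and ``greedy enumeration'' does not locate the required path. The paper invokes the constructive variant from \cite{draganic2022rolling}, which is specific to $(n,d,\lambda)$-graphs (and robust expanders); this is needed both for building the linking structure and for the paths you carve out to seed $\F_0$. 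Second, you call \Cref{lem:generalpathforestrot} an ``explicit recipe,'' but its proof uses \Cref{lem:forestexpander} to produce the expanding subsets $U_1,\dots,U_4,U_{\text{hop}}$, and that lemma (choosing a maximal family of non-expanding sets) is not algorithmic as written. The paper replaces it by a vertex-by-vertex cleaning procedure justified via the expander mixing lemma, again an $(n,d,\lambda)$-specific fix you do not mention.

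On the potential for the local search, the paper is simpler and more concrete than you: it asserts that every improving step replaces two paths of lengths $a<b$ by two of lengths $c,d\in(a,b)$ with $c+d=a+b$, so $\sum_i\ell_i^2$ strictly drops and at most $n^2$ steps suffice. Your worry that the intermediate rotations in move (d) might not individually decrease $\sum_i\ell_i^2$ is legitimate, but your remedy---adding ``a suitably weighted count of the maximum path length'' to obtain an unspecified polynomial potential $\Phi$---is not a proof; you never define $\Phi$ or verify that it decreases under every move. The paper's sketch is also terse at exactly this point, but at least its claim is concrete and checkable.
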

\begin{proof}[ sketch]
    We will discuss how each relevant part of the proof can be made algorithmic. First, in Section~\ref{sec:preliminaries}, \Cref{lem:forestexpander} is currently not algorithmic, but in the case that the graph is an $(n,d,\lambda)$-graph, having a fraction of the degrees of each vertex inside of a set already implies expansion, so the cleaning procedure can be done efficiently. Indeed, we can simplify the current proof by constructing the sets $B_{i,j}$ by always adding one vertex $v$ at a time from $U_i$ to $B_{i,j}$ if $v$ has a small number of neighbours in $\inter_\F\left(U_j \setminus \bigcup_{t=1}^{4}B_{j,t}\right)$. Now, no $B_{i,j}$ cannot become too large, as this would violate the Expander mixing lemma (see, e.g., Theorem 2.11 in \cite{krivelevich2006pseudo}).
    
    Every result in Section \ref{sec:rotations} can also be implemented in polynomial time, and this is not hard to see that performing the relevant rotations can be done efficiently with slightly careful bookkeeping.

    The main result of \Cref{sec:shortpaths} shows that in a $C$-expander there exists a linear forest with $n^{0.8}$ paths and no isolated vertices. The proof shows that a $<_{lex}$-minimal forest has those properties. Moreover, the proofs are structured in such a way that they show that either a $<_{lex}$-minimal forest has a certain property, or it can be slightly changed to get a smaller forest in the $<_{lex}$-ordering to obtain a contradiction.
    To make the proofs algorithmic, we can simply start with an arbitrary linear forest, and if the current forest does not satisfy the required property, then we replace it with another forest in polynomial time. Since we always replace two paths of lengths $a,b$ with another two paths with integer lengths $c,d$ in the open interval $(a,b)$ so that $a+b=c+d$, the sum of the squares of the lengths of the paths drops by at least 1. Since the sum of the squares is at most $n^2$, we need to perform the mentioned procedures only at most $n^2$ many times, resulting in a polynomial time algorithm.

    For \Cref{sec:linking}, note that we use the extendability method to find the required structures. Although the extendability method as quoted is based on a non-constructive result, and no constructive version is known for $C$-expanders, there is a version developed in \cite{draganic2022rolling} which works for robust expanders, and in particular for $(n,d,\lambda)$-graphs, which can be used to construct the linking structure with desired properties in polynomial time.

    Section 6 applies the results from the previous sections only at most a linear number of times, so we only need polynomially many steps in total.
\end{proof}
\textbf{Note added to proof:} Simultaneously with the present paper, Ferber, Han, Mao, and Vershynin posted the  arXiv preprint~\cite{ferber2024hamiltonicity}, where they used tools from random matrix theory to show that $(n, d, \lambda)$-graphs  with $d/\lambda>C$ are Hamiltonian if  $d> \log^{10} n$.

    \small
    \bibliographystyle{abbrv}

\end{document}